\newtheorem{theorem}{Theorem}
\newtheorem{lemma}{Lemma}
\newtheorem{proposition}{Proposition}
\newtheorem{corollary}{Corollary}
\newtheorem{definition}{Definition}
\newtheorem{remark}{Remark}
\newcommand{\X}{\mathcal{X}}
\newcommand{\Y}{\mathcal{Y}}
\newcommand{\R}{\mathbb{R}}
\newcommand{\E}{\mathbb{E}}
\newcommand{\ep}{\varepsilon}
\newcommand{\Leb}{\mathrm{Leb}}
\newcommand{\IS}{\mathrm{IS}}
\begin{document}

\title{The deficit in the Gaussian Log-Sobolev inequality and inverse Santal\'o inequalities}

\begin{abstract}
We establish dual equivalent forms involving relative entropy, Fisher information and optimal transport costs of inverse Santal\'o inequalities. We show in particular that the Mahler conjecture is equivalent to some dimensional lower bound on the deficit in the Gaussian logarithmic Sobolev inequality. We also derive from existing results on inverse Santal\'o inequalities some sharp lower bounds on the deficit in the Gaussian logarithmic Sobolev inequality. Our proofs rely on duality relations between convex functionals (introduced in \cite{CEK15} and \cite{San16}) related to the notion of moment measure.
\end{abstract}

\author{Nathael Gozlan}

\date{\today}

\thanks{The author is supported by a grant of the Simone and Cino Del Duca Foundation. This research has been conducted within the FP2M federation (CNRS FR 2036)}

\address{NG : Université de Paris, CNRS, MAP5 UMR 8145, F-75006 Paris, France}
\email{nathael.gozlan@u-paris.fr}

\keywords{Santal\'o Inequality ; Mahler Conjecture ; Logarithmic Sobolev Inequality ; Moment Measures ; Optimal Transport}
\subjclass{(MSC 2020)  49N15 ; 49Q25 ; 52A20; 52A40 ; 60E15}

\maketitle

\maketitle

\section{Introduction}
The aim of this paper is to highlight some new connections between reverse forms of the Santal\'o inequality and some improved versions of the Gaussian logarithmic Sobolev inequality.  In particular, the celebrated Mahler conjecture is shown to be equivalent to some dimensional lower bound on the deficit in the logarithmic Sobolev inequality for the standard Gaussian measure. 

Recall the classical Santal\'o inequality \cite{San49}: if $K \subset \R^n$ is a convex body and 
\[
K^{\circ,z}:= \{ y  \in \R^n : (x-z) \cdot (y-z) \leq 1,\forall x \in K\}
\]
denotes its polar with respect to the point $z\in \R^n$ (simply denoted $K^\circ$ if $z=0$), then
\begin{equation}\label{eq:santalo-set}
P(K):= \inf_{z\in \R^n}\mathrm{Vol}(K)\mathrm{Vol}(K^{\circ,z}) \leq P(B_2^n),
\end{equation}
where $\mathrm{Vol}$ denotes the Lebesgue measure on $\R^n$ and, for any $p\geq 1$, $B_p^n = \{x\in \R^n : \sum_{i=1}^n |x_i|^p \leq 1\}$ denotes the $\ell_p$ unit ball of $\R^n.$
When $K$ is centrally symmetric, then the infimum in $P(K)$ is attained for $z =0$, and in this case, the Santal\'o inequality reads as follows
\[
\mathrm{Vol}(K)\mathrm{Vol}(K^\circ) \leq \mathrm{Vol}(B_2^n)^2.
\]
The Mahler conjecture \cite{Mahler39} states reverse bounds for $P(K)$, which are the following: 
if $K$ is centrally symmetric, then
\begin{equation}\label{eq:Mahler1}
\mathrm{Vol}(K)\mathrm{Vol}(K^\circ) \geq P(B_1^n) = \mathrm{Vol}(B_1^n)\mathrm{Vol}(B_{\infty}^n) = \frac{4^n}{n!}
\end{equation}
and for a general convex body $K$, 
\begin{equation}\label{eq:Mahler2}
P(K) \geq P(\Delta^n) = \frac{(n+1)^{n+1}}{(n!)^2}
\end{equation}
where $\Delta^n$ is any non-degenerate simplex of $\R^n$. Even if these two conjectures are still open, some progresses have been made in the understanding of this problem and some particular cases have been established. In  \cite{SR81}, Saint-Raymond (see also \cite{Mey86}) showed that \eqref{eq:Mahler1} holds true for unconditional convex bodies, that is to say convex body $K$ satisfying $x =(x_1,\ldots,x_n) \in K \Rightarrow (\ep_1x_1,\ldots,\ep_nx_n) \in K$, for all $\ep =(\ep_1,\ldots,\ep_n) \in \{-1,1\}^n.$ Other particular cases were established in \cite{R86, GMR88, Mey91, BF13, AFZ19}. Recently, Conjecture \eqref{eq:Mahler1} has been established in dimension $n=3$ by Iriyeh and Shibata (see \cite{FHMRPZ19} for an alternative proof).  
Bourgain and Milman \cite{BM87} (see also \cite{Kup08}, \cite{Naz12}, \cite{GPV14} and \cite{Ber20} for alternative proofs) showed that Conjecture \eqref{eq:Mahler2} is asymptotically true: there exists some absolute constant $\alpha>0$ such that for all $n\geq 1$ and all convex body $K\subset \R^n$, it holds 
\begin{equation}\label{eq:BM}
P(K) \geq \alpha^n P(\Delta^n).
\end{equation}
The Mahler conjectures admit functional equivalent versions that were considered in particular by Klartag and Milman \cite{KM05} and by Fradelizi and Meyer \cite{FM08a, FM08b}, that we shall now recall. 

We first need to introduce some notation and definitions that will be useful in all the paper.
We will denote by $\mathcal{F}(\R^n)$ the set of lower semi-continuous functions $f : \R^n \to \R\cup\{+\infty\}$ which are convex and such that $f(x)<+\infty$ for at least one value of $x$. The domain of a convex function $f$ is the convex set $\mathrm{dom}(f) = \{x \in \R^n : f(x) <+\infty\}$.
We recall, that the Fenchel-Legendre transform of $f\in \mathcal{F}(\R^n)$ is the function denoted by $f^*$ and defined by
\begin{equation}\label{eq:FL}
f^*(y) = \sup_{x\in \R^n}\{x\cdot y - f(x)\},\qquad y\in \R^n.
\end{equation}
A function $f:\R^n \to \R\cup\{+\infty\}$ is said unconditional if for any $\ep = (\ep_1,\ldots,\ep_n) \in \{-1,1\}^n$ it holds
\[
f(\ep_1x_1,\ldots,\ep_n x_n) = f(x_1,\ldots,x_n),\qquad \forall x = (x_1,\ldots,x_n) \in \R^n.
\]
We will denote by $\mathcal{F}_u(\R^n)$ the set of all unconditional elements of $\mathcal{F}(\R^n)$ and by $\mathcal{F}_s(\R^n)$ the set of functions $f \in \mathcal{F}(\R^n)$ that are symmetric: $f(-x)=f(x)$, $x\in \R^n$.
Finally, for any convex set $C \subset \R^n$, we will denote by $\chi_C$ the convex characteristic function of $C$ which is the function defined by $\chi_C(x) = 0$ if $x \in C$ and $+\infty$ otherwise. 

\begin{definition}[Functional Inverse Santal\'o Inequalities]\label{def:IS} Let $c>0$ and $n\geq 1$. \begin{itemize}
\item We will say that that the functional inverse Santal\'o inequality $\IS_n(c)$ holds with the constant $c>0$ if for all function $f\in \mathcal{F}(\R^n)$ such that $0<\int e^{-f}\,dx$ and $0<\int e^{-f^*}\,dx$, it holds
\begin{equation}\label{eq:InvS}
\int e^{-f}\,dx \int e^{-f^*}\,dx \geq c^n.
\end{equation}
\item We will say that that the \emph{symmetric} (resp. \emph{unconditional}) functional inverse Santal\'o inequality $\IS_{n,s}(c)$ (resp. $\IS_{n,u}(c)$) holds with the constant $c>0$ if  \eqref{eq:InvS} holds for all function $f \in \mathcal{F}_s(\R^n)$ (resp. $\mathcal{F}_u(\R^n)$) such that  $0<\int e^{-f}\,dx$ and $0<\int e^{-f^*}\,dx$. 
\end{itemize}
\end{definition}

Let us briefly recall how the functional and the convex body versions are related.  Let $K$ be a centrally symmetric convex body and denote by $\|x\|_K = \inf\{r \geq0 : x \in rK\}$, $x\in \R^n$, its gauge. Then an easy calculation shows that $\|\,\cdot\,\|_K^* = \chi_{K^\circ}$. Therefore $\int e^{-\|\,\cdot\,\|_K^*(x)}\,dx = \mathrm{Vol}(K^\circ).$ On the other hand,
\[
\int e^{- \|x\|_K}\,dx = \int_0^{+\infty} e^{-u} \mathrm{Vol} (\{x\in \R^n : \|x\|_K \leq u\})\,du =  \int_0^{+\infty} e^{-u} u^n\,du \mathrm{Vol}(K) = n!\mathrm{Vol}(K).
\]
Therefore, $\mathrm{IS}_{n,s}(4)$ implies \eqref{eq:Mahler1}. Conversely, it is shown in \cite[Proposition 1]{FM08a} that if \eqref{eq:Mahler1} holds for all $n\geq1$, then $\mathrm{IS}_{n,s}(4)$ holds for all $n\geq 1$. Furthermore,  according to \cite[Proposition 1]{FM08a} again, $\IS_n(e)$ holds for all $n\geq 1$ if and only if \eqref{eq:Mahler2} holds for all $n\geq 1$. Similarly, it follows from \eqref{eq:BM} that there exists some absolute constant $c>0$ such that $\IS_n(c)$ holds for all $n\geq 1$ (see \cite{KM05, FM08b}). In addition, Fradelizi and Meyer gave  in \cite{FM08b, FM08a} a direct functional proof of the fact that $\IS_{n,u}(4)$ holds for every $n\geq 1$, which gives back in particular Saint-Raymond's result. They also proved in \cite{FM08a} that $\IS_1(e)$ holds true (see also \cite{FM10}). Note that other special classes of functions are considered in \cite{FM08a,FM08b}.

The goal of this paper is to study dual forms, expressed on the space of probability measures, of the functional inverse Santal\'o inequality $\IS_n(c)$ and its variants. To state our main results, we need to introduce additional notations. We will denote by $\mathcal{P}(\R^n)$ the set of all Borel probability measures on $\R^n$, and by $\mathcal{P}_{k}(\R^n)$, $k\geq 1$, the subset of probability measures having a finite moment of order $k$. A probability measure $\nu \in \mathcal{P}(\R^n)$ realized by a random vector $X=(X_1,\ldots,X_n)$ will be said symmetric if $-X$ has the same law as $X$ and unconditional if $(\ep_1X_1,\ldots,\ep_nX_n)$ has the same law as $X$ for any $\ep \in \{-1,1\}^n.$ 
Finally, if $\nu_1,\nu_2 \in \mathcal{P}_k(\R^n)$, let us denote by $W_k(\nu_1,\nu_2)$ their Kantorovich transport distance of order $k$ (also called Wasserstein distance of order $k$), defined by
\[
W_k^k(\nu_1,\nu_2) = \inf \int |x-y|^k\,\pi(dxdy),
\]
where $|\,\cdot\,|$ denotes the standard Euclidean norm on $\R^n$ and where the infimum runs over the set of all transport plans $\pi$ between $\nu_1$ and $\nu_2$, that is to say the set of probability measures $\pi$ on $\R^n\times \R^n$ having $\nu_1$ and $\nu_2$ as marginals.

According to a celebrated result of Gross \cite{Gro75}, the standard Gaussian measure 
\[
\gamma_n (dx)= \frac{1}{(2\pi)^{n/2}} e^{-\frac{|x|^2}{2}}\,dx
\] 
on $\R^n$ satisfies the logarithmic Sobolev inequality: for all $\eta \in \mathcal{P}(\R^n)$ absolutely continuous with respect to $\gamma_n$,
\[
H(\eta |\gamma_n) \leq \frac{1}{2} I(\eta| \gamma_n), \qquad \forall \eta \in \mathcal{P}(\R^n),
\]
where, for any probability measure of the form $d\eta = h\,d\gamma_n$, the relative entropy $H(\eta |\gamma_n)$ of $\eta$ with respect to $\gamma_n$ is defined by 
\[
H(\eta | \gamma_n) = \int \log h \, d\eta.
\]
To define the Fisher information $I(\,\cdot\,| \gamma_n)$, we need to introduce additional material. We will say that a function $f:\R^n \to \R$ is absolutely continuous on almost every line parallel to an axis, it for every $i \in \{1,\ldots, n\}$ and Lebesgue almost every $(x_1,\ldots, x_{i-1},x_{i+1},\ldots, x_n) \in \R^{n-1}$, the function 
\[
t\mapsto f(x_1,\ldots, x_{i-1},t,x_{i+1},\ldots, x_n)
\]
is absolutely continuous on every segment. When $f$ satisfies this condition, its partial derivatives $\frac{\partial f}{\partial x_i}$, $i\in \{1,\ldots,n\}$, are defined Lebesgue almost everywhere. 
The Fisher information $I(\eta|\gamma_n)$ of a probability measure $d\eta=h\,d\gamma_n$ with respect to $\gamma_n$ is then defined by
\[
I(\eta| \gamma_n) = 4\int |\nabla (h^{1/2})|^2\,d\gamma_n,
\]
whenever $h^{1/2}$ is absolutely continuous on almost every line parallel to an axis, and $+\infty$ otherwise. It follows from \cite[Proposition 1.5.2]{Bog} and \cite[Chapter 1, Theorems 1 and 2]{Maz}, that $I(\eta| \gamma_n)<\infty$ if and only if $h^{1/2} \in W^{1,2}(\gamma_n)$ (the subspace of $L^2(\gamma_n)$ consisting of functions $f$ whose weak derivative is also in $L^2(\gamma_n)$), but we will not make reference to this space $W^{1,2}(\gamma_n)$ anymore in the paper.

\begin{remark}
If $h^{1/2}$ admits partial derivatives almost everywhere, the following quantity 
\begin{equation}\label{eq:tildeI}
\tilde{I}(\eta|\gamma_n) = 4\int |\nabla (h^{1/2})|^2\,d\gamma_n
\end{equation}
makes sense in $[0,\infty]$ and is such that $\tilde{I}(\eta|\gamma_n) \leq I(\eta|\gamma_n)$. Note however that the logarithmic Sobolev inequality is not always true if one replaces $I(\,\cdot\,|\gamma_n)$ by $\tilde{I}(\,\cdot\,|\gamma_n)$. Indeed, if for instance $d\eta=\frac{\mathbf{1}_{B}}{\gamma_n(B)}\,d\gamma_n$, where $B$ is (say) the Euclidean unit ball, then $0=\tilde{I}(\eta |\gamma_n)< I(\eta|\gamma_n)=+\infty$ whereas, $H(\eta |\gamma_n) = - \log \gamma_n(B) >0$.
\end{remark}

The deficit in the Gaussian logarithmic Sobolev inequality is the non-negative function $\delta_n$ defined by
\[
\delta_n (\eta) = \frac{1}{2} I(\eta| \gamma_n )- H(\eta |\gamma_n),
\]
for all $d\eta =h\,d\gamma_n$, such that $H(\eta |\gamma_n)<+\infty$.  Recently, bounding from below the function $\delta_n$ attracted a lot of attention. We refer to \cite{FMP13, IM14, BGRS14, DT16, FIL16, LNP17, CE17, BGG18, IK18, ELS19} and the references therein for some recent progresses regarding this question. 
The following theorem, which is one of our main results, shows in particular that the Mahler conjecture is equivalent to some particular bound on  $\delta_n$. 
\begin{theorem}\label{thm:mainresult}
Let $c>0$ and $n\geq 1$. The inverse functional Santal\'o inequality $\IS_{n}(c)$ holds if and only if for all log-concave probability measures $\eta_1,\eta_2$ on $\R^n$ such that, for $i=1,2$, $d\eta_i = e^{-V_i}\,dx$ for some \emph{essentially continuous} $V_i \in \mathcal{F}(\R^n)$, it holds
\begin{equation}\label{eq:mainresult}
H(\eta_1 |\gamma_n)+ H(\eta_2 |\gamma_n) + \frac{1}{2}W_2^2(\nu_1,\nu_2) \leq \frac{1}{2} I(\eta_1| \gamma_n)+ \frac{1}{2} I(\eta_2| \gamma_n)+  n\log (2\pi/ c),
\end{equation}
as soon as $\nu_1,\nu_2 \in \mathcal{P}_2(\R^n)$, where, for $i=1,2$, $\nu_i = \nabla (V_i)_\# \eta_i$ is the moment probability measure of $\eta_i$.
Equivalently
\[
\delta_n(\eta_1) + \delta_n(\eta_2) \geq  \frac{1}{2}W_2^2(\nu_1,\nu_2)-n\log (2\pi/ c)
\]
or 
\[
\delta_{2n} (\eta_1 \otimes \eta_2) \geq  \frac{1}{2}W_2^2(\nu_1,\nu_2)-n\log (2\pi/ c).
\]
The same statement holds for $\IS_{n,s}(c)$ (resp. $\IS_{n,u}(c)$) with the extra condition that $\eta_1,\eta_2$ are symmetric (resp. unconditional).
\end{theorem}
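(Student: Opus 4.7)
The plan is to reformulate \eqref{eq:mainresult} via a pointwise identity for $\delta_n$ combined with Kantorovich duality for $W_2^2$, and then to exploit the moment measure variational principle of \cite{CEK15,San16}. For a log-concave probability $d\eta = e^{-V}\,dx$, a direct computation gives $I(\eta|\gamma_n) = \int |x-\nabla V|^2\,d\eta$ and $H(\eta|\gamma_n) = \tfrac{n}{2}\log(2\pi) + \tfrac{1}{2}\int|x|^2\,d\eta - \int V\,d\eta$. Combining these with the integration by parts identities $\int x\cdot\nabla V\,d\eta = n$ and $\int V\,d\eta + \int V^*\,d\nu = n$ (the latter via the Legendre equality $V^*(\nabla V(x)) = x\cdot\nabla V(x) - V(x)$) produces the key identity
\[
\delta_n(\eta) = \int\!\left(\tfrac{|y|^2}{2} - V^*(y)\right) d\nu(y) - \tfrac{n}{2}\log(2\pi).
\]
Together with the Kantorovich dual formula
\[
\tfrac{1}{2}W_2^2(\nu_1,\nu_2) = \tfrac{1}{2}\!\int\!|y|^2\,d\nu_1 + \tfrac{1}{2}\!\int\!|y|^2\,d\nu_2 - \inf_{\phi\text{ convex}}\left\{\int \phi\,d\nu_1 + \int \phi^*\,d\nu_2\right\},
\]
one sees that \eqref{eq:mainresult} is equivalent to the statement that for every convex $\phi$,
\[
\int (\phi - V_1^*)\,d\nu_1 + \int (\phi^* - V_2^*)\,d\nu_2 \geq n\log c.
\]

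For the forward implication, I invoke the moment measure variational principle of Cordero-Erausquin, Klartag and Santambrogio: for the moment measure $\nu$ of a log-concave probability $e^{-V}\,dx$, the function $V^*$ minimizes $\psi \mapsto \int \psi\,d\nu - \log \int e^{-\psi^*}\,dx$ over convex $\psi$, with minimum value $\int V^*\,d\nu$ (using $V^{**}=V$ and $\int e^{-V}\,dx = 1$). Equivalently, $\int (\psi - V^*)\,d\nu \geq \log \int e^{-\psi^*}\,dx$ for every convex $\psi$. Applying this with $(\nu,V,\psi) = (\nu_1, V_1, \phi)$ and with $(\nu_2, V_2, \phi^*)$, and summing, one obtains
\[
\int(\phi - V_1^*)\,d\nu_1 + \int(\phi^* - V_2^*)\,d\nu_2 \geq \log\!\int\! e^{-\phi^*}\,dx + \log\!\int\! e^{-\phi}\,dx \geq n\log c
\]
by $\IS_n(c)$ applied to $\phi$, which proves the equivalent form of \eqref{eq:mainresult} above.

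Conversely, given $f \in \mathcal{F}(\R^n)$ with $a := \log \int e^{-f}\,dx$ and $b := \log \int e^{-f^*}\,dx$ both finite, set $\eta_1 := e^{-(f+a)}\,dx$ and $\eta_2 := e^{-(f^*+b)}\,dx$: these are log-concave probability measures with potentials $V_1 = f+a$ and $V_2 = f^*+b$, hence $V_1^* = f^*-a$ and $V_2^* = f-b$. Applying the reformulated inequality to $\phi = V_1^*$ (for which $\phi^* = V_1$), the first integral vanishes identically, the second equals $\int(V_1 - V_2^*)\,d\nu_2 = a+b$, and we conclude $a+b \geq n\log c$, i.e., $\IS_n(c)$. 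The symmetric and unconditional variants go through verbatim: moment measures inherit the symmetry of $V_i$, the Brenier--Kantorovich optimizer $\phi$ can be chosen with the same symmetry by uniqueness, and $\eta_1,\eta_2$ inherit the symmetry of $f$. The main technical obstacle is justifying the integration by parts in the key identity, for which the essential continuity assumption on $V_i$ is used; in the converse direction, one may also need to approximate $f$ by more regular functions in order to ensure $\nu_1,\nu_2\in\mathcal{P}_2(\R^n)$ and then pass to the limit.
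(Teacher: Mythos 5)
Your argument is correct in substance and lands on the same intermediate statement as the paper, namely item (iii) of Corollary \ref{cor:IS-T2}: $\int V_1^*\,d\nu_1+\int V_2^*\,d\nu_2\le -n\log c+\mathcal{T}(\nu_1,\nu_2)$. Your identity $\delta_n(\eta)=\int(\tfrac{|y|^2}{2}-V^*)\,d\nu-\tfrac n2\log(2\pi)$ is a clean compression of the paper's bookkeeping (the formulas \eqref{eq:TW1}--\eqref{eq:HLG} together with the Legendre computation in the proof of Corollary \ref{cor:IS-T2}); note that it only requires the pointwise Young equality $V^*(\nabla V(x))=x\cdot\nabla V(x)-V(x)$, not $\int x\cdot\nabla V\,d\eta=n$ (which the paper justifies only for finite-valued $V$), and that the Fisher information you compute is the a.e.-gradient quantity $\tilde I$ of \eqref{eq:tildeI}, which equals $I$ precisely because of essential continuity (Lemma \ref{lem:ac}); using $\tilde I\le I$ is harmless in the forward direction but is exactly the point to check in the converse. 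The forward implication is essentially the paper's: both rest on the Cordero-Erausquin--Klartag/Santambrogio variational characterization of moment measures. The genuine difference is your converse: by taking $\eta_1\propto e^{-f}$, $\eta_2\propto e^{-f^*}$ and testing with $\phi=V_1^*$ you bypass the paper's two heaviest ingredients, the reverse duality Theorem \ref{thm:dualLeb} and the existence part of Theorem \ref{thm:CEKS}, at the cost of a regularization of $f$. That is a real economy, and it is a legitimate alternative route.

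Two points still need to be filled in. In the forward direction, before invoking $\IS_n(c)$ you must know that $\int e^{-\phi}\,dx>0$ and $\int e^{-\phi^*}\,dx>0$ for the Kantorovich competitors $\phi$ you use; this is not automatic, but follows as in the paper's proof of Theorem \ref{thm:IS-T1} from the fact that $\nu_1,\nu_2$, being moment measures, are centered with supports not contained in a hyperplane, so that $\phi\in L^1(\nu_1)$ and $\phi^*\in L^1(\nu_2)$ force both functions to be finite on open sets; in the $\IS_{n,s}$ and $\IS_{n,u}$ cases you must also symmetrize $\phi$ (restricting the Kantorovich infimum to symmetric or unconditional test functions, which does not change its value when $\nu_1,\nu_2$ have that symmetry), since those hypotheses only control symmetric or unconditional $f$. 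In the converse, the approximation you defer is where the remaining work lies: for a general admissible $f$ the potentials $f+a$ and $f^*+b$ need not be essentially continuous and $\nu_1,\nu_2$ need not lie in $\mathcal{P}_2(\R^n)$, so the hypothesis of the theorem cannot be applied directly. A concrete choice such as $f_k=f\,\square\,(k|\cdot|)+\tfrac1{2k}|\cdot|^2$, whose conjugate is $(f^*+\chi_{B_k})\,\square\,(\tfrac k2|\cdot|^2)$, makes both potentials finite-valued (hence continuous, and with $I=\tilde I$ for the associated measures) with moment measures having finite second moments, and Lemma \ref{lem:infconv} together with dominated convergence gives $\int e^{-f_k}\,dx\to\int e^{-f}\,dx$ and $\int e^{-f_k^*}\,dx\to\int e^{-f^*}\,dx$. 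With such a regularization spelled out (it also preserves symmetry and unconditionality), your converse is complete.
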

Before commenting this result, we need to clarify some notions used in the statement above:
\begin{itemize}
\item An absolutely continuous measure $m$ (not necessarily finite) is said log-concave if $dm = e^{-V}\,dx$ for some $V: \R^n \to \R \cup\{+\infty\}$ convex (in this paper we don't consider log-concave measures supported on affine subspaces of dimension smaller than $n$).
\item A function $V\in \mathcal{F}(\R^n)$ is said to be essentially continuous if the set of points where it is discontinuous (as a function taking values in $\R\cup\{\infty\}$) is negligible for the Hausdorff measure $\mathcal{H}_{n-1}$. Equivalently, $V$ is essentially continuous if letting $D = \mathrm{dom}(V)$
\[
\mathcal{H}_{n-1}\left(\{ x \in \partial D : V(x) <\infty \}\right) = 0.
\]
Note in particular that in dimension $1$, a function $V\in \mathcal{F}(\R)$ is essentially continuous if and only if it is continuous as a function taking values in $\R \cup \{+\infty\}$. 
\item If $V \in \mathcal{F}(\R^n)$ is such that $0<\int e^{-V}<+\infty$, the moment measure of $V$ is the probability measure $\nu$ defined as the push forward of the probability measure $d\eta = \frac{e^{-V}}{\int e^{-V(y)}\,dy}dx$ under the map $\nabla V$.  By extension, we also say that $\nu$ is the moment measure of $\eta$. 
\item As explained in Remark \ref{rem:ac} below, if a probability measure is of the form $d\eta = e^{-V}\,dx$, with an  essentially continuous $V \in \mathcal{F}(\R^n)$, then its density $h$ with respect to $\gamma_n$ is such that $h^{1/2}$ is absolutely continuous on almost every line parallel to an axis. Note, for instance, that uniform distributions on convex bodies are never in this class.
\end{itemize}

According to the functional version of the Bourgain-Milman theorem established in \cite{KM05} and \cite{FM08b}, the inequality $\IS_n(c)$ holds true for some constant $c>0$ independent on $n$. We immediately conclude from this that for the same constant $c>0$ it holds for all $n\geq 1$
\begin{equation}\label{eq:deficitbound}
\delta_{2n} (\eta_1 \otimes \eta_2) \geq  \frac{1}{2}W_2^2(\nu_1,\nu_2)-n\log (2\pi/ c), 
\end{equation}
whenever $\eta_1,\eta_2$ are log-concave probability measures with an essentially continuous minus log density (and $\nu_1,\nu_2$ are the associated moment measures).
In dimension $1$, this result can be refined. Indeed, as we mentioned above, Fradelizi and Meyer \cite{FM08a} proved that $\IS_1(e)$ holds true. We thus derive from their result that \eqref{eq:deficitbound} holds true for $n=1$ and $c=e$. The following result shows that this bound on $\delta_2$ is sharp:
\begin{corollary}\label{cor:FMd1}
For all log-concave probability measures $\eta_1,\eta_2$ on $\R$ such that, for $i=1,2$, $d\eta_i = e^{-V_i}\,dx$ for some continuous convex function $V_i:\R\to \R\cup \{+\infty\}$, it holds
\[
\delta_{2} (\eta_1 \otimes \eta_2) \geq  \frac{1}{2}W_2^2(\nu_1,\nu_2)-\log (2\pi/ e),
\]
where, for $i=1,2$, $\nu_i = \nabla (V_i)_\# \eta_i$ is the moment probability measure of $\eta_i$.
This bound is equivalent to the functional inverse Santal\'o inequality $\IS_1(e)$. Moreover, there exist sequences of log-concave probability measures $(\eta_1^k)_{k\geq 1}$ and $(\eta_2^k)_{k\geq 1}$ with continuous densities as above (and with associated moment measures denoted by $\nu_1^k$, $\nu_2^k$, $k\geq 1$) such that
\[
\delta_{2} (\eta_1^k \otimes \eta_2^k) -  \frac{1}{2}W_2^2(\nu_1^k,\nu_2^k)+\log (2\pi/ e) \to 0
\]
as $k \to \infty$.
\end{corollary}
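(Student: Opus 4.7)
The inequality and its equivalence with $\IS_1(e)$ are immediate applications of Theorem~\ref{thm:mainresult} with $n=1$ and $c=e$: in dimension one an essentially continuous $V\in\mathcal{F}(\R)$ is just a continuous $V:\R\to\R\cup\{+\infty\}$, and Fradelizi--Meyer's theorem \cite{FM08a} supplies $\IS_1(e)$. Only the sharpness statement requires real work, and the plan is to exhibit an asymptotic extremizer of $\IS_1(e)$ in the continuous class and transfer its extremality through the equivalence in Theorem~\ref{thm:mainresult}.

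A natural candidate comes from the affine family $U_k(x) = x + \chi_{[-1,k]}(x)$, $k \geq 1$. A direct calculation yields $U_k^*(y) = 1-y$ for $y\le 1$ and $U_k^*(y) = k(y-1)$ for $y\ge 1$, together with
\[
\Bigl(\int e^{-U_k}\,dx\Bigr)\Bigl(\int e^{-U_k^*}\,dy\Bigr) = (e-e^{-k})\Bigl(1+\tfrac{1}{k}\Bigr) \xrightarrow[k\to\infty]{} e,
\]
so the $U_k$ asymptotically saturate $\IS_1(e)$. After the normalizations $V_1^k = U_k + \log(e - e^{-k})$ and $V_2^k = U_k^* + \log(1 + 1/k)$, the formal $k\to\infty$ limits are a shifted exponential on $[-1,\infty)$ and its mirror image on $(-\infty,1]$, with moment measures $\nu_1^\infty=\delta_1$ and $\nu_2^\infty=\delta_{-1}$. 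A short computation of entropies and Fisher informations (using $I(\eta|\gamma_1)=\E[(X-V'(X))^2]$ and $H(\eta|\gamma_1)=-\E V(X)+\frac12\E X^2+\frac12\log(2\pi)$) then gives in the limit $\delta_1(\eta_1^\infty) + \delta_1(\eta_2^\infty) = 3 - \log(2\pi) = \tfrac12 W_2^2(\delta_1,\delta_{-1}) - \log(2\pi/e)$, which is exactly the equality case of the deficit inequality.

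Because the $U_k$ are not continuous on $\R$, the remaining task is to regularize them while preserving this asymptotic saturation. I would replace the hard indicator by a logarithmic barrier, e.g.\ $\widetilde U_k(x) = x - \varepsilon_k\log\bigl((x+1)(k-x)\bigr)$ on $(-1,k)$, set to $+\infty$ outside, with $\varepsilon_k\to 0$; this function is continuous as a map $\R\to\R\cup\{+\infty\}$, smooth on its domain, and has a continuous Legendre transform obtained by smoothing the corner of $U_k^*$ at $y=1$. Normalizing $\widetilde U_k$ and $\widetilde U_k^*$ yields continuous $V_1^k, V_2^k$ to which Theorem~\ref{thm:mainresult} applies.

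The main technical obstacle, and the only non-routine step, is the balancing of $\varepsilon_k$: it must decay fast enough that the product $\int e^{-\widetilde U_k}\int e^{-\widetilde U_k^*}$ still tends to $e$ and the moment measures still converge to $\delta_1$ and $\delta_{-1}$ in $W_2$, yet slowly enough that the Fisher informations $I(\eta_i^k|\gamma_1)$ stay finite and the entropies $H(\eta_i^k|\gamma_1)$ remain controlled. Once these quantitative estimates are in place, the vanishing gap in $\IS_1(e)$ translates through the equivalence in Theorem~\ref{thm:mainresult} into a vanishing gap in the deficit inequality, yielding the corollary.
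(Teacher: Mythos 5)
The first half of your argument — deriving the inequality and its equivalence with $\IS_1(e)$ by applying Theorem~\ref{thm:mainresult} with $n=1$, $c=e$, noting that essential continuity reduces to continuity in dimension $1$, and invoking Fradelizi--Meyer — is exactly the paper's route, and it is correct.

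For the sharpness part, your identification of the correct limit objects (the shifted exponentials $\tau,\bar\tau$ with moment measures $\delta_1,\delta_{-1}$) is also what the paper does. But the execution leaves a genuine gap. Your candidate $U_k(x)=x+\chi_{[-1,k]}(x)$ is, as you note, discontinuous, and the repair you propose — a logarithmic barrier $\widetilde U_k(x)=x-\varepsilon_k\log((x+1)(k-x))$ with a carefully tuned rate $\varepsilon_k\to 0$ — is sketched but not carried out. You explicitly flag the balancing of $\varepsilon_k$ against the Santal\'o product, the Fisher informations, and the $W_2$ convergence of the moment measures as ``the main technical obstacle,'' which is precisely the part a proof must supply. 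The paper avoids this two-parameter tuning entirely by using a \emph{finite-valued} piecewise-linear approximation, $V_1^k(x)=-k(x+1)$ for $x<-1$ and $V_1^k(x)=x+1$ for $x\geq -1$ (and $V_2^k(x)=V_1^k(-x)$), which is automatically continuous and makes all quantities closed-form: the moment measures are two-point discrete measures, the monotone coupling gives $\mathcal{T}(\nu_1^k,\nu_2^k)=1$, $H(\eta_i^k|\Leb)=-1-\log(1+1/k)$, and, since the $V_i^k$ are finite-valued, the integration-by-parts identity $\mathcal{T}(\nu_i,\eta_i)=n$ lets one use the reduced formula $\Delta(\eta_1^k,\eta_2^k)=\mathcal{T}(\nu_1^k,\nu_2^k)-H(\eta_1^k|\Leb)-H(\eta_2^k|\Leb)-3=2\log(1+1/k)\to 0$. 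Your barrier construction has a compact domain $(-1,k)$, so this simplification is unavailable and you would have to track the boundary terms in $\mathcal{T}(\nu_i,\eta_i)$ as well.

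There is also a subtle inaccuracy in your ``limit'' computation: you write $\delta_1(\eta_1^\infty)+\delta_1(\eta_2^\infty)=3-\log(2\pi)$ for the limits $\tau,\bar\tau$, but $I(\tau|\gamma_1)=+\infty$ since the density of $\tau$ with respect to $\gamma_1$ jumps at $x=-1$; the finite value $2$ is $\tilde I(\tau|\gamma_1)$, not $I(\tau|\gamma_1)$, so $\delta_1(\tau)=+\infty$. Equality in the deficit bound is never attained in the admissible class, only approached — which is exactly why the regularization step cannot be elided.
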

The sequences $(\eta_1^k)_{k\geq 1}$ and $(\eta_2^k)_{k\geq 1}$ are approximations in the class of log-concave measures with a continuous density of the following two probability measures
\[
\tau(dx) = e^{-(1+x)} \mathbf{1}_{[-1,+\infty[}(x)\,dx\qquad \text{and}\qquad \bar{\tau}(dx) = e^{x-1} \mathbf{1}_{]-\infty,1]}(x)\,dx
\]
whose minus log densities realize equality in $\IS_1(e)$, and are up to affine transformations the only cases of equality, as observed by Fradelizi and Meyer \cite{FM08a}. In particular, as the proof of Corollary \ref{cor:FMd1} will reveal, there is no equality cases in the logarithmic Sobolev formulation of the inverse Santal\'o inequality. This point will be further commented in Section \ref{sec:HWI}.

In a similar way, since $\IS_{n,u}(4)$ holds for every $n\geq 1$, the following result follows by choosing $\eta_2 = \tau_s^{\otimes n}$, where 
\[
\tau_s(dx) = \frac{1}{2} e^{-|x|}\,dx
\]
denotes the symmetric exponential distribution on $\R$. For every $n\geq 1$, let $C_n \subset \R^n$ be the unit discrete cube $C_n = \{-1,1\}^n$ and denote by $\lambda_{C_n}$ the uniform probability measure on $C_n$.

\begin{theorem}\label{thm:mainresult2}
For any log-concave and unconditional probability measure $\eta$ on $\R^n$ with $d\eta = e^{-V}\,dx$ where $V:\R^n \to \R\cup\{+\infty\}$ is an essentially continuous convex function, it holds
\[
H(\eta |\gamma_n)+\frac{1}{2}W_2^2\left(\nu,  \lambda_{C_n}\right)  \leq \frac{n}{2}\log \left(\frac{\pi e}{2}\right)+   \frac{1}{2} I(\eta| \gamma_n),
\]
where $\nu = \nabla V_\# \eta$ is the moment probability measure of $\eta$.
In other words, for such $\eta$,
\[
\delta_n(\eta) \geq \frac{1}{2}W_2^2\left(\nu,  \lambda_{C_n}\right)- \frac{n}{2}\log \left(\frac{\pi e}{2}\right).
\]
Moreover, there exists a sequence of product measures $(\eta_k^{\otimes n})_{k\geq 1}$ such that 
\[
\delta_n(\eta_k^{\otimes n}) - \frac{1}{2}W_2^2\left(\nu_k^{\otimes n},  \lambda_{C_n}\right) + \frac{n}{2}\log \left(\frac{\pi e}{2}\right) \to 0,
\]
as $k\to \infty$, where for $k\geq 1$, $\nu_k^{\otimes n}$ denotes the moment measure of $\eta_k^{\otimes n}$.
\end{theorem}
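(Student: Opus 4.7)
My plan is to apply the unconditional form of Theorem \ref{thm:mainresult} with $c=4$ (which is licit because $\IS_{n,u}(4)$ holds, by Fradelizi--Meyer) to the pair $(\eta_1,\eta_2) = (\eta,\tau_s^{\otimes n})$, and then identify the resulting bound with the claimed inequality.

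The first step is to check that $\tau_s^{\otimes n}$ is an admissible choice for $\eta_2$: its minus log density $V_2(x) = \sum_i |x_i| + n\log 2$ is convex, unconditional and essentially continuous (its domain is all of $\R^n$, so the boundary condition is trivial). Then I would compute the three quantities attached to $\tau_s^{\otimes n}$ that appear in Theorem \ref{thm:mainresult}. The gradient $\nabla V_2(x) = (\operatorname{sign}(x_1),\ldots,\operatorname{sign}(x_n))$ exists a.e., and since each $\tau_s$ factor is symmetric, its push-forward under $\tau_s^{\otimes n}$ is exactly $\lambda_{C_n}$. Tensorization together with $\E_{\tau_s}[X^2] = 2$ and $\E_{\tau_s}[|X|] = 1$ give $H(\tau_s^{\otimes n} | \gamma_n) = \tfrac{n}{2}\log(\pi/2)$. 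Using the identity $I(\eta|\gamma_n) = \int |x - \nabla V|^2\,d\eta$ applied to $V_2$ (the kink on the coordinate hyperplanes is harmless since $h^{1/2}$ is locally Lipschitz on axis-parallel lines, hence absolutely continuous) yields $I(\tau_s^{\otimes n}|\gamma_n) = n\,\E_{\tau_s}[(X-\operatorname{sign}(X))^2] = n$.

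Substituting these values into Theorem \ref{thm:mainresult} with $c=4$ (so that $n\log(2\pi/c) = n\log(\pi/2)$) and cancelling $\tfrac{n}{2}\log(\pi/2)$ from both sides leaves
\[
H(\eta|\gamma_n) + \tfrac{1}{2}W_2^2(\nu,\lambda_{C_n}) \leq \tfrac{1}{2}I(\eta|\gamma_n) + \tfrac{n}{2} + \tfrac{n}{2}\log(\pi/2) = \tfrac{1}{2}I(\eta|\gamma_n) + \tfrac{n}{2}\log(\pi e/2),
\]
which is the announced bound.

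For the sharpness statement, the tensorization of $H$, $I$ and $W_2^2$ together with $\lambda_{C_n} = \lambda_{C_1}^{\otimes n}$ reduce the task to exhibiting a single one-dimensional sequence $(\eta_k)$ of smooth, unconditional, log-concave probability measures for which $\delta_1(\eta_k) - \tfrac{1}{2}W_2^2(\nu_k,\lambda_{C_1}) + \tfrac{1}{2}\log(\pi e/2) \to 0$. The natural target is an approximation of the uniform distribution on $[-1,1]$, which (together with $\tau_s$) forms the extremal pair for $\IS_{1,u}(4) = \IS_{1,s}(4)$. The main obstacle I expect is precisely here: naive smoothings such as $e^{-x^{2k}/k}/Z_k$ produce moment measures that concentrate at $0$ and fail to balance the blow-up of the Fisher information in the boundary layer against the blow-up of the Wasserstein cost, so the approximation of $\chi_{[-1,1]}$ must be shaped carefully, by tracing which steps of the proof of Theorem \ref{thm:mainresult} must be made asymptotically tight. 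Once the one-dimensional sequence is produced, tensorization yields the required product sequence $(\eta_k^{\otimes n})$.
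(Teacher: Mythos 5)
Your proof of the main inequality is correct and is essentially the paper's argument: apply the unconditional form of Theorem \ref{thm:mainresult} with $c=4$ (valid since $\IS_{n,u}(4)$ holds), take $\eta_2=\tau_s^{\otimes n}$, and compute the three attached quantities. Your computations $\nu_2=\lambda_{C_n}$, $I(\tau_s^{\otimes n}\mid\gamma_n)=n$, and $H(\tau_s^{\otimes n}\mid\gamma_n)=\tfrac{n}{2}\log(\pi/2)$ are right (the paper's proof displays $\tfrac{n}{2}\log(e\pi/2)$ for this last entropy, which appears to be a slip: plugging that value into the bound from Theorem \ref{thm:mainresult} would give the constant $\tfrac{n}{2}\log(\pi/2)$ instead of the announced $\tfrac{n}{2}\log(\pi e/2)$; your value makes the algebra close). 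Your remark that the kink of $V_2(x)=\sum_i\lvert x_i\rvert+n\log2$ is harmless is exactly the content of Remark \ref{rem:ac}/Lemma \ref{lem:ac}: $e^{-V_2/2}$ is absolutely continuous on almost every axis-parallel line, so $\tilde I$ and $I$ agree for $\tau_s^{\otimes n}$.

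However, the sharpness statement is not proved: you correctly identify the target (approximate the uniform measure on $[-1,1]$ in one dimension and then tensorize, using that $H$, $I$ and $W_2^2$ all tensorize and $\lambda_{C_n}=\lambda_{C_1}^{\otimes n}$), but you stop at observing that "the approximation must be shaped carefully" without actually producing $(\eta_k)$ or checking the limit. This is the genuinely nontrivial part of the theorem. The paper's construction is the inf-convolution-type smoothing $d\eta_k=Z_k^{-1}e^{-V_k}\,dx$ with $V_k$ piecewise linear: $V_k\equiv 0$ on $[-1,1]$ and $V_k(x)=k(\lvert x\rvert-1)$ for $\lvert x\rvert\geq 1$. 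Then $\nu_k=\tfrac{1}{2(k+1)}\delta_{-k}+\tfrac{k}{k+1}\delta_0+\tfrac{1}{2(k+1)}\delta_k$, and an explicit (slightly delicate, because $I(\eta_k\mid\gamma_1)$ and $W_2^2(\nu_k,\lambda_{C_1})$ both diverge like $k$) expansion gives $\tfrac12 I(\eta_k\mid\gamma_1)-H(\eta_k\mid\gamma_1)-\tfrac12 W_2^2(\nu_k,\lambda_{C_1})\to-\tfrac12\log(\pi e/2)$. So your instinct that a naive power-type smoothing such as $e^{-x^{2k}/k}$ will not work is sound, but the real point is that the correct regularization is exactly the one whose potential's Legendre transform stays controlled and whose moment measure spreads mass to $\{\pm k\}$ in the right proportion; that is the inf-convolution smoothing. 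You should work out these one-dimensional asymptotics explicitly before invoking tensorization.
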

This time the sequence $(\eta_k)_{k\geq 1}$ is an approximation in the class of log-concave measures with a continuous density of the uniform measure on $[-1,1]$.  Note that Theorem \ref{thm:mainresult2} provides a new sharp dimensional lower bound on the deficit $\delta_n$ on the class of unconditional log-concave probability measures with a regular density.

Let us now give a flavor of the proof of Theorem  \ref{thm:mainresult} (in the case of $\IS_n(c)$, the other variants being similar). To prove Theorem \ref{thm:mainresult}, we will establish as an intermediate step that the reverse Santal\'o inequality $\IS_n(c)$ holds if and only if for all $\nu_1,\nu_2 \in \mathcal{P}_2(\R^n)$,
\begin{equation}\label{eq:intro1}
\inf_{\eta_1 \in \mathcal{P}_2(\R^n)} \left\{ \mathcal{T}(\nu_1,\eta_1) + H(\eta_1 | \mathrm{Leb})\right\} + \inf_{\eta_2 \in \mathcal{P}_2(\R^n)} \left\{ \mathcal{T}(\nu_2,\eta_2) + H(\eta_2 | \mathrm{Leb})\right\}  \leq -n\log c +\mathcal{T}(\nu_1,\nu_2),
\end{equation}
where $H(\,\cdot\, | \mathrm{Leb})$ denotes (minus) the Shannon entropy functional defined for all $d\eta = h\,dx$ by
\[
H(\eta | \mathrm{Leb}) = \int \log h \,d\eta
\] 
as soon as the integral makes sense, and where $\mathcal{T}(\,\cdot\,,\,\cdot\,)$ is the so-called maximal correlation transport cost defined as follows: for all $\nu_1,\nu_2\in \mathcal{P}_2(\R^n)$,
\[
\mathcal{T}(\nu_1,\nu_2) = \sup_{X \sim \nu_1, Y \sim \nu_2} \E[X\cdot Y].
\]
The proof of the equivalence between \eqref{eq:intro1} and $\IS_n(c)$ follows by adapting an argument of Bobkov and G\"otze \cite{BG99} showing equivalence between transport-entropy inequalities and infimum convolution inequalities (see also \cite{Gl10, GRST17} for extensions). While Bobkov and G\"otze argument was based on the classical duality relations between relative entropy and log-Laplace functionals (recalled in Section \ref{sec:classicalduality}), ours is based on a twisted duality involving the following functionals:
\[
L(f|\Leb) := - \log \int e^{-f^*}\,dx,\qquad f\in \mathcal{F}(\R^n).
\]
and
\[
K(\nu | \Leb) := \sup_{f \in L^1(\nu) \cap \mathcal{F}(\R^n)} \left\{\int (-f)\,d\nu -L(f|\Leb)\right\},\qquad  \nu \in \mathcal{P}_1(\R^n).
\] 
A simple calculation shows that 
\[
K(\nu | \Leb)  = - \inf_{\eta \in \mathcal{P}_1(\R^n)} \left\{ \mathcal{T}(\nu,\eta) + H(\eta | \mathrm{Leb})\right\}.
\]
To see that $\IS_n(c)$ implies \eqref{eq:intro1}, observe that  for all function $f\in \mathcal{F}(\R^n)$ such that $0<\int e^{-f}\,dx$ and $0<\int e^{-f^*}\,dx$ and $\nu_1,\nu_2 \in \mathcal{P}_2(\R^n)$, it holds
\[
\int -f\,d\nu_1 + \log \int e^{-f^*}\,dx + \int -f^*\,d\nu_2 + \log \int e^{-f}\,dx \geq n\log c-\left(\int f\,d\nu_1 +\int f^*\,d\nu_2\right).
\]
Bounding the left hand side by $K(\nu_1| \mathrm{Leb})+K(\nu_2| \mathrm{Leb})$, one sees that   \eqref{eq:intro1} follows (up to technicalities) by optimizing over $f$ and using the dual Kantorovich formula 
\[
\mathcal{T}(\nu_1,\nu_2) = \inf_{f\in \mathcal{F}(\R^n)} \int f\,d\nu_1 + \int f^*\,d\nu_2.
\]
Let us give an  idea of the proof of the converse implication. As observed by Cordero-Erausquin and Klartag \cite{CEK15}, a remarkable consequence of the Prekopa-Leindler inequality is that the functional $L(\,\cdot\,|\Leb)$ is convex on $\mathcal{F}(\R^n)$ (see the proof of Lemma \ref{lem:CEK} where this simple argument is recalled). The above functionals will be shown in Theorem \ref{thm:dualLeb} to be in convex duality (see Section \ref{Sec:Lebesgue} for precise statements about this duality), in the sense that the functional $L(\,\cdot\,|\Leb)$ can be recovered from the functional $K(\,\cdot\, | \Leb)$ as follows:
\[
L(f|\Leb) = \sup_{\nu \in \mathcal{P}_1(\R^n)} \left\{\int (-f)\,d\nu - K(\nu|\Leb)\right\}
\]
for all $f \in \mathcal{F}(\R^n)$ such that $\int e^{-f^*}\,dx>0$. This reverse relation is the key to complete the equivalence between $\IS_n(c)$ and \eqref{eq:intro1}.

To further analyze the inequality \eqref{eq:intro1}, we will make use of the remarkable characterization of moment measures recently obtained by Cordero-Erausquin and Klartag \cite{CEK15} (building on earlier works \cite{WZ04, Don08, BB13, Leg16})) and revisited by Santambrogio \cite{San16}. As shown in \cite{CEK15,San16}, for a given $\nu \in \mathcal{P}_1(\R^n)$ the quantity $\inf_{\eta \in \mathcal{P}_1(\R^n)} \left\{ \mathcal{T}(\nu,\eta) + H(\eta | \mathrm{Leb})\right\}$ is not $-\infty$ if and only if $\nu$ is centered and its support is not contained in a hyperplane (for completeness the proof of ``the only if" case is sketched in the proof of Proposition \ref{prop:domainK}). In this case, the optimal $\eta$ turns out to be a log-concave probability measure with a density of the form $e^{-V}$, where $V \in \mathcal{F}(\R^n)$ is an essentially smooth convex function and $\nu$ is the moment measure of $\eta$. The converse is also true: if $\nu$ is the moment measure of a given log-concave probability measure $\eta_o$ with a regular density as above, then the function $\eta \mapsto \mathcal{T}(\nu,\eta) + H(\eta | \mathrm{Leb})$ reaches its infimum at $\eta_o$. Let us mention that the notion of moment measures together with the above characterization recently found several applications in convex geometry \cite{Kla14, KlaKol17}, probability theory \cite{Fathi19,KK17} or functional inequalities \cite{FGJ17}. Here, we will use this description of moment measures to reparametrize the inequality \eqref{eq:intro1} in terms of $\eta_1,\eta_2$ instead of $\nu_1,\nu_2$, yielding to the following equivalent statement: for all log-concave probability measures $\eta_1,\eta_2$ with an essentially continuous log-density, it holds
\begin{equation}\label{eq:Entropy-Transport}
\mathcal{T}(\nu_1,\eta_1) + H(\eta_1 | \mathrm{Leb}) + \mathcal{T}(\nu_2,\eta_2) + H(\eta_2 | \mathrm{Leb})  \leq -n\log c +\mathcal{T}(\nu_1,\nu_2),
\end{equation}
where $\nu_1,\nu_2$ are the moment measures of $\eta_1,\eta_2$. This last inequality formulated with respect to the Lebesgue measure can then easily be recasted in terms of the Gaussian measure $\gamma_n$ yielding in particular to Theorem \ref{thm:mainresult}. 

Let us further comment the Entropy-Transport inequality \eqref{eq:Entropy-Transport}. It turns out that \eqref{eq:Entropy-Transport} also admits an information theoretic formulation. Recall that the entropy power of a random vector $X$ with law $\eta$ on $\R^n$ is defined as 
\begin{equation}\label{eq:entropypower}
N(X) = \frac{1}{2\pi e}\exp\left(-\frac{2}{n} H(\eta | \Leb)\right).
\end{equation}
With the notation above, one can easily prove (see Corollary \ref{cor:entropypower}) using a simple homogeneity argument that \eqref{eq:Entropy-Transport} is equivalent to 
\begin{equation}\label{eq:EPI}
N(X_1)N(X_2)\mathcal{T}(\nu_1,\nu_2)^2 \geq \left(\frac{nc}{2\pi}\right)^2,
\end{equation}
for random vectors $X_1,X_2$ having log concave distributions $\eta_1,\eta_2$ with full support and associated moments measures $\nu_1,\nu_2$. Let us note that if $X_1 \overset{d}{=} X_2$, then $\mathcal{T}(\nu_1,\nu_1) =  \int |\nabla V_1|^2\,d\eta_1 :=I(X_1)$ is the Fisher information of $\eta_1$. Indeed, the optimal coupling in $\mathcal{T}(\nu_1,\nu_1)$ is $(Y_1,Y_1)$ with $Y_1 \sim \nu_1$ so that 
\[
\mathcal{T}(\nu_1,\nu_1) = \int |x|^2\,\nu_1(dx) = \int |\nabla V_1(x)|^2\,\eta_1(dx) 
\]
So, in this case, \eqref{eq:EPI} boils down to 
\[
N(X_1)I(X_1) \geq \frac{nc}{2\pi}.
\]
A well known result of Stam \cite{Sta59} shows that the best constant in the inequality above is $c=2\pi$ (for general random vectors $X_1$). Inequality \eqref{eq:EPI} thus appears as some bivariate form of Stam's inequality for log-concave random vectors.

Before closing this introduction, let us point out that the results obtained in the present paper for reverse Santal\'o inequalities echo several preceding results developed in the framework of direct Santal\'o inequalities.
As proved by Ball in \cite{B87} in the case of even functions and then extended by Artstein-Avidan, Klartag and Milman \cite{AAKM04} and Fradelizi and Meyer \cite{FM07}, the direct Santal\'o inequality admits the following equivalent functional form: for any measurable function $f : \R^n \to \R\cup\{+\infty\}$, there exists $a \in \R^n$ such that
\begin{equation}\label{eq:directSantalo}
\int e^{-f_a}\,dx\int e^{-(f_a)^*}\,dx \leq (2\pi)^n,
\end{equation}
where $f_a(x) = f(x+a)$, $x\in \R^n$. When $f$ is even, $a$ can be chosen to be $0$. Direct proofs of this functional version were then obtained by Lehec \cite{Leh08, Leh09a, Leh09b}. The functional inequality \eqref{eq:directSantalo} immediately gives back the convex body version \eqref{eq:santalo-set}, but it is also interesting in itself. Let us mention two recent applications of the inequality \eqref{eq:directSantalo} that are of the same spirit as our main contributions. It was shown by Caglar, Fradelizi, Gu\'edon, Lehec, Sch\"utt and Werner \cite{CFGLSW16} that the inequality \eqref{eq:directSantalo} implies back some inverse logarithmic Sobolev inequality first obtained by Artstein-Avidan, Klartag, Sch\"{u}tt and Werner \cite{AKSW12}. More recently \cite{Fathi18}, Fathi showed that the inequality \eqref{eq:directSantalo} is in fact equivalent to some sharp symmetrized form of the Talagrand transport cost inequality (see Section \ref{sec:symTal} for more details). These symmetrized forms of Talagrand transport inequalities were further studied by Tsuji in \cite{Tsu20} (with in particular a direct transport proof of this sharp transport inequality in dimension 1). Finally, the inequality \eqref{eq:EPI} is reminiscent of a work by Lutwak, Yang and Zhang \cite{LYZ04} identifying the best constant $c_{p,\lambda,n}$ in the inequality
\[
c_{p,\lambda,n}(N_\lambda (X_1)N_\lambda(X_2))^{p/n} \leq \E[|X_1 \cdot X_2|^p]
\]
where $X_1,X_2$ are arbitrary independent random vectors on $\R^n$ with finite $p$-th moment, $N_\lambda$ is the $\lambda$-R\'enyi-entropy power, and the parameters $p,\lambda,n$ are in the range $p\geq 1$, $\lambda \geq \frac{n}{n+p}$. As proved in \cite{LYZ04}, this family of inequalities gives back the Santal\'o inequality when $X_1,X_2$ are uniformly distributed on convex bodies $K, K^\circ$ and when $\lambda$ and $p$ are sent to $\infty$.

The paper is organized as follows. In Section \ref{sec:duality}, we introduce two functionals $K(\,\cdot\, | m)$ and $L(\,\cdot\, | m)$ associated to a given log-concave measure $m$ on $\R^n$ (which coincide with the functionals considered above when $m$ is the Lebesgue measure). We study their basic properties and we show in Theorem \ref{thm:dualLeb} (using the result of \cite{CEK15}) that these functionals are convex conjugates when $m$ is the Lebesgue measure. This duality relation between these functionals turns out to be true for a general log-concave measure $m$, as shown in Theorem \ref{thm:dualitygen}. In Section 2, we use the duality between functionals 
$K(\,\cdot\, | \Leb)$ and $L(\,\cdot\, | \Leb)$ to establish several dual equivalent versions of the functional inverse Santal\'o inequality $\IS_{n}(c)$ and its variants. These dual versions involve various probability ``distances" such as (relative) entropy, (relative) Fisher information and optimal transport costs. The proofs of Theorems \ref{thm:mainresult} and \ref{thm:mainresult2} are given in this section. Finally, Section 3 contains the proof of Theorem \ref{thm:dualitygen} (based on Sion min-max theorem) and Section 4 an alternative proof of Theorem \ref{thm:dualLeb} (based on a general version of the Fenchel-Moreau biconjugation theorem).

\section{Duality results}\label{sec:duality}
In the following $m$ will always denote a Borel measure on $\R^n$ such that $m(K) <+\infty$ for all compact sets $K \subset \R^n.$

\subsection{Convex duality between relative entropy and log-Laplace functionals} \label{sec:classicalduality}
Consider the relative entropy functional with respect to $m$: for any $d\nu = h\,dm \in \mathcal{P}(\R^n)$ such that $\log h \in L^1(\nu)$
\[
H(\nu | m) = \int h\log h\,dm,
\]
with the usual convention $0 \log 0 = 0$.
\begin{remark}\label{rem:finite measure}
Note that, when $m$ is a finite measure, then the integral $\int h\log h\,dm$ always makes sense in $\R \cup \{+\infty\}$, since the function $x\log x$ is bounded from below. So in this case, we can extend the definition of $H(\,\cdot\,| m)$ by setting $H(\nu |m) = \int h\log h \,dm$ if $\nu \in \mathcal{P}(\R^n)$ is absolutely continuous with respect to $m$ and $d\nu = h\,dm$ and $H(\nu|m) = +\infty$ if $\nu$ is not absolutely continuous with respect to $m$. We will always adopt this convention when $m$ is a finite measure.
\end{remark}

Recall the following duality results for the relative entropy functional:
\begin{proposition}\label{prop:classicalduality}
If $d\nu = h\,dm \in \mathcal{P}(\R^n)$ with $\log h \in L^1(\nu)$, then
\begin{equation}\label{eq:DualH1}
H(\nu|m) = \sup\left\{ \int f\,d\nu - \log \int e^{f}\,dm : f \text{ s.t }  \int e^f\,dm <+\infty \right\}.
\end{equation}
and, if $\int e^f\,dm <+\infty$, then
\begin{equation}\label{eq:DualH2}
\log \int e^{f}\,dm = \sup\left\{ \int f\,d\nu - H(\nu|m) : d\nu = h\,dm \text{ with } \log h \in L^1(\nu)\right\}.
\end{equation}
\end{proposition}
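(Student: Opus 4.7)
The plan is to derive both identities from the Gibbs variational principle, whose core is the algebraic identity
\[
H(\nu|m) - \int f\,d\nu + \log \int e^f\,dm = H(\nu|\mu_f),
\]
valid whenever $0 < \int e^f\,dm < +\infty$, where $\mu_f$ is the probability measure $d\mu_f = (e^f/\int e^f\,dm)\,dm$. Together with $H(\nu|\mu_f) \geq 0$, which is Jensen's inequality applied to the convex function $x \mapsto x\log x$, this yields the $\leq$ direction of \eqref{eq:DualH1}, namely $\int f\,d\nu - \log\int e^f\,dm \leq H(\nu|m)$ for every $f$ with $\int e^f\,dm \in (0,+\infty)$. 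The matching lower bound is achieved (at least formally) by the choice $f = \log h$, which gives $\int e^f\,dm = \int h\,dm = 1$ and $\int f\,d\nu = H(\nu|m)$; if one insists on $\R$-valued $f$, an approximation such as $f_n = \log(h \vee n^{-1})$ combined with the hypothesis $\log h \in L^1(\nu)$ and dominated convergence completes the argument.

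For \eqref{eq:DualH2}, the upper bound rewrites, for any admissible $d\nu = h\,dm$, as
\[
\int f\,d\nu - H(\nu|m) = \int \log(e^f/h)\,d\nu \leq \log \int (e^f/h)\,d\nu = \log \int e^f\,dm,
\]
by Jensen's inequality applied to the concave function $\log$. For the matching lower bound one cannot directly take $\nu = \mu_f$, since the admissibility condition $\log h \in L^1(\nu)$ then translates into $f \in L^1(\mu_f)$, and this may fail even when $e^f \in L^1(m)$. To circumvent this, I would use an exhaustion by the sets $A_n = \{|f| \leq n\} \cap B_n$, setting $Z_n := \int_{A_n} e^f\,dm$ and $d\nu_n := Z_n^{-1} e^f \mathbf{1}_{A_n}\,dm$. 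On $A_n$ the function $f$, and therefore $\log(d\nu_n/dm) = f - \log Z_n$, is bounded, so $\log h_n \in L^1(\nu_n)$ automatically, and a direct computation gives $\int f\,d\nu_n - H(\nu_n|m) = \log Z_n$; monotone convergence then yields $Z_n \to \int e^f\,dm$ and the claimed equality (the case $\int e^f\,dm = 0$ being trivial).

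The conceptual content of both identities is the classical Donsker--Varadhan / Gibbs duality, and both $\leq$ inequalities are one-line consequences of Jensen's inequality via the identity above. The principal difficulty is not these inequalities but the production of admissible extremising sequences compatible with the integrability restrictions $\int e^f\,dm < +\infty$ and $\log h \in L^1(\nu)$ built into the proposition, which is handled by the truncation and exhaustion arguments just described.
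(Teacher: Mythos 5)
Your argument is correct and rests on the same classical Gibbs/Donsker--Varadhan duality as the paper's proof, with the same extremizers, but the packaging differs in two ways worth noting. For the inequality $\int f\,d\nu-\log\int e^f\,dm\le H(\nu|m)$ the paper does not pass through the identity $H(\nu|m)-\int f\,d\nu+\log\int e^f\,dm=H(\nu|\mu_f)$; it integrates the pointwise Young inequality $xy\le e^x+y\log y-y$ and then optimizes over an additive shift $f\mapsto f+a$. The two routes are equivalent (nonnegativity of $H(\nu|\mu_f)$ is the integrated Young inequality), but the pointwise route has the side benefit of showing that $[fh]_+$ is $m$-integrable, hence that $\int f\,d\nu$ makes sense in $[-\infty,+\infty)$; you use this implicitly when you split $\int f\,d\nu-H(\nu|m)=\int\log(e^f/h)\,d\nu$, and it deserves one sentence in your write-up. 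For the attainment in \eqref{eq:DualH2}, the paper simply asserts equality at $\nu=\mu_f$, tacitly assuming $f\in L^1(\mu_f)$; your observation that this integrability can fail even when $\int e^f\,dm<+\infty$, and your exhaustion by $A_n=\{|f|\le n\}\cap B_n$ with $\nu_n\propto e^f\mathbf{1}_{A_n}\,dm$, is a genuine refinement of the published argument and mirrors exactly what the paper does later in Lemma \ref{lem:log-laplace} with the truncation $\mathbf{1}_{B_k}$.

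One defect, confined to a side remark: the fallback $f_n=\log(h\vee n^{-1})$ proposed for \eqref{eq:DualH1} in case one insists on real-valued $f$ is inadmissible whenever $m$ is an infinite measure (the main case here, $m=\Leb$), since $\int e^{f_n}\,dm\ge n^{-1}m(\R^n)=+\infty$ violates the constraint $\int e^f\,dm<+\infty$. This is harmless for the proposition as the paper reads it, because $f$ is explicitly allowed to take values in $\R\cup\{\pm\infty\}$, so the choice $f=\log h$ (for which $\int e^f\,dm=1$) is already admissible; if you do want real-valued test functions you must also truncate in space (e.g.\ replace $\log h$ by $-\infty$-type cutoffs only through sets of finite $m$-measure), so the remark as stated should be dropped or repaired.
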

In both formulas, $f$ is allowed to take values in $\R\cup\{\pm \infty\}$ and the fact that the integral $ \int f\,d\nu$ makes sense is a consequence of the proof below.
Equalities \eqref{eq:DualH1} and \eqref{eq:DualH2} express that the two convex functionals $\nu \mapsto H(\nu|m)$ and $f\mapsto \log \int e^f \,dm$ are in convex duality. For the sake of completeness, we recall a classical proof of these identities (see also \cite[Lemma 6.2.13]{DZ}).
\begin{proof}
Both results come from the following well known Young type inequality:
\[
xy \leq e^x + y\log y - y,\qquad \forall x \in \R, \forall y\geq0.
\]
Observe that if $d\nu = h\,dm \in \mathcal{P}(\R^n)$ with $\log h \in L^1(\nu)$ and $f$ is such that $\int e^f \,dm<+\infty$, then $f h \leq e^f  + h\log h - h$ and so $[fh]_+$ is $m$-integrable and satisfies 
\[
\int f\,d\nu \leq \int e^f\,dm + H(\nu|m)-1.
\]
Changing $f$ into $f+a$, for some $a \in \R$, then gives that
\[
\int f\,d\nu \leq e^a  \int e^f\,dm + H(\nu|m)-1 -a
\]
and optimizing over $a$ yields to 
\[
\int f\,d\nu \leq \log  \int e^f\,dm + H(\nu|m).
\]
For a given $\nu$, there is equality if $f = \log h$, whereas for a given $f$ such that $0<\int e^f\,dm<+\infty$, there is equality for $\nu = \frac{e^{f}}{\int e^f\,dm}$. If $\int e^f\,dm = 0$ (which means that $f=-\infty$ $m$ a.s), then it follows from the inequality above that $\int f\,d\nu = - \infty$ for any $d\nu = h\,dm$ such that $\log h \in L^1(\nu)$. 
This completes the proof.
\end{proof}

\subsection{A twisted log-Laplace functional}Following \cite{CEK15, San16}, we  will now consider a twisted version of  \eqref{eq:DualH1} and \eqref{eq:DualH2} where the Log-Laplace functional 
\[
f\mapsto \log \int e^f\,dm
\]
is replaced by the functional $L(\,\cdot\,|m)$ defined by
\begin{equation}\label{eq:Laplacestar}
L (f |m):= -\log \int e^{-f^*}\,dm,\qquad f\in \mathcal{F}(\R^n),
\end{equation}
where we recall that $f^*$ denotes the Fenchel-Legendre conjugate of $f$ defined in \eqref{eq:FL} and that $\mathcal{F}(\R^n)$ denotes the set of all convex and semicontinuous functions $f : \R^n \to \R \cup\{+\infty\}$, with a non empty domain. 

As observed in \cite{CEK15}, the functional $L(\,\cdot\,|m)$ turns out to be convex, when the measure $m$ is assumed to be log-concave. 
\begin{lemma}\label{lem:CEK}
If $m$ is a log-concave measure on $\R^n$, then for any measurable functions $f_0,f_1 : \R^n\to \R\cup\{+\infty\}$, it holds 
\[
 \int e^{-\left((1-t)f_0+tf_1\right)^*}\,dm \geq \left(\int e^{-f_0^*}\,dm\right)^{1-t}\left(\int e^{-f_1^*}\,dm\right)^t,\qquad \forall t \in [0,1].
\]
\end{lemma}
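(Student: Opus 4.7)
The plan is to deduce this from the Prékopa--Leindler inequality, writing $dm = e^{-V}\,dx$ for some convex $V: \R^n \to \R \cup \{+\infty\}$ (as granted by the log-concavity of $m$). Setting $f_t := (1-t)f_0 + tf_1$, I want to produce a pointwise estimate of Prékopa--Leindler type for the triple
\[
u_0 = e^{-f_0^* - V}, \qquad u_1 = e^{-f_1^* - V}, \qquad u_t = e^{-f_t^* - V},
\]
and then integrate.

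The key ingredient is the following elementary Fenchel inequality: for any $z_0, z_1 \in \R^n$,
\[
f_t^*\bigl((1-t)z_0 + tz_1\bigr) \;\leq\; (1-t)\,f_0^*(z_0) \;+\; t\,f_1^*(z_1).
\]
I would prove this directly from the definition \eqref{eq:FL}: for every $x \in \R^n$, expanding the inner product as $x \cdot ((1-t)z_0 + tz_1) = (1-t)\,x\cdot z_0 + t\,x\cdot z_1$ and pairing each piece with the corresponding convex combination of $f_0(x), f_1(x)$ gives
\[
x \cdot \bigl((1-t)z_0 + tz_1\bigr) - f_t(x) = (1-t)\bigl(x\cdot z_0 - f_0(x)\bigr) + t\bigl(x\cdot z_1 - f_1(x)\bigr) \leq (1-t)f_0^*(z_0) + tf_1^*(z_1),
\]
and taking the supremum over $x$ yields the claim. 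Note that this does not require $f_0, f_1$ to be convex, which is fortunate since the hypothesis only assumes measurability.

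Combining this with the convexity of $V$, which gives $V((1-t)z_0 + tz_1) \leq (1-t)V(z_0) + tV(z_1)$, I obtain
\[
u_t\bigl((1-t)z_0 + tz_1\bigr) \;\geq\; u_0(z_0)^{1-t}\,u_1(z_1)^{t}, \qquad \forall z_0, z_1 \in \R^n.
\]
The functions $u_0, u_1, u_t$ are non-negative and measurable (each $f_i^*$ is convex and lower semi-continuous as a supremum of affine functions, and $V$ is convex), so the Prékopa--Leindler inequality applies and gives $\int u_t\,dx \geq (\int u_0\,dx)^{1-t}(\int u_1\,dx)^t$, which is precisely the desired inequality after rewriting the integrals with respect to $m$.

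I do not anticipate a serious obstacle: the only points to watch are degenerate cases where $\int e^{-f_i^*}\,dm = 0$ (in which case the right-hand side vanishes and the inequality is trivial) and the measurability of $f_i^*$ (automatic from lower semi-continuity). The argument is essentially the observation of Cordero-Erausquin and Klartag \cite{CEK15} that log-concavity of $m$ is exactly what is needed to push the classical Prékopa--Leindler argument through with the extra factor $e^{-V}$.
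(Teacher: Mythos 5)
Your proof is correct, and it takes a route that is recognizably close to, but not identical with, the one in the paper. The paper writes $\Phi(t,x) = f_t^*(x) + V(x)$, observes that it is \emph{jointly} convex on $[0,1]\times\R^n$ (being a supremum of functions affine in $(t,x)$, plus a convex $V$), and then invokes Pr\'ekopa's theorem on marginals of log-concave functions to conclude directly that $t\mapsto -\log\int e^{-\Phi(t,\cdot)}\,dx$ is convex. You instead unpack the same joint convexity into the pointwise estimate $f_t^*\bigl((1-t)z_0+tz_1\bigr)\le (1-t)f_0^*(z_0)+tf_1^*(z_1)$ (your Fenchel inequality is exactly the statement that $(t,z)\mapsto f_t^*(z)$ is jointly convex), combine it with convexity of $V$, and feed the triple $(u_0,u_1,u_t)$ into the three-function Pr\'ekopa--Leindler inequality. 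So the two proofs draw on the same circle of ideas and differ only in which formulation of Pr\'ekopa's inequality is invoked: the marginal/log-concavity version (paper) versus the standard three-function version (yours). Yours is a bit more self-contained in that it verifies the Pr\'ekopa--Leindler hypothesis explicitly rather than citing the marginal theorem; the paper's is more compact. Your remarks about measurability of $f_i^*$ (lower semicontinuity), the fact that $f_0,f_1$ need not be convex, and the degenerate case $\int e^{-f_i^*}\,dm=0$ are all correct and appropriately flagged.
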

The proof of Lemma \ref{lem:CEK} given in \cite{CEK15} is a simple application of Prekopa's theorem (which is a particular case of the Prekopa-Leindler inequality). For completeness, we give below a slightly different derivation of Lemma \ref{lem:CEK} that we learned from an anonymous referee.
\begin{proof}
Let us write $dm=e^{-V}\,dx$ where $V$ is a convex function on $\R^n$. Observe that
\[
\Phi(t,x):=\left((1-t)f_0+tf_1\right)^*(x)+V(x) = \sup_{y\in \R^n} \{x\cdot y - (1-t)f_0(y)-tf_1(y)\}+V(x),\qquad x\in \R^n, t\in [0,1].
\]
Being a supremum of convex functions, $\Phi$ is a convex function on $[0,1]\times \R^n$. According to Prekopa's theorem \cite[Theorem 6]{Pre} on marginals of log-concave functions, the function $t \mapsto-\log \int_{\R^n} e^{-\Phi(t,x)}\,dx$ is also convex, which completes the proof.
\end{proof}

\subsection{A twisted version of the relative entropy functional} 
Mimicking \eqref{eq:DualH1}, we now introduce the following functional: for $\nu \in \mathcal{P}_1(\R^n)$,
\begin{align*}
K(\nu | m) &:= \sup_{f \in L^1(\nu) \cap \mathcal{F}(\R^n)} \left\{\int (-f)\,d\nu + \log \int e^{-f^*} \,dm\right\},\\
& = \sup_{f \in L^1(\nu) \cap \mathcal{F}(\R^n)} \left\{\int (-f)\,d\nu -L(f|m) \right\}.
\end{align*}
When $m$ is the Lebesgue measure on $\R^n$, we will use the notation $K(\,\cdot\,|\Leb)$.

This section is organized as follows: in Section \ref{sec:basicprop}, we first establish some basic properties of this functional, then we prove in Section \ref{sec:altexpr} an alternative expression for $K(\,\cdot\,|m)$ involving the maximum correlation transport cost $\mathcal{T}$ and finally, Section \ref{sec:revduality} establishes a reverse duality formula expressing back the functional $L(\,\cdot\,|m)$ in terms of $K(\,\cdot\,|m)$.

\subsubsection{Basic properties of the functional $K(\,\cdot\, | m)$}\label{sec:basicprop}

We will use repeatedly the following classical lemma in the sequel. For all $r\geq 0$, $B_r$ will denote in all the paper the closed Euclidean ball of radius $r\geq 0$ centered at the origin.
\begin{lemma}\label{lem:infconv}
If $f \in \mathcal{F}(\R^n)$, then for any $r>0$, the function $f_r$ defined by
\[
f_r(x) = \inf_{y\in \R^n} \{f(y) + r |x-y|\},\qquad x\in \R^n
\]
is convex, $r$-Lipschitz, satisfies $f_r \leq f$ and is such that $f_r^*(y) = f^*(y) + \chi_{B_r}(y)$, $y\in \R^n$.
Moreover $f_r \to f$ pointwise monotonically as $r \to +\infty$.
\end{lemma}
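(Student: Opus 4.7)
\medskip

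The plan is to prove the six assertions essentially in the order they are listed, treating the identity $f_r^*=f^*+\chi_{B_r}$ as the central computation.

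\textbf{Easy structural properties.} First, the inequality $f_r\le f$ is immediate by taking $y=x$ in the infimum. Second, $r$-Lipschitzness follows from a one-line triangle inequality: for any $x_1,x_2$ and any $y$,
\[
f_r(x_1)\le f(y)+r|x_1-y|\le f(y)+r|x_2-y|+r|x_1-x_2|,
\]
so taking the infimum over $y$ on the right yields $f_r(x_1)\le f_r(x_2)+r|x_1-x_2|$. For convexity, I would invoke the standard fact that the infimal convolution of two convex functions is convex whenever the infimum is finite; here $f_r=f\,\square\,(r|\,\cdot\,|)$, and the Lipschitz bound just established shows $f_r$ is real-valued (the affine minorant of $f$, recalled below, prevents $f_r=-\infty$). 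Monotonicity $r_1<r_2\Rightarrow f_{r_1}\le f_{r_2}$ is also immediate from the definition.

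\textbf{The Legendre transform identity.} This is the heart of the lemma. I would just exchange suprema and reduce to a one-variable computation:
\[
f_r^*(y)=\sup_{x,z}\{x\cdot y-f(z)-r|x-z|\}=\sup_{z}\Bigl\{-f(z)+\sup_{w}\bigl(w\cdot y+z\cdot y-r|w|\bigr)\Bigr\},
\]
after the change of variables $w=x-z$. The inner supremum equals $z\cdot y$ if $|y|\le r$ (attained at $w=0$, the bound $w\cdot y\le r|w|$ holding by Cauchy--Schwarz) and $+\infty$ otherwise (send $w=t y/|y|$ with $t\to+\infty$); in other words it equals $z\cdot y+\chi_{B_r}(y)$. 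Substituting back gives $f_r^*(y)=f^*(y)+\chi_{B_r}(y)$, as claimed. (Alternatively one can cite the general formula $(f\,\square\,g)^*=f^*+g^*$ together with $(r|\,\cdot\,|)^*=\chi_{B_r}$.)

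\textbf{Pointwise convergence $f_r\uparrow f$.} Monotonicity has already been established, so combined with $f_r\le f$ I only need $\liminf_{r\to\infty}f_r(x)\ge f(x)$ at every $x\in\R^n$. Fix $x$ and, for each $r$, pick $y_r$ with $f(y_r)+r|x-y_r|\le f_r(x)+1/r$. Since $f\in\mathcal{F}(\R^n)$ is proper, convex and lower semi-continuous, it admits an affine minorant $\ell(y)=a\cdot y+b\le f(y)$; substituting into the previous inequality yields $r|x-y_r|\le f_r(x)+1/r-a\cdot y_r-b$. The main point to check is that the sequence $(y_r)$ remains bounded: if $|y_r|\to\infty$ along a subsequence, then the left side grows like $r|y_r|$ while the right side grows at most like $|a||y_r|$, which forces $r\le|a|+o(1)$, a contradiction. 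Thus $(y_r)$ is bounded, the inequality then forces $|x-y_r|\to 0$, and lower semi-continuity of $f$ gives $\liminf_r f(y_r)\ge f(x)$; since $f(y_r)\le f_r(x)+1/r$, this yields $\liminf_r f_r(x)\ge f(x)$, finishing the proof.

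The only step where I expect to be careful is the convergence argument, because one has to rule out the minimizing sequence $(y_r)$ escaping to infinity; all other items are either formal manipulations or immediate from the definition.
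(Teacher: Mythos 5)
Your proposal mirrors the paper's proof for the structural properties and the Legendre identity: $f_r\le f$ is seen by testing $y=x$, the $r$-Lipschitzness and convexity are read off from the infimal-convolution structure, and the conjugate is computed by the same interchange of suprema after a change of variables (your $w=x-z$ is the paper's $v=x-u$). The only place where you diverge is the pointwise monotone convergence: the paper cites \cite[Proposition~4.1.5]{HUL2}, whereas you supply a direct compactness argument, which is a perfectly reasonable thing to do.

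That direct argument, however, has a small but real gap when $f(x)=+\infty$. You bound $r|x-y_r|\le f_r(x)+1/r-a\cdot y_r-b$ and then assert the right-hand side grows at most like $|a||y_r|$; this tacitly assumes $f_r(x)$ stays bounded in $r$, which is only automatic when $f(x)<+\infty$ (since then $f_r(x)\le f(x)$). If $f(x)=+\infty$, $f_r(x)$ may diverge, and then even a bounded minimizing sequence $y_r$ need not converge to $x$ — take $f=\chi_{\{0\}}$ and $x\neq 0$, where $y_r\equiv 0$ and $f_r(x)=r|x|\to\infty$. The fix is cheap: by the monotonicity you already established, $f_r(x)$ increases to some $L\in\R\cup\{+\infty\}$; if $L=+\infty$ the inequality $\liminf_r f_r(x)\ge f(x)$ is trivially true, so one may assume $L<+\infty$, in which case $f_r(x)\le L$ is bounded and the rest of your argument (boundedness of $y_r$, then $y_r\to x$, then lower semi-continuity) goes through verbatim. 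You should insert this one-line reduction before the boundedness step.
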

\begin{proof}
As an infimum of $r$-Lipschitz functions, $f_r$ is also $r$-Lipschitz. It clearly satisfies $f_r \leq f$ and is convex as an infimum convolution of two convex functions. The Legendre transform of $f_r$ can be calculated as follows:
\begin{align*}
f_r^*(y) & = \sup_{x\in \R^n} \{x\cdot y -  f_r(x) \}\\
& =  \sup_{x\in \R^n} \sup_{u\in \R^n}\{x\cdot y -  f(u)- r|x-u| \}\\
& =   \sup_{u\in \R^n}\left\{ u\cdot y -f(u) + \sup_{v\in \R^n}\{v\cdot y - r|v| \} \right\}\\
& = f^*(y) + \chi_{B_r}(y).
\end{align*}
For the pointwise convergence of $f_r$, we refer to \cite[Proposition 4.1.5]{HUL2}.
\end{proof}

It will often be useful to restrict the supremum defining $K(\,\cdot\, | m)$ to the smaller class $\mathcal{F}_{\mathrm{Lip}}(\R^n)$ of all convex and Lipschitz functions on $\R^n.$
\begin{proposition}\label{prop:Lip}
For any $\nu \in \mathcal{P}_1(\R^n)$, the supremum defining $K(\nu|m)$ can be restricted to $\mathcal{F}_{\mathrm{Lip}}(\R^n)$.
\end{proposition}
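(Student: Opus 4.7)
The plan is to approximate any $f \in L^1(\nu) \cap \mathcal{F}(\R^n)$ by its inf-convolutions $f_r$ from Lemma \ref{lem:infconv}, which are convex and $r$-Lipschitz and thus lie in $\mathcal{F}_{\mathrm{Lip}}(\R^n)$. Since $\mathcal{F}_{\mathrm{Lip}}(\R^n)\subset L^1(\nu) \cap \mathcal{F}(\R^n)$ (any Lipschitz function is $\nu$-integrable because $\nu \in \mathcal{P}_1(\R^n)$), one inequality is trivial; the content is the reverse one. Fix $f \in L^1(\nu) \cap \mathcal{F}(\R^n)$; we may assume $\int e^{-f^*}\,dm > 0$, else its contribution to the supremum is $-\infty$ and nothing is to be proved. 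Set
\[
V := \int (-f)\,d\nu + \log \int e^{-f^*}\,dm \in \R \cup\{+\infty\},
\]
and our goal is to exhibit a sequence in $\mathcal{F}_{\mathrm{Lip}}(\R^n)$ whose functional values converge to $V$.

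First I would check that $f_r$ belongs to $\mathcal{F}_{\mathrm{Lip}}(\R^n)$ for $r$ large enough. Since $f \in \mathcal{F}(\R^n)$ is proper convex and lsc, it admits an affine minorant $f(y) \geq a\cdot y + b$. A direct computation (using $a\cdot y \geq a\cdot x - |a||x-y|$) gives $f_r(x) \geq a\cdot x + b$ for every $r \geq |a|$, so $f_r$ is finite-valued, and being convex and $r$-Lipschitz it lies in $\mathcal{F}_{\mathrm{Lip}}(\R^n)$.

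Next, I would pass the two terms of the functional to the limit $r \to \infty$. For the linear term: Lemma \ref{lem:infconv} yields $f_r \uparrow f$ pointwise, so $-f_r \downarrow -f$. Fixing $r_0 \geq |a|$, for $r\geq r_0$ one has $-f \leq -f_r \leq -f_{r_0}$, and both $-f$ (by hypothesis) and $-f_{r_0}$ (Lipschitz) belong to $L^1(\nu)$; dominated convergence thus gives $\int (-f_r)\,d\nu \to \int(-f)\,d\nu$. For the log-Laplace term: by the explicit formula $f_r^* = f^* + \chi_{B_r}$ from Lemma \ref{lem:infconv}, we have $e^{-f_r^*} = e^{-f^*}\mathbf{1}_{B_r}$, hence $\int e^{-f_r^*}\,dm = \int_{B_r} e^{-f^*}\,dm$, which by monotone convergence increases to $\int e^{-f^*}\,dm > 0$. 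Continuity of $\log$ on $(0,+\infty)$ then yields $\log \int e^{-f_r^*}\,dm \to \log \int e^{-f^*}\,dm$.

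Combining the two limits, the value of the functional at $f_r$ converges to $V$, which shows that the supremum over $L^1(\nu)\cap \mathcal{F}(\R^n)$ coincides with that over $\mathcal{F}_{\mathrm{Lip}}(\R^n)$. The main (mild) obstacle is the bookkeeping when $\int e^{-f^*}\,dm$ is either zero or very small for small $r$; this is handled respectively by discarding the trivial case upfront and by noting that once $r$ is large enough the integrals are bounded away from zero, so the $\log$ is continuous at the limit.
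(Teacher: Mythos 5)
Your proof is correct and takes essentially the same route as the paper: approximate $f$ by the inf-convolutions of Lemma \ref{lem:infconv}, use $f_r^* = f^* + \chi_{B_r}$, and pass to the limit via monotone convergence on the log-Laplace term. The paper is a bit shorter because it only bounds the value at $f$ from above by the value at $f_k$ (using $-f \leq -f_k$ directly), so it never needs dominated convergence for the linear term, but your extra step is harmless.
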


\begin{proof}
Consider $f_k(x) = \inf_{y\in \R^n} \{f(y) + k|x-y|\}$, $x \in \R^n$, as in Lemma \ref{lem:infconv}. It holds
\[
\int (-f)\,d\nu + \log \int e^{-f^*} \mathbf{1}_{B_k} \,dm \leq \int (-f_k)\,d\nu + \log \int e^{-f_k^*} \,dm \leq \sup_{g\in \mathcal{F}_{\mathrm{Lip}}(\R^n)} \left\{\int (-g)\,d\nu + \log \int e^{-g^*} \,dm\right\}.
\]
By monotone convergence, and optimizing over $f$, one concludes that 
\[
\sup_{f\in \mathcal{F}(\R^n) \cap L^1(\nu)}\left\{\int (-f)\,d\nu + \log \int e^{-f^*} \,dm \right\} \leq  \sup_{g\in \mathcal{F}_{\mathrm{Lip}}(\R^n)} \left\{\int (-g)\,d\nu + \log \int e^{-g^*} \,dm\right\}.
\]
The converse inequality being obvious, this completes the proof.
\end{proof}

Recall that the notions of symmetry and unconditionality were already defined in the Introduction for functions and for probability measures. Similarly, a measure $m$ on $\R^n$ (not necessarily of unit mass) is said unconditional if it is invariant under all flipping of coordinates: for all non-negative functions $h$ on $\R^n$ it holds
\[
\int h(\ep_1 x_1,\ldots,\ep_nx_n) m(dx_1,\ldots,dx_n) = \int h(x_1,\ldots,x_n) m(dx_1,\ldots,dx_n)
\]
for any $\ep = (\ep_1,\ldots,\ep_n) \in \{-1 ; 1\}^n$. We define similarly symmetric measure.
We will denote by $\mathcal{P}_{s,1}(\R^n)$ (resp. $\mathcal{P}_{u,1}(\R^n)$) the set of symmetric (resp. unconditional) elements of $\mathcal{P}_1(\R^n)$ and by $\mathcal{F}_{u}(\R^n)$ (resp. $\mathcal{F}_{u,\mathrm{Lip}}(\R^n)$) the subset of $\mathcal{F}(\R^n)$ consisting of unconditional functions (resp. Lipschitz and unconditional functions). We define similarly the sets $\mathcal{F}_{s}(\R^n)$ and $\mathcal{F}_{s,\mathrm{Lip}}(\R^n)$.
\begin{proposition}\label{prop:uncond}
If $m$ is log-concave and unconditional and $\nu \in \mathcal{P}_{u,1}(\R^n)$ (resp.  $\mathcal{P}_{s,1}(\R^n)$), the supremum defining $K(\nu | m)$ can be restricted to $\mathcal{F}_{u}(\R^n)$ or $\mathcal{F}_{u,\mathrm{Lip}}(\R^n)$ (resp. $\mathcal{F}_{s}(\R^n)$ or $\mathcal{F}_{s,\mathrm{Lip}}(\R^n)$).
\end{proposition}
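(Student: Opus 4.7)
The plan is to symmetrize any competitor $f$ into a $G$-invariant function $\bar f$, where $G = \{-1,1\}^n$ acts coordinatewise in the unconditional case and $G = \{\pm \mathrm{id}\}$ in the symmetric case, and to verify that this replacement never decreases the objective in the supremum defining $K(\nu|m)$. Given $f \in \mathcal{F}(\R^n) \cap L^1(\nu)$, I would set
$$\bar f(x) = \frac{1}{|G|} \sum_{\epsilon \in G} f(\epsilon \cdot x), \qquad x \in \R^n.$$
This is automatically convex and lower semi-continuous as a positive combination of convex l.s.c. functions, and it is $G$-invariant by construction. A first mild verification is that $\bar f \in \mathcal{F}(\R^n) \cap L^1(\nu)$: since $f$ is $\nu$-a.e.\ finite and $\nu$ is $G$-invariant, each $f \circ \epsilon$ is $\nu$-a.e.\ finite, hence so is $\bar f$, giving both $\bar f \in L^1(\nu)$ and $\mathrm{dom}\,\bar f \neq \emptyset$.

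Next I would compare the two terms in the objective separately. The $G$-invariance of $\nu$ immediately gives $\int f(\epsilon \cdot x)\,d\nu(x) = \int f\,d\nu$ for every $\epsilon \in G$, hence $\int \bar f\,d\nu = \int f\,d\nu$. For the nonlinear term, an elementary change of variables in the definition of the Fenchel conjugate yields $(f \circ \epsilon)^*(y) = f^*(\epsilon \cdot y)$ (using that $\epsilon$ is a self-inverse diagonal orthogonal transformation). Combined with the $G$-invariance of $m$, this gives $\int e^{-(f\circ \epsilon)^*}\,dm = \int e^{-f^*}\,dm$ for every $\epsilon$. The iterated form of Lemma \ref{lem:CEK}, applied to the equal-weight convex combination $\bar f = \frac{1}{|G|}\sum_{\epsilon \in G} f\circ \epsilon$, then yields
$$\int e^{-\bar f^*}\,dm \;\geq\; \prod_{\epsilon \in G}\left(\int e^{-(f\circ \epsilon)^*}\,dm\right)^{1/|G|} = \int e^{-f^*}\,dm.$$
Combining the two relations gives $\int(-\bar f)\,d\nu + \log\int e^{-\bar f^*}\,dm \geq \int(-f)\,d\nu + \log\int e^{-f^*}\,dm$, which shows that the supremum defining $K(\nu|m)$ may be restricted to $\mathcal{F}_u(\R^n)$ (resp.\ $\mathcal{F}_s(\R^n)$).

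To further reduce to the Lipschitz $G$-invariant subclass, I would rerun the infimum-convolution truncation used in the proof of Proposition \ref{prop:Lip}, applied to $f \in \mathcal{F}_u(\R^n)$ (resp.\ $\mathcal{F}_s(\R^n)$). The key observation is that the Lipschitz regularization $f_r(x) = \inf_y\{f(y) + r|x-y|\}$ inherits the $G$-invariance of $f$: indeed $|\epsilon \cdot x - \epsilon \cdot y| = |x-y|$ together with $f(\epsilon \cdot y) = f(y)$ implies, after the substitution $y \mapsto \epsilon \cdot y$, that $f_r(\epsilon \cdot x) = f_r(x)$. The rest of the monotone convergence argument is identical to Proposition \ref{prop:Lip}. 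The only genuine difficulty to watch out for is the subtle interplay between integrability, domain and lower semi-continuity when averaging, which as outlined above is handled cleanly using the $G$-invariance of both $\nu$ and $m$.
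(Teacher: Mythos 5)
Your argument is correct and is essentially the paper's own proof: averaging $f$ over the sign-flip group, using $(f\circ\ep)^*=f^*\circ\ep$ together with the invariance of $\nu$ and $m$, and invoking the convexity of $L(\,\cdot\,|m)$ from Lemma \ref{lem:CEK} to compare $\bar f$ with $f$. Your observation that the infimum-convolution regularization preserves the invariance likewise matches how the paper combines this with Proposition \ref{prop:Lip} for the Lipschitz reduction.
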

\begin{proof} We only treat the unconditional case, the symmetric case being similar and simpler.
Let $\nu \in \mathcal{P}_1(\R^n)$ and $f \in \mathcal{F}(\R^n) \cap L^1(\nu)$. For any $\ep = (\ep_1,\ldots,\ep_n) \in \{-1 ; 1\}^n$, denote by $f_\ep$ the function defined by $f_{\ep} (x) = f(\ep_1 x_1,\ldots,\ep_nx_n)$, $x\in \R^n$, and by $\bar{f} \in \mathcal{F}_u(\R^n)$ the function defined by $\bar{f} =  \frac{1}{2^n} \sum_{\ep \in \{-1,1\}^n} f_\ep$. Since $(f_\ep)^* = (f^*)_\ep$ and the function $L(\,\cdot\,|m)$ is convex by Lemma \ref{lem:CEK}, it follows from the unconditionality of $\nu$ and $m$ that
\begin{align*}
\int (-f)\,d\nu -L(f |m)  & =  \int -\bar{f}\,d\nu -  \frac{1}{2^n} \sum_{\ep \in \{-1,1\}^n} L(f_\ep|m)\\
& \leq \int -\bar{f}\,d\nu - L(\bar{f} |m).
\end{align*}
Therefore, the supremum defining $K(\nu | m)$ can be restricted to $\mathcal{F}_{u}(\R^n)$. The same reasoning together with Proposition \ref{prop:Lip} shows that it can be further reduced to $\mathcal{F}_{u,\mathrm{Lip}}(\R^n)$. 
\end{proof}

Following \cite{CEK15}, we now collect some informations on the domain of $K(\,\cdot\,|\Leb)$:
\begin{proposition}\label{prop:domainK}
 A probability measure $\nu \in \mathcal{P}_1(\R^n)$ satisfies $K(\nu | \Leb)<+\infty$ if and only if $\int x\,\nu(dx)=0$ and the support of $\nu$ is not contained in a hyperplane.
\end{proposition}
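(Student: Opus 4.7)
My plan is to prove the two directions separately. For the converse (``centered and not contained in a hyperplane $\Rightarrow K(\nu | \Leb) < +\infty$'') I would invoke the moment-measure theorem of Cordero-Erausquin and Klartag \cite{CEK15}: under these two assumptions, $\nu$ is realised as the moment measure of a log-concave probability $\eta_o = e^{-V_o}\,dx$ with $V_o$ essentially continuous, and one checks that the supremum defining $K(\nu | \Leb)$ is (essentially) achieved at $f = V_o$ and is therefore finite. The work is then concentrated in the forward (``only if'') direction, which I would prove by contrapositive: in each of the two failure cases, I would exhibit an explicit one-parameter family $\{f_t\} \subset \mathcal{F}(\R^n) \cap L^1(\nu)$ along which $\int (-f_t)\,d\nu - L(f_t | \Leb) \to +\infty$.

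\emph{Non-centered case.} Let $a := \int x\,\nu(dx) \neq 0$. A pure linear function in the direction $a$ has a degenerate Legendre transform, so I would regularise it with a small Lipschitz cone: set $f_t(x) = -t \langle a, x \rangle + \ep|x|$, for $t > 0$ and a fixed $\ep > 0$. A direct computation yields $f_t^*(y) = \chi_{-ta + B_\ep}(y)$, so that $\int e^{-f_t^*}\,dy = \ep^n \mathrm{Vol}(B_1)$ is constant in $t$, whereas $\int (-f_t)\,d\nu = t|a|^2 - \ep \int |x|\,\nu(dx) \to +\infty$ as $t \to +\infty$ (and $f_t \in L^1(\nu)$ because $\nu \in \mathcal{P}_1(\R^n)$).

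\emph{Hyperplane case.} Assume $\mathrm{supp}(\nu) \subset \{x \in \R^n : \langle u, x \rangle = s_0\}$ for some unit vector $u$, and let $P$ denote the orthogonal projection onto $u^\perp$. I would insert a strongly convex parabola along the $u$-direction (harmless since it vanishes $\nu$-a.s.) and a small Lipschitz cone in $u^\perp$:
\[
f_\lambda(x) = \tfrac{\lambda}{2}(\langle u, x \rangle - s_0)^2 + \ep |Px|, \qquad \lambda > 0.
\]
Using the orthogonal splitting $\R^n = \R u \oplus u^\perp$, the two summands depend on complementary variables, so the Legendre transform factorises as $f_\lambda^*(y) = \phi_\lambda^*(\langle u, y\rangle) + \chi_{\{|Py| \leq \ep\}}(y)$ with $\phi_\lambda(s) = \tfrac{\lambda}{2}(s - s_0)^2$. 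A Fubini plus Gaussian integral computation then gives
\[
\log \int e^{-f_\lambda^*}\,dy = \tfrac{\lambda s_0^2}{2} + \tfrac{1}{2}\log(2\pi\lambda) + (n-1)\log\ep + \log \mathrm{Vol}_{n-1}(B_1 \cap u^\perp),
\]
while $\int (-f_\lambda)\,d\nu = -\ep \int |Px|\,\nu(dx) \geq -\ep \int |x|\,\nu(dx)$ is bounded in $\lambda$. Summing and letting $\lambda \to +\infty$ sends $\int(-f_\lambda)\,d\nu - L(f_\lambda|\Leb)$ to $+\infty$.

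The main obstacle is the converse: heuristically the natural maximiser of the variational problem defining $K(\nu|\Leb)$ is the log-density of a log-concave measure whose moment measure is $\nu$, but producing this maximiser rigorously requires the full moment-measure existence and regularity theorem of \cite{CEK15} (or its variational version due to Santambrogio \cite{San16}). By contrast, the forward implication is elementary once one guesses the two test families above, which respectively exploit the failure of centering and the failure of full-dimensional support.
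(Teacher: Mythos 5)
Your proposal is correct and follows the same overall structure as the paper: the ``only if'' direction via explicit test functions, and the ``if'' direction deferred to Cordero-Erausquin--Klartag. The differences are minor but worth recording. For the non-centered case your family $f_t(x)=-t\langle a,x\rangle+\ep|x|$ is a concrete instance of the paper's argument, which adds a linear form $\ell_a$ to an arbitrary Lipschitz convex $f$ and uses translation invariance of Lebesgue measure ($(f+\ell_a)^*(y)=f^*(y-a)$); both computations are valid. For the hyperplane case the paper uses a single degenerate test function ($f(x)=\chi_{\{0\}}(x_1)+\sum_{i\ge2}|x_i|$, after reducing to the linear hyperplane $x_1=0$), whose conjugate has infinite Lebesgue integral in one shot, whereas you keep all quantities finite and let the quadratic stiffness $\lambda\to+\infty$; your version has the small advantage of handling an affine hyperplane directly (in the paper this case is absorbed by the non-centered case), and your Gaussian computation checks out. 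For the converse, the paper simply cites Proposition 12 of \cite{CEK15}; your route through the moment-measure theorem also works but needs two corrections of detail: the (near-)maximizer in the supremum defining $K(\nu|\Leb)$ is $f=V_o^*$, not $f=V_o$ (since $K(\nu|\Leb)=\int(-V_o^*)\,d\nu+\log\int e^{-V_o}\,dx$ by Theorem 8 of \cite{CEK15}), and finiteness of this value additionally requires $\int |V_o^*|\,d\nu<+\infty$, which is Proposition 7 of \cite{CEK15}; with these references supplied, your argument is at the same level of completeness as the paper's.
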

\begin{proof}
We simply sketch the proof of the first implication.
Let $\nu \in \mathcal{P}_1(\R^n)$ and $f$ be a convex and Lipschitz function. Denoting $\ell_a(x) = a\cdot x$, $a,x\in \R^n$ and noticing that $(f+\ell_a)^*(y) = f^*(y-a)$, we get 
\[
K(\nu |\Leb) \geq  a\cdot \int x\,\nu(dx) - \int f\,d\nu + \log \int e^{-f^*(y-a)}\,dy =  a\cdot \int x\,\nu(dx) - \int f\,d\nu + \log \int e^{-f^*(y)}\,dy.
\]
So if $\int x\,\nu(dx) \neq 0$, then taking the supremum over $a$ gives that $K(\nu |\Leb) = +\infty$. 

Suppose now that the support of $\nu$ is included in a hyperplane $H$. Without loss of generality, one can assume that $H$ is the hyperplan $x_1 = 0.$ 
Let $f$ be the function defined by $f(x) = \chi_{\{0\}}(x_1) + \sum_{i=2}^n |x_i|$, $x\in \R^n$. Then, an easy calculation shows that $f^*(y) = \sum_{i=2}^n \chi_{[-1,1]}(y_i)$, $y\in \R^n$. Therefore, 
\[
\int e^{-f^*(y)}\,dy = \int \prod_{i=2}^n \mathbf{1}_{[-1,1]}(x_i)\,dx = +\infty.
\]
On the other hand, $\int f\,d\nu = \int  \sum_{i=2}^n |x_i| \,\nu(dx) <+\infty$, and so $K(\nu|m) = +\infty$.

The proof of the converse implication is much more involved. We refer to Proposition 12 of \cite{CEK15}.
\end{proof}

\subsubsection{An alternative expression}\label{sec:altexpr} In this paragraph, we assume that $m$ is a Borel measure on $\R^n$ such that 
\begin{equation}\label{eq:AssumptionInt}
\int e^{- \beta |x|} \,m(dx) <+\infty
\end{equation}
for some $\beta>0$.
This assumption is clearly satisfied for any log-concave measure on $\R^n.$ It will be convenient to introduce the probability measure $\bar{m}$ defined by $\bar{m}(dx) = \frac{e^{- \beta |x|}}{\int e^{- \beta |y|} \,m(dy) } m(dx).$

Under Assumption \eqref{eq:AssumptionInt}, one can unambiguously extend the definition of $H(\,\cdot\,| m)$ on the set $\mathcal{P}_1(\R^n)$, as follows:
\[
H(\nu| m) = \left\{\begin{array}{cc}  \int  \frac{d\nu}{dm}\log \frac{d\nu}{dm} \,dm & \text{if } \nu \ll m  \\ +\infty & \text{otherwise}  \end{array}\right.\qquad \forall  \nu \in \mathcal{P}_1(\R^n).
\]
To see that this definition makes sense, recall that according to Remark \ref{rem:finite measure}, the relative entropy $H(\nu | \bar{m})$ is well defined, for any $\nu \in \mathcal{P}(\R^n)$. Therefore, using that 
\[
H(\nu | \bar{m}) = H(\nu | m) + \beta \int |x|\,\nu(dx) + \text{constant},
\]
one sees that $H(\nu|m) = \int h\log h \,dm$ makes sense in $\R\cup\{\infty\}$ for any $d\nu = h\,dm \in \mathcal{P}_1(\R^n)$.

The functional $K(\,\cdot\, | m)$ admits another expression involving the so-called maximal correlation cost $\mathcal{T}$ that we shall now define. Given $\nu_1,\nu_2 \in \mathcal{P}_1(\R^n)$, we set
\[
\mathcal{T}(\nu_1,\nu_2) = \inf_{f \in \mathcal{F}(\R^n)} \left\{ \int f\,d\nu_1 + \int f^*\,d\nu_2 \right\}.
\]
Note that the integral of a convex function $f \in \mathcal{F}(\R^n)$ with respect to $\nu \in \mathcal{P}_1(\R^n)$ always makes sense in $\R\cup\{+\infty\}$ since, up to the subtraction of an affine function, $f$ can be assumed to be non-negative.
As already mentioned in the introduction, when $\nu_1,\nu_2 \in \mathcal{P}_2(\R^n)$, then it easily follows from the Kantorovich duality for the $W_2^2$ transport cost (see e.g \cite{Vil09}) that
\[
\mathcal{T}(\nu_1,\nu_2) = \sup \E[X\cdot Y],
\]
where the supremum runs over the set of pairs of random vectors $(X,Y)$ such that $X \sim \nu_1$ and $Y\sim \nu_2$.

\begin{proposition}\label{prop:altK}Under Assumption \eqref{eq:AssumptionInt}, for any $\nu \in \mathcal{P}_1(\R^n)$, it holds 
\[
K(\nu|m) = - \inf_{\eta \in \mathcal{P}_1(\R^n)} \left\{\mathcal{T}(\nu,\eta) + H(\eta | m)\right\},
\]
and the infimum can be restricted to compactly supported $\eta$'s. Moreover, if $\nu$ and $m$ are symmetric (resp. unconditional), then the infimum can be restricted to (compactly supported) elements of $\mathcal{P}_{s,1}(\R^n)$ (resp. $\mathcal{P}_{u,1}(\R^n)$).
\end{proposition}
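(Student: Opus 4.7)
The approach is to combine two instances of convex duality: the Kantorovich-type formula built into the definition of $\mathcal{T}$, and the classical entropy / log-Laplace duality of Proposition \ref{prop:classicalduality}. For the easy direction $K(\nu|m) \geq -\inf_\eta\{\mathcal{T}(\nu,\eta) + H(\eta|m)\}$, I would proceed as follows: for any $g \in L^1(\nu)\cap \mathcal{F}(\R^n)$ and any $\eta \in \mathcal{P}_1(\R^n)$, applying the inequality in \eqref{eq:DualH1} with $f = -g^*$ yields $-L(g|m) = \log \int e^{-g^*}\,dm \geq -\int g^*\,d\eta - H(\eta|m)$. Adding $-\int g\,d\nu$, then taking the supremum over $g$ and recognizing that $-\mathcal{T}(\nu,\eta) = \sup_{g}\{-\int g\,d\nu - \int g^*\,d\eta\}$ by definition of $\mathcal{T}$, gives $K(\nu|m) \geq -\mathcal{T}(\nu,\eta) - H(\eta|m)$; the sup over $\eta$ completes this direction.

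For the reverse inequality, I would invoke Proposition \ref{prop:Lip} to restrict the defining supremum of $K(\nu|m)$ to $g \in \mathcal{F}_{\mathrm{Lip}}(\R^n)$. When $g$ is $r$-Lipschitz, $g^*$ equals $+\infty$ outside $\overline{B_r}$ and is bounded below by $-g(0)$ on $\overline{B_r}$, so $0 < \int e^{-g^*}\,dm < \infty$ since $m$ is locally finite. Hence the identity \eqref{eq:DualH2} applies with $f = -g^*$, giving
\[
-L(g|m) = \sup_\eta\left\{-\int g^*\,d\eta - H(\eta|m)\right\}.
\]
Adding $-\int g\,d\nu$, swapping the two suprema, and again using $-\mathcal{T}(\nu,\eta) = \sup_g\{-\int g\,d\nu - \int g^*\,d\eta\}$ yields
\[
K(\nu|m) \;=\; \sup_\eta \sup_{g \in \mathcal{F}_{\mathrm{Lip}}}\left\{-\int g\,d\nu - \int g^*\,d\eta - H(\eta|m)\right\} \;\leq\; \sup_\eta\left\{-\mathcal{T}(\nu,\eta) - H(\eta|m)\right\},
\]
which, combined with the first direction, closes the equality.

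For the compactly supported statement, I would observe that in the identity above $g$ being $r$-Lipschitz forces $\int g^*\,d\eta = +\infty$ unless $\eta$ is supported in $\overline{B_r}$, so the infimum effectively runs over compactly supported measures. The symmetric (resp.\ unconditional) refinement follows analogously: Proposition \ref{prop:uncond} lets me further restrict $g$ to $\mathcal{F}_{s,\mathrm{Lip}}$ (resp.\ $\mathcal{F}_{u,\mathrm{Lip}}$), so that $g^*$ inherits the invariance; then, for any candidate $\eta$, its symmetrization $\bar\eta$ satisfies $\int g^*\,d\bar\eta = \int g^*\,d\eta$ while $H(\bar\eta|m) \leq H(\eta|m)$ by convexity of $H(\,\cdot\,|m)$ combined with the invariance of $m$, so the inner sup is attained on invariant probability measures. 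The main technical subtlety is ensuring $0 < \int e^{-g^*}\,dm < \infty$ so that both directions of Proposition \ref{prop:classicalduality} apply verbatim; this is precisely why the reduction to Lipschitz $g$ provided by Proposition \ref{prop:Lip} is essential, and why the argument does not go through directly on $\mathcal{F}(\R^n)$.
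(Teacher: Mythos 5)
Your proof is correct, and at its core it uses the same mechanism as the paper: expand $-L(g|m)$ by entropy/log-Laplace duality, interchange the two suprema, and recognize $\sup_{g}\bigl\{-\int g\,d\nu-\int g^*\,d\eta\bigr\}=-\mathcal{T}(\nu,\eta)$. The difference lies in how the inner duality step is justified. The paper proves Lemma \ref{lem:log-laplace}, a version of \eqref{eq:DualH2} valid for \emph{every} $\varphi\in\mathcal{F}(\R^n)$, with the supremum taken over $\mathcal{P}_1(\R^n)$ and restricted to compactly supported measures (via truncation to the balls $B_k$), and applies it directly to $\varphi=f^*$ for arbitrary $f$ in the definition of $K$. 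You instead first invoke Proposition \ref{prop:Lip} to reduce to $g\in\mathcal{F}_{\mathrm{Lip}}(\R^n)$; then $\mathrm{dom}(g^*)\subset B_r$, so the classical identity \eqref{eq:DualH2} applies with $f=-g^*$, the optimizer proportional to $e^{-g^*}\,dm$ is automatically compactly supported (hence in $\mathcal{P}_1(\R^n)$, which also repairs the mismatch between the class of measures appearing in \eqref{eq:DualH2} and $\mathcal{P}_1(\R^n)$), and the same observation that $\int g^*\,d\eta=+\infty$ unless $\eta$ is supported in $B_r$ gives the ``compactly supported'' refinement. Your symmetrization of $\eta$ in the invariant cases (using invariance of $g^*$, invariance of $m$ and convexity of $H(\,\cdot\,|m)$) is exactly what the paper delegates to Proposition \ref{prop:uncond} and the last part of Lemma \ref{lem:log-laplace}. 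So the two proofs are essentially the same, with your Lipschitz reduction trading the truncation argument of Lemma \ref{lem:log-laplace} for an equally short verification.

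One small inaccuracy: for $g\in\mathcal{F}_{\mathrm{Lip}}(\R^n)$ it is \emph{not} true in general that $\int e^{-g^*}\,dm>0$. For instance, if $g(x)=a\cdot x+b$ with $|a|\leq r$ and $m$ is absolutely continuous, then $g^*=\chi_{\{a\}}-b$ and $\int e^{-g^*}\,dm=e^{b}\,m(\{a\})=0$. This does not damage the argument: \eqref{eq:DualH2} only requires $\int e^{f}\,dm<+\infty$, and when $\int e^{-g^*}\,dm=0$ such a $g$ contributes $-\infty$ to the supremum defining $K(\nu|m)$ while both sides of the inner duality equal $-\infty$ (since $g^*=+\infty$ $m$-a.e.), so these $g$ can simply be discarded. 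You only need the finiteness $\int e^{-g^*}\,dm\leq e^{g(0)}m(B_r)<+\infty$, which your Lipschitz reduction does provide.
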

\begin{proof}
By definition of $K(\,\cdot\,|m)$ and applying  Lemma \ref{lem:log-laplace} below to $\varphi = f^*$, one gets
\begin{align*}
K(\nu | m) & = \sup_{f \in L^1(\nu) \cap \mathcal{F}(\R^n)} \left\{\int (-f)\,d\nu + \log \int e^{-f^*} \,dm\right\}\\
& = \sup_{f \in L^1(\nu) \cap \mathcal{F}(\R^n)}  \sup_{\eta \in \mathcal{P}_1(\R^n)} \left\{ \int (-f)\,d\nu  + \int (-f^*)\,d\eta - H(\eta|m) \right\}\\
& = \sup_{\eta \in \mathcal{P}_1(\R^n)} \sup_{f \in L^1(\nu) \cap \mathcal{F}(\R^n)} \left\{ \int (-f)\,d\nu  + \int (-f^*)\,d\eta - H(\eta|m) \right\}\\
&= - \inf_{\eta \in \mathcal{P}_1(\R^n)} \left\{\mathcal{T}(\nu,\eta) + H(\eta | m)\right\}.
\end{align*}
Since the supremum in Lemma \ref{lem:log-laplace} can be restricted to compactly supported probability measures, the same is true for the infimum above.
\end{proof}

In the preceding proof, we used the following slight extension of the identity \eqref{eq:DualH2}.
\begin{lemma}\label{lem:log-laplace}For any $\varphi \in \mathcal{F}(\R^n)$, it holds 
\[
\log \int e^{-\varphi}\,dm = \sup_{\nu \in \mathcal{P}_1(\R^n)}\left\{ \int -\varphi\,d\nu - H(\nu|m)\right\},
\]
and the supremum can be restricted to compactly supported $\nu$. Moreover, if $\varphi$ and $m$ are symmetric (resp. unconditional), then the supremum can be restricted to (compactly supported) elements of $\mathcal{P}_{s,1}(\R^n)$ (resp. $\mathcal{P}_{u,1}(\R^n)$).
\end{lemma}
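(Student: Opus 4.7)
The result is essentially a rewriting of the duality formula \eqref{eq:DualH2} that accommodates convex functions $\varphi\in\mathcal{F}(\R^n)$ taking the value $+\infty$ and the extended definition of $H(\,\cdot\,|m)$ on $\mathcal{P}_1(\R^n)$ valid under Assumption \eqref{eq:AssumptionInt}. The plan is to prove the two inequalities separately; the refinements regarding compact support and symmetry will come for free from the approximants used in the lower bound.

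For the upper bound, I start from the pointwise Young inequality $-\varphi(x)\, h(x) \leq e^{-\varphi(x)} + h(x)\log h(x) - h(x)$, valid for all $h \geq 0$ and $\varphi \in \R\cup\{+\infty\}$ with the conventions $0\cdot(+\infty)=0$ and $e^{-\infty}=0$. For $\nu \in \mathcal{P}_1(\R^n)$ absolutely continuous with density $h$ with respect to $m$, integrating against $m$ yields
\[
\int -\varphi\,d\nu \leq \int e^{-\varphi}\,dm + H(\nu|m) - 1,
\]
where every term makes sense in $[-\infty,+\infty]$ because $\varphi$ is minorized by an affine function integrable against $\nu$ and, under Assumption \eqref{eq:AssumptionInt}, $H(\nu|m) \in (-\infty,+\infty]$ on $\mathcal{P}_1(\R^n)$. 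Replacing $\varphi$ by $\varphi - a$ for $a \in \R$ and optimizing over $a$ gives $\int -\varphi\,d\nu - H(\nu|m) \leq \log\int e^{-\varphi}\,dm$; measures $\nu$ not absolutely continuous with respect to $m$ satisfy $H(\nu|m)=+\infty$ and contribute $-\infty$ to the supremum, requiring no separate treatment.

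For the lower bound together with the announced restrictions on the support and symmetry, I construct an explicit approximating sequence. For each $k\geq 1$ such that $Z_k := \int_{B_k} e^{-\varphi}\,dm > 0$, define the compactly supported probability measure
\[
\nu_k := \frac{1}{Z_k}\,\mathbf{1}_{B_k}\,e^{-\varphi}\,m.
\]
Then $\nu_k \in \mathcal{P}_1(\R^n)$, and since $B_k$ is invariant under coordinate reflections and sign flips while $e^{-\varphi}\,m$ inherits the symmetry (resp. unconditionality) of $\varphi$ and $m$, so does $\nu_k$. Writing $h_k := \mathbf{1}_{B_k}\, e^{-\varphi}/Z_k$, on the support of $\nu_k$ one has $\log h_k = -\varphi - \log Z_k$, hence
\[
H(\nu_k|m) = \int h_k \log h_k\, dm = \int -\varphi\,d\nu_k - \log Z_k,
\]
so that $\int -\varphi\,d\nu_k - H(\nu_k|m) = \log Z_k$. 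By monotone convergence $Z_k \uparrow \int e^{-\varphi}\,dm$ as $k\to\infty$, so the right-hand side converges (possibly to $+\infty$) to $\log\int e^{-\varphi}\,dm$, establishing both the lower bound and the reductions of the supremum. The degenerate case $\int e^{-\varphi}\,dm = 0$ corresponds to $\varphi = +\infty$ $m$-a.e., which forces any $\nu \ll m$ to concentrate on $\{\varphi=+\infty\}$ and hence $\int -\varphi\,d\nu = -\infty$, so that both sides equal $-\infty$.

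The main technical point I anticipate is the careful handling of potentially infinite values, in particular the verification that the integrals appearing in the identity $H(\nu_k|m) = \int -\varphi\,d\nu_k - \log Z_k$ are well-defined real numbers. This follows from the observation that on the compact set $B_k$ the convex lsc function $\varphi$ is minorized by an affine function and $t\mapsto t e^{-t}$ is bounded on $\R$, whence $|\varphi|\,e^{-\varphi}$ is bounded on $B_k$ and hence $m$-integrable thanks to $m(B_k)<+\infty$.
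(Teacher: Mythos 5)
Your proof is correct and follows essentially the same route as the paper: the Young-type inequality (as in the proof of Proposition \ref{prop:classicalduality}) for the upper bound, and the truncated Gibbs measures $\nu_k = Z_k^{-1}\mathbf{1}_{B_k}e^{-\varphi}\,m$ with monotone convergence for the lower bound, with the symmetry/unconditionality reductions coming from the invariance of these approximants (a point the paper leaves to the reader). Only a cosmetic slip: $t\mapsto t e^{-t}$ is not bounded on all of $\R$, but since $\varphi$ is bounded below on $B_k$ by its affine minorant and $|t|e^{-t}$ is bounded on half-lines $[c,+\infty)$, your conclusion that $|\varphi|e^{-\varphi}$ is bounded on $B_k$ stands.
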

Note that, since $\varphi$ is convex, the integral $\int -\varphi\,d\nu$ makes sense in $\R\cup\{-\infty\}$ for any $\nu \in \mathcal{P}_1(\R^n).$
\begin{proof} Reasoning as in the proof of \eqref{eq:DualH1} and \eqref{eq:DualH2}, we see that if $\nu \in \mathcal{P}_1(\R^n)$ is such that $H(\nu|m) <\infty$ one has,
\[
\int -\varphi\,d\nu - H(\nu|m) \leq \log \int e^{-\varphi}\,dm
\]
and so taking the supremum over $\nu$, it holds
\[
\sup_{\nu \in \mathcal{P}_1(\R^n)}\left\{ \int -\varphi\,d\nu - H(\nu|m)\right\} \leq \log \int e^{-\varphi}\,dm.
\]
To show the converse inequality, consider $\nu_k(dx) = \frac{1}{Z_k}e^{-\varphi(x)}\mathbf{1}_{B_k}(x)\,m(dx)$, where we recall that $B_k$ is the closed ball of radius $k$ centered at $0$ and $Z_k = \int e^{-\varphi(x)}\mathbf{1}_{B_k}(x)\,m(dx).$ Since $\varphi$ is convex, there exists $a\in \R^n$, $b \in \R$ such that $\varphi(x) \geq a\cdot x + b$. The probability measure $\nu_k$ has  thus a bounded density and is supported on $B_k$, and so  belongs to $\mathcal{P}_1(\R^n).$ Also, $H(\nu_k | m) = \int_{B_k} -\varphi(x)e^{-\varphi(x)}\,dm - \log Z_k$, and the first integral is finite. Therefore, 
\[
\int -\varphi\,d\nu_k - H(\nu_k|m) = \log Z_k \to \log \int e^{-\varphi}\,dm 
\]
as $k\to \infty$, by monotone convergence. The fact that the supremum can be restricted to symmetric or unconditional $\eta$ when $\varphi$ and $m$ are symmetric or unconditional is left to the reader. 
This completes the  proof.
\end{proof}

\subsubsection{Reverse duality}\label{sec:revduality} The functional $K(\,\cdot\,|m)$ is defined as some sort of conjugate of the functional $L(\,\cdot\,|m)$.
In this paragraph, we address the question of the following reverse duality formula: 
\begin{equation}\label{eq:revduality}
\sup_{\nu \in \mathcal{P}_1(\R^n)} \left\{ \int (-f)\,d\nu - K(\nu |m)\right\}= L(f|m),\qquad f\in \mathcal{F}(\R^n),
\end{equation}
and we are looking for conditions on $f$ and $m$ under which \eqref{eq:revduality} holds true.

An easy observation, is that this formula always holds with $\leq$ instead of $=$, under no particular assumptions. 
\begin{proposition}\label{prop:dualtriv}
For any Borel measure $m$ on $\R^n$ and $f \in \mathcal{F}(\R^n)$, it holds 
\[
 \sup_{\nu \in \mathcal{P}_1(\R^n)} \left\{ \int (-f)\,d\nu - K(\nu |m)\right\} \leq L(f|m).
\]
\end{proposition}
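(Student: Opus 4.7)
The statement is really a tautological consequence of the definition of $K(\nu|m)$ as a supremum, with the only mild care needed being the handling of $f$'s that fail to be $\nu$-integrable.

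The plan is to fix an arbitrary $\nu \in \mathcal{P}_1(\R^n)$ and show
\[
\int (-f)\,d\nu - K(\nu|m) \leq L(f|m),
\]
then take the supremum over $\nu$. I would split into two cases according to whether $f \in L^1(\nu)$.

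If $f \in \mathcal{F}(\R^n) \cap L^1(\nu)$, then $f$ is admissible in the supremum defining $K(\nu|m)$, so by definition
\[
K(\nu|m) \geq \int (-f)\,d\nu - L(f|m),
\]
which is exactly the claim after rearrangement. This first case really is one line, and it is the only content of the proposition.

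If on the other hand $f \notin L^1(\nu)$, then I would invoke the standard fact that any $f \in \mathcal{F}(\R^n)$ is bounded below by an affine function $x \mapsto a\cdot x + b$ (e.g.\ by picking any subgradient at a point of $\mathrm{dom}(f)$). Since $\nu \in \mathcal{P}_1(\R^n)$, this lower bound is $\nu$-integrable, and hence $f \notin L^1(\nu)$ forces $\int f\,d\nu = +\infty$, i.e.\ $\int(-f)\,d\nu = -\infty$. Moreover $K(\nu|m) > -\infty$: indeed, for $R$ large enough the function $g(x)=R|x|$ satisfies $g^* = \chi_{B_R}$, so $L(g|m) = -\log m(B_R) < +\infty$ (assuming $m$ is nontrivial; otherwise the conclusion is vacuous since $L(f|m)=+\infty$ automatically), and $g \in L^1(\nu)$, giving a finite admissible value in the defining supremum. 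Hence the left-hand side is $-\infty$ and the inequality is trivial.

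The only small subtlety is the edge case $L(f|m) = -\infty$ combined with $\int(-f)\,d\nu = -\infty$, but this cannot occur in the first case (where the inequality between finite or $+\infty$ quantities is unambiguous) and, in the second case, the conclusion is still $-\infty \leq L(f|m)$ which is automatic. Taking the supremum over $\nu \in \mathcal{P}_1(\R^n)$ completes the proof. I do not expect any genuine obstacle here; this proposition is essentially a restatement of one half of a Fenchel-type duality and serves only as the easy inequality complementing the much harder reverse duality formula \eqref{eq:revduality} discussed in the paragraph that introduces it.
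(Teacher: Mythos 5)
Your proof is correct, but it follows a genuinely different route from the paper's. You split into cases according to whether $f\in L^1(\nu)$: when it is, the bound is indeed immediate because $f$ is admissible in the supremum defining $K(\nu|m)$; when it is not, you use an affine minorant of $f$ to see that $\int(-f)\,d\nu=-\infty$, and the test function $g(x)=R|x|$ (for which $g^*=\chi_{B_R}$, so $L(g|m)=-\log m(B_R)$ is finite for $R$ large as soon as $m\neq 0$, recalling the standing assumption of Section \ref{sec:duality} that $m$ is finite on compact sets) to see that $K(\nu|m)>-\infty$, so that the corresponding term equals $-\infty$ and is harmless; the degenerate case $m=0$ gives $L(f|m)=+\infty$ and is trivial, as you note. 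The paper instead avoids any case distinction by replacing $f$ with its Lipschitz inf-convolutions $f_k(x)=\inf_{y}\{f(y)+k|x-y|\}$ from Lemma \ref{lem:infconv}: these lie in $L^1(\nu)$ for every $\nu\in\mathcal{P}_1(\R^n)$, satisfy $f_k\leq f$ and $\int e^{-f_k^*}\,dm=\int e^{-f^*}\mathbf{1}_{B_k}\,dm$, so that
\[
\int (f_k-f)\,d\nu-\log\int e^{-f_k^*}\,dm\leq -\log\int e^{-f^*}\mathbf{1}_{B_k}\,dm,
\]
and monotone convergence in $k$ yields the claim in one sweep, the combined integrand $\varphi-f$ never producing an indeterminate form. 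What your argument buys is elementarity: it isolates the only possible obstruction (non-$\nu$-integrability of $f$) and shows it degenerates trivially, at the cost of the $\pm\infty$ bookkeeping in the rearrangement, which you handle correctly (including the edge cases $L(f|m)=\pm\infty$). What the paper's approximation buys is uniformity and reuse: the same $f_k$ device reappears in the proofs of Theorems \ref{thm:dualLeb} and \ref{thm:dualitygen}, so introducing it here streamlines the later arguments.
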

In the following, a measure $m$ being fixed, we will denote by $\widetilde{\mathcal{F}}_{\mathrm{Lip}}(\R^n)$ the set of elements of $\mathcal{F}_{\mathrm{Lip}}(\R^n)$ such that $\int e^{-f^*}\,dm\neq 0.$
\begin{proof}
Let $f\in \mathcal{F}(\R^n)$; by definition of $K(\,\cdot\,|m)$, it holds 
\[
\sup_{\nu \in \mathcal{P}_1(\R^n)} \left\{ \int (-f)\,d\nu - K(\nu |m)\right\}  = \sup_{\nu \in \mathcal{P}_1(\R^n)} \inf_{\varphi \in \widetilde{\mathcal{F}}_{\mathrm{Lip}}(\R^n)} \left\{\int (\varphi-f)\,d\nu - \log \int e^{-\varphi^*}\,dm\right\}.
\]
Observe that, for any fixed $\nu \in \mathcal{P}_1(\R^n)$, it holds 
\[
\inf_{\varphi \in \widetilde{\mathcal{F}}_{\mathrm{Lip}}(\R^n)} \left\{\int (\varphi-f)\,d\nu - \log \int e^{-\varphi^*}\,dm\right\} \leq - \log \int e^{-f^*}\,dm.
\]
Indeed, defining $f_k(x) = \inf_{y\in \R^n}\{f(y) +k |x-y|\}$, $x\in \R^n, k\geq 1$, it follows from Lemma \ref{lem:infconv} that $f_k \leq f$, $f_k$ is $k$-Lipschitz, and $\int e^{-f_k^*}\,dm = \int e^{-f^*}\mathbf{1}_{B_k}\,dm$.
Therefore, for $k$ large enough $f_k \in \widetilde{\mathcal{F}}_{\mathrm{Lip}}(\R^n)$ and it holds
\[
\int (f_k-f)\,d\nu - \log \int e^{-f_k^*}\,dm \leq -\log \int e^{-f^*}\mathbf{1}_{B_k}\,dm,
\]
which letting $k \to \infty$ gives the claim.
\end{proof}

The following result shows that \eqref{eq:revduality} holds true at least when $m$ is the Lebesgue measure.
\begin{theorem}\label{thm:dualLeb}
For any $f\in \mathcal{F}(\R^n)$ such that $\int e^{-f^*}\,dx >0$, it holds
\[
\sup_{\nu \in \mathcal{P}_1(\R^n)} \left\{ \int (-f)\,d\nu - K(\nu |\mathrm{Leb})\right\}= L(f|\Leb),
\]
and the supremum can be restricted to compactly supported $\nu$. If $f$ is further assumed to be unconditional (resp. symmetric), then the supremum above can be restricted to unconditional $\mathcal{P}_{u,1}(\R^n)$ (resp. $\mathcal{P}_{s,1}(\R^n)$).
\end{theorem}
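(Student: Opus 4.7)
The inequality $\leq$ is Proposition \ref{prop:dualtriv}, so the task reduces to proving $\sup_\nu\{\int(-f)\,d\nu - K(\nu|\Leb)\}\geq L(f|\Leb)$. If $\int e^{-f^*}\,dx=+\infty$, both sides equal $-\infty$ and there is nothing to prove; assume henceforth $Z:=\int e^{-f^*}\,dx\in(0,+\infty)$, so $L(f|\Leb)=-\log Z$. Inspired by the Fenchel equality case, the natural candidate for the optimizing $\nu$ is the moment measure of the log-concave probability $e^{-f^*}/Z\,dx$, but $f^*$ may fail the essential-continuity hypothesis required by the Cordero-Erausquin--Klartag and Santambrogio characterization of moment measures. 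I would therefore regularize with two parameters.

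For $\varepsilon>0$ and $R>0$, set $\tilde f_{\varepsilon,R}:=f+\chi_{B_R}+\varepsilon|\cdot|^2/2\in\mathcal{F}(\R^n)$ and $V_{\varepsilon,R}:=\tilde f_{\varepsilon,R}^*$. Since $(f+\chi_{B_R})^*(y)=\sup_{|x|\leq R}\{xy-f(x)\}$ is $R$-Lipschitz and $V_{\varepsilon,R}(y)=\inf_z\{(f+\chi_{B_R})^*(z)+|y-z|^2/(2\varepsilon)\}$ is its Moreau envelope, $V_{\varepsilon,R}$ is $C^1$, $R$-Lipschitz and finite on $\R^n$. The probability $\eta_{\varepsilon,R}:=e^{-V_{\varepsilon,R}}/Z_{\varepsilon,R}\,dx$ is log-concave with essentially smooth (indeed $C^1$) minus log density, and its moment measure $\nu_{\varepsilon,R}:=(\nabla V_{\varepsilon,R})_\#\eta_{\varepsilon,R}$ is compactly supported in $B_R$ because $|\nabla V_{\varepsilon,R}|\leq R$ everywhere. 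By the Cordero-Erausquin--Klartag--Santambrogio variational principle, $\eta_{\varepsilon,R}$ minimizes $\eta\mapsto\mathcal{T}(\nu_{\varepsilon,R},\eta)+H(\eta|\Leb)$, so Proposition \ref{prop:altK} yields
\[
K(\nu_{\varepsilon,R}|\Leb)=-\mathcal{T}(\nu_{\varepsilon,R},\eta_{\varepsilon,R})-H(\eta_{\varepsilon,R}|\Leb).
\]
The coupling $(X,Y)=(\nabla V_{\varepsilon,R}(Y),Y)$ with $Y\sim\eta_{\varepsilon,R}$ saturates the Fenchel inequality $X\cdot Y\leq\tilde f_{\varepsilon,R}(X)+V_{\varepsilon,R}(Y)$, giving $\mathcal{T}(\nu_{\varepsilon,R},\eta_{\varepsilon,R})=\int\tilde f_{\varepsilon,R}\,d\nu_{\varepsilon,R}+\int V_{\varepsilon,R}\,d\eta_{\varepsilon,R}$. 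Combining this with $H(\eta_{\varepsilon,R}|\Leb)=-\int V_{\varepsilon,R}\,d\eta_{\varepsilon,R}-\log Z_{\varepsilon,R}$ and using $\tilde f_{\varepsilon,R}=f+\varepsilon|\cdot|^2/2$ on $\mathrm{supp}(\nu_{\varepsilon,R})\subset B_R$, the $V_{\varepsilon,R}$-terms telescope, yielding the clean identity
\[
\int(-f)\,d\nu_{\varepsilon,R}-K(\nu_{\varepsilon,R}|\Leb)=\varepsilon\int\tfrac{|x|^2}{2}\,\nu_{\varepsilon,R}(dx)-\log Z_{\varepsilon,R}\geq -\log Z_{\varepsilon,R}.
\]

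To conclude, $Z_{\varepsilon,R}\to Z$ upon sending first $\varepsilon\downarrow 0$ (so that $V_{\varepsilon,R}\uparrow (f+\chi_{B_R})^*$ and $Z_{\varepsilon,R}\downarrow\int e^{-(f+\chi_{B_R})^*}\,dx$ by dominated convergence) and then $R\to\infty$ (so that $(f+\chi_{B_R})^*\uparrow f^*$ and the integral decreases to $Z$). This yields simultaneously the main equality and the compactly supported refinement. The symmetric (resp. unconditional) refinement follows because $\chi_{B_R}$ and $|\cdot|^2/2$ are symmetric (resp. unconditional), so $\tilde f_{\varepsilon,R}$, $V_{\varepsilon,R}$ and $\nu_{\varepsilon,R}$ inherit whatever such invariance $f$ possesses. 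The principal technical obstacle is verifying the hypotheses of the CEK15--Santambrogio theorem for the regularized $\eta_{\varepsilon,R}$: centeredness of $\nu_{\varepsilon,R}$ via integration by parts (justified by the exponential decay of $e^{-V_{\varepsilon,R}}$ at infinity, itself a consequence of the linear growth of $V_{\varepsilon,R}$), and full-dimensional support of $\nu_{\varepsilon,R}$ from the strict convexity of $\tilde f_{\varepsilon,R}$ on $\mathrm{int}(B_R)\cap\mathrm{dom}(f)$, which has nonempty interior for $R$ large since $Z<\infty$ forces $\mathrm{dom}(f)$ to be full-dimensional.
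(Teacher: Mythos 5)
Your proof is correct and takes a genuinely different, though adjacent, route from the paper's. The paper obtains the reverse inequality by a single application of Theorem~8 of \cite{CEK15} — the subgradient inequality for $L(\cdot\,|\Leb)$ — which yields $K(\nu_{\psi_0}|\Leb)=\int(-\psi_0^*)\,d\nu_{\psi_0}+\log\int e^{-\psi_0}\,dx$ at one stroke for essentially continuous $\psi_0$, and then approximates a general $f$ by $f+\chi_{B_k}$. You instead route through Theorem~\ref{thm:CEKS}(ii) (Santambrogio's variational characterization of moment measures) combined with Proposition~\ref{prop:altK}, recovering the same identity by explicitly saturating Fenchel's inequality with the coupling $Y\sim\eta_{\varepsilon,R}$, $X=\nabla V_{\varepsilon,R}(Y)$. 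This version is arguably more internal to the paper, since those two tools feature prominently elsewhere, whereas CEK15's Theorem~8 is otherwise cited only once. Two remarks. First, the extra $\varepsilon|\cdot|^2/2$ smoothing is harmless but unnecessary: $(f+\chi_{B_R})^*$ is already $R$-Lipschitz, finite and continuous, hence essentially continuous, which is all that Theorem~\ref{thm:CEKS} requires; dropping $\varepsilon$ eliminates the stray error term $\varepsilon\int|x|^2/2\,d\nu_{\varepsilon,R}$ and gives $\int(-f)\,d\nu_R-K(\nu_R|\Leb)=-\log Z_R$ cleanly. Second, the centeredness and non-degeneracy of $\nu_{\varepsilon,R}$ that you flag as the ``principal technical obstacle'' come for free from the ``only if'' direction of Theorem~\ref{thm:CEKS}(i), since $V_{\varepsilon,R}$ is essentially continuous and $\nu_{\varepsilon,R}$ is its moment measure by construction — the integration-by-parts and strict-convexity verifications are redundant. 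You should, however, briefly justify two things used silently in the telescoping: that $\tilde f_{\varepsilon,R}^{**}=\tilde f_{\varepsilon,R}$ (all three summands are lsc convex, and the sum is proper for $R$ large enough since $0\in\mathrm{int}(\mathrm{dom}(f))$ by Lemma~\ref{lem:elem}), and that $\int|\tilde f_{\varepsilon,R}|\,d\nu_{\varepsilon,R}<\infty$ (this is Proposition~7 of \cite{CEK15}, and is needed both for $\mathcal{T}(\nu_{\varepsilon,R},\eta_{\varepsilon,R})$ to be finite and for the subtraction of $\int f\,d\nu_{\varepsilon,R}$ to be legitimate).
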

Below, we will derive Theorem \ref{thm:dualLeb} from the results of \cite{CEK15}. Another independent proof of Theorem \ref{thm:dualLeb} (based on a general Fenchel-Moreau biconjugation theorem) will be given in Section \ref{Sec:Lebesgue}.

We will need the following elementary lemma (also used in \cite{CEK15}):
\begin{lemma}\label{lem:elem} Let $\psi : \R^n \to \R\cup\{+\infty\}$ be some lower semicontinuous convex function such that $\int e^{-\psi}\,dx >0$. 
Then the following propositions are equivalent:
\begin{itemize}
\item[(i)] $\int e^{-\psi}\,dx <+\infty$,
\item[(ii)] There exists $a>0$ and $b \in \R$ such that $\psi(x) \geq a|x|+b$, $x\in \R^n$,
\item[(iii)] The point $0$ belongs to the interior of the set $\mathrm{dom} (\psi^*)$.
\end{itemize}
\end{lemma}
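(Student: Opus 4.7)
The easiest implication is $(ii) \Rightarrow (i)$: if $\psi(x) \geq a|x| + b$ then $\int e^{-\psi}\,dx \leq e^{-b} \int e^{-a|x|}\,dx < \infty$, which is an elementary polar-coordinate computation.

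For $(ii) \Leftrightarrow (iii)$, I would argue by Legendre duality. A direct computation gives $(a|\cdot| + b)^*(y) = -b + \chi_{B_a}(y)$. Therefore $\psi \geq a|\cdot| + b$ pointwise is equivalent, after taking conjugates and using the Fenchel--Moreau theorem (which applies since $\psi$ is convex and lower semicontinuous with $\int e^{-\psi}>0$, hence proper), to $\psi^* \leq -b + \chi_{B_a}$, i.e.\ $\psi^* \leq -b$ on $B_a$. In particular this forces $B_a \subset \mathrm{dom}(\psi^*)$, giving $(iii)$. Conversely, if $0$ lies in the interior of $\mathrm{dom}(\psi^*)$ then there is some ball $B_a$ on which the convex function $\psi^*$ is bounded above, say by some constant $M$ (a convex function that is finite on an open ball is automatically locally bounded above by a routine simplex-support argument). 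Applying Fenchel--Moreau again,
\[
\psi(x) = \psi^{**}(x) = \sup_{y} \{x\cdot y - \psi^*(y)\} \geq \sup_{|y| \leq a} \{x\cdot y - M\} = a|x| - M,
\]
which yields $(ii)$ with $b = -M$.

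The only substantive implication is $(i) \Rightarrow (ii)$ (or equivalently $(i) \Rightarrow (iii)$). I would argue by contraposition: suppose no pair $(a,b)$ with $a>0$ satisfies $\psi \geq a|\cdot| + b$, and derive $\int e^{-\psi}\,dx = +\infty$. Fix a point $x_0$ with $\psi(x_0)<\infty$ and for each unit vector $u\in S^{n-1}$ consider the one-dimensional convex function $g_u(t) = \psi(x_0 + t u)$, $t \geq 0$. By convexity its right-derivative is non-decreasing, so the asymptotic slope $s_u := \lim_{t\to\infty} g_u(t)/t \in (-\infty, +\infty]$ exists. The failure of $(ii)$ should produce a direction $u_0$ with $s_{u_0} \leq 0$: indeed, if every $s_u$ were strictly positive, a compactness argument on $S^{n-1}$ combined with the lower semicontinuity of $u \mapsto s_u$ (an infimum-of-affine-bounds description) would yield a uniform lower slope $a>0$, hence a global linear lower bound.

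The main obstacle is then to turn a single bad direction $u_0$ into non-integrability of $e^{-\psi}$. Along the ray $x_0 + tu_0$ one has $\psi \leq \psi(x_0)$ for all $t \geq 0$ (convex with non-positive asymptotic slope is non-increasing), so $\psi$ is bounded above by some constant $C$ on that whole ray. By convexity, for any small $\delta>0$ the function $\psi$ is also bounded above by $C + \delta'$ on the $n$-dimensional cone $\{x_0 + tu_0 + tv : t\geq 1, |v|\leq \delta\}$, using $\psi(x_0 + t(u_0 + v)) \leq \tfrac12 \psi(x_0 + 2tu_0) + \tfrac12 \psi(x_0 + 2tv)$ and a bound on $\psi$ in a neighborhood of $x_0$ (where $\psi$ is locally bounded since it is finite at $x_0$ and convex). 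This cone has infinite Lebesgue measure, so $\int e^{-\psi}\,dx \geq e^{-C-\delta'}\cdot \infty = \infty$, contradicting $(i)$. This finishes the proof. The technical point I expect to need care with is guaranteeing that $\psi$ is locally bounded near $x_0$ so that the convexity averaging argument on the cone works cleanly; this follows from $\psi$ being a proper convex lsc function finite at an interior-of-domain point.
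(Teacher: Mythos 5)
Your treatment of (ii)$\Rightarrow$(i) and of the equivalence (ii)$\Leftrightarrow$(iii) is correct and essentially the same as the paper's (the paper dualizes $\psi\geq(-b+\chi_{B_a})^*$ directly where you invoke Fenchel--Moreau; same substance). Where you genuinely depart from the paper is (i)$\Rightarrow$(ii): the paper simply quotes Lemma 2.1 of \cite{Kla07}, whereas you propose a self-contained contrapositive argument via recession slopes. That plan is viable, but one step is false as written and two points need the standing hypothesis to be invoked explicitly.

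The false step is the claim that $\psi\leq C+\delta'$ on the solid cone $\{x_0+tu_0+tv:\ t\geq1,\ |v|\leq\delta\}$. Take $\psi(x_1,x_2)=x_2^2$ on $\R^2$, $x_0=0$, $u_0=e_1$: then $\psi\leq0$ on the ray, yet $\psi(t,t\delta)=t^2\delta^2\to\infty$ inside the cone. The convexity identity you display, $\psi(x_0+t(u_0+v))\leq\tfrac12\psi(x_0+2tu_0)+\tfrac12\psi(x_0+2tv)$, only helps when $x_0+2tv$ stays in a fixed ball $B(x_0,r)$ on which $\psi\leq M$, i.e.\ when $t|v|\leq r/2$; what it actually yields is the bound $\tfrac12(C+M)$ on the half-cylinder $\{x_0+su_0+w:\ s\geq0,\ |w|\leq r/2\}$, not on a cone. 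Fortunately that cylinder still has infinite Lebesgue measure, so the contradiction with (i) survives once you replace ``cone'' by ``cylinder''. Two further points to spell out: (a) you need $x_0$ in the interior of $\mathrm{dom}(\psi)$ to have the local bound $\psi\leq M$; such a point exists precisely because $\int e^{-\psi}\,dx>0$ forces $\mathrm{dom}(\psi)$ to have positive Lebesgue measure, hence nonempty interior (this is exactly where the hypothesis enters; recall the paper's remark that the lemma fails for $\psi=\chi_H$). (b) In the reduction ``all $s_u>0$ implies (ii)'', positivity of the recession slope does not give $\psi(x_0+tu)\geq\psi(x_0)+s_ut$ (the difference quotients increase \emph{up to} $s_u$); you need a Dini-type argument: for each $t$ the map $u\mapsto(\psi(x_0+tu)-\psi(x_0))/t$ is lower semicontinuous and nondecreasing in $t$, so the open sets where it exceeds $a/2$ eventually cover $S^{n-1}$, and compactness gives one radius $T$ beyond which a uniform linear lower bound holds, while lower semicontinuity bounds $\psi$ below on $\overline{B}(x_0,T)$. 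Also, $u\mapsto s_u$ is lower semicontinuous as a supremum of lower semicontinuous difference quotients, not by an ``infimum-of-affine-bounds'' description. With these repairs your argument is complete and gives a self-contained alternative to the paper's citation of \cite{Kla07}.
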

Observe that the lemma is no longer true if $\int e^{-\psi}\,dx=0$. For example, if $\psi = \chi_{H}$, for some hyperplan $H$, then (i) is true but (ii) is obviously false. Also, since $\psi^* = \chi_{H^\perp}$, (iii) is also false in this case. 
\begin{proof} It is clear that (ii) implies (i). The implication (i) $\Rightarrow$ (ii) is Lemma 2.1 of \cite{Kla07}.
To see that (ii) $\Rightarrow$ (iii), observe that for all $y \in \R^n$ such that $|y|\leq a$ it holds
\[
\psi^*(y) = \sup_{x\in \R^n} \{x\cdot y - \psi(x)\} \leq \sup_{x\in \R^n} \{x\cdot y - a|x|\} - b = \sup_{r\geq 0} \{r|y| - ar\}-b = - b,
\]
and so $0$ belongs to the interior of $\{x\in \R^n : \psi^*(x)<+\infty\}$.
Finally, let us show that (iii) implies (ii). Assume that there exists $a>0$ such that $\psi^*(y)<+\infty$ for all $y \in B_a$. Being convex, $\psi^*$ is continuous on $B_a$ and so there exists $b\in \R$ such that $\psi^* \leq -b + \chi_{B_a}$. Since $\psi$ is lower semicontinuous, one gets by duality that $\psi(x) \geq (-b+ \chi_{B_a})^* = b+ a|x|$, which completes the proof.
\end{proof}
Recall that the definitions of essentially continuous convex functions and of moment measures are given after Theorem \ref{thm:mainresult}.
\smallskip

\proof[Proof of Theorem \ref{thm:dualLeb}.]
Note that, according to Proposition \ref{prop:dualtriv}, there is nothing to prove if $\int e^{-f^*}\,dx=+\infty$. 

According to Theorem 8 of \cite{CEK15}, if $\psi_0,\psi_1 \in \mathcal{F}(\R^n)$ are such that $0<\int e^{-\psi_0}\,dx<+\infty$ and $0<\int e^{-\psi_1}\,dx<+\infty$ and $\psi_0$ is essentially smooth, then it holds 
\[
\log \int e^{-\psi_0}\,dx-\log \int e^{-\psi_1}\,dx\geq \int (\psi_0^* -\psi_1^*) d\nu_{\psi_0}.
\]
In other words, 
\[
K(\nu_{\psi_0} | \mathrm{Leb}) = \int (-\psi_0^*)\,d\nu_{\psi_0} + \log  \int e^{-\psi_0}\,dx.
\]
Therefore, if $\psi_0$ is essentially continuous and $0<\int e^{-\psi_0}\,dx<+\infty$, then it holds
\[
\sup_{\nu \in \mathcal{P}_1(\R^n)} \left\{ \int (-\psi_0^*)\,d\nu - K(\nu |\mathrm{Leb})\right\} \geq \int (-\psi_0^*)\,d\nu_{\psi_0} - K(\nu_{\psi_0} |\mathrm{Leb}) = -\log \int e^{-\psi_0}\,dx
\]
and so, according to Proposition \ref{prop:dualtriv}, equality \eqref{eq:revduality} is satisfied for $f = \psi_0^*$. In other words, \eqref{eq:revduality} is true for any $f\in \mathcal{F}(\R^n)$ which is essentially continuous and such that $0<\int e^{-f^*}\,dx<+\infty$.

Now let us remove the assumption of essential continuity. Let $f\in \mathcal{F}(\R^n)$ be such that $0<\int e^{-f^*}\,dx<+\infty$ and let us prove \eqref{eq:revduality} in that case. Consider $f^{k}$ defined by
\[
f^{k} = f + \chi_{B_k},\qquad k\geq 1.
\]
Note that, according to Lemma \ref{lem:infconv}, 
\[
(f^k)^*(y) = (f^*)_k(y) = \inf_{x\in\R^n} \{f^*(x) + k |x-y|\},\qquad y\in \R^n.
\]
According to Lemma \ref{lem:elem}, since $\int e^{-f^*}\,dx<+\infty$ it follows that $0$ belongs to the interior of $\mathrm{dom}(f)$. Therefore, for any $k\geq 1$, $0$ also belongs to the interior of $\mathrm{dom}(f^k)$, and so $\int e^{-(f^k)^*}\,dx<+\infty$. Also, since $(f^k)^*$ is finite over $\R^n$, it is continuous on $\R^n$ and thus essentially continuous. Therefore, for every $k\geq 1$, it holds
\begin{align*}
-\log \int e^{-(f_k)^*}\,dx &= \sup_{\nu \in \mathcal{P}_1(\R^n)} \left\{ \int (-f-\chi_{B_k})\,d\nu - K(\nu |\mathrm{Leb})\right\}\\
& =\sup_{\nu \text{ compactly supported}} \left\{ \int (-f-\chi_{B_k})\,d\nu - K(\nu |\mathrm{Leb})\right\}.
\end{align*}
According to Lemma \ref{lem:infconv}, and the dominated convergence theorem (note that $e^{-(f_1)^*}$ is integrable), one gets
\begin{align*}
-\log \int e^{-f^*}\,dx &= \sup_{k\geq 1} -\log \int e^{-(f_k)^*}\,dx\\
& = \sup_{k\geq 1} \sup_{\nu \text{ compactly supported}} \left\{ \int (-f-\chi_{B_k})\,d\nu - K(\nu |\mathrm{Leb})\right\} \\
&=  \sup_{\nu \text{ compactly supported}}  \sup_{k\geq 1}  \left\{ \int (-f-\chi_{B_k})\,d\nu - K(\nu |\mathrm{Leb})\right\} \\
& =  \sup_{\nu \text{ compactly supported}} \left\{ \int (-f)\,d\nu - K(\nu |\mathrm{Leb})\right\}\\
& \leq \sup_{\nu \in \mathcal{P}_1(\R^n)} \left\{ \int (-f)\,d\nu - K(\nu |\mathrm{Leb})\right\}\\
& \leq -\log \int e^{-f^*}\,dx,
\end{align*}
where the last inequality comes from Proposition \ref{prop:dualtriv}. This completes the proof of the reverse duality formula.

Now let us assume that $f \in \mathcal{F}_u(\R^n)$ (the symmetric case is similar) and let us show that the supremum in the reverse duality formula can be restricted to $\mathcal{P}_{u,1}(\R^n)$. 
For any $\ep = (\ep_1,\ldots,\ep_n) \in \{-1 ; 1\}^n$ and $\nu \in \mathcal{P}_1(\R^n)$, denote by $\nu_\ep$ the push forward of $\nu$ under the map $x \mapsto (\ep_1 x_1,\ldots,\ep_nx_n)$, $x\in \R^n$, and consider the unconditional probability measure $\bar{\nu}=  \frac{1}{2^n} \sum_{\ep \in \{-1,1\}^n} \nu_\ep$. It is easily checked that $K(\nu_\ep | \Leb) = K(\nu | \Leb)$, for any $\ep \in \{-1 ; 1\}^n$. Therefore, $f$ being unconditional it holds
\begin{align*}
\int (-f)\,d\nu + K(\nu |\Leb) & =  \int -f\,d\bar{\nu} -  \frac{1}{2^n} \sum_{\ep \in \{-1,1\}^n} K(\nu_\ep| \Leb)\\
& \leq   \int -f\,d\bar{\nu} -  K(\bar{\nu}| \Leb),
\end{align*}
where the inequality follows from the convexity of $\nu \mapsto K(\nu | \Leb)$. This shows that the supremum in the reverse duality formula can be restricted to $ \mathcal{P}_{u,1}(\R^n)$. 
\endproof

It turns out that the conclusion of Theorem \ref{thm:dualLeb} can be extended to general log-concave measures $m$, as shown in the following result whose proof is postponed to Section \ref{sec:proof}.

\begin{theorem}\label{thm:dualitygen}
Suppose that $m$ is an arbitrary absolutely continuous log-concave measure.
For any $f\in \mathcal{F}(\R^n)$ such that $\int e^{-f^*}\,dm >0$, it holds
\[
\sup_{\nu \in \mathcal{P}_1(\R^n)} \left\{ \int (-f)\,d\nu - K(\nu |m)\right\}= L(f|m).
\]
If $m$ and $f$ are further assumed to be unconditional (resp. symmetric), then the supremum above can be restricted to $\mathcal{P}_{u,1}(\R^n)$ (resp. $\mathcal{P}_{s,1}(\R^n)$).
\end{theorem}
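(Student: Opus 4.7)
By Proposition~\ref{prop:dualtriv}, the direction ``$\leq$'' in the reverse duality formula is automatic, so I focus on the converse inequality
\[
L(f|m) \leq \sup_{\nu \in \mathcal{P}_1(\R^n)}\Bigl\{\int(-f)\,d\nu - K(\nu|m)\Bigr\}.
\]
Unfolding $K$ via Proposition~\ref{prop:Lip}, this is equivalent to
\[
L(f|m) \leq \sup_{\nu \in \mathcal{P}_1}\inf_{g \in \widetilde{\mathcal{F}}_{\mathrm{Lip}}(\R^n)}\Phi(g,\nu), \qquad \Phi(g,\nu) := \int(g-f)\,d\nu + L(g|m),
\]
where $\widetilde{\mathcal{F}}_{\mathrm{Lip}}(\R^n)$ denotes the Lipschitz convex functions with $\int e^{-g^*}\,dm > 0$. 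My plan is to apply Sion's minimax theorem to $\Phi$ on $\widetilde{\mathcal{F}}_{\mathrm{Lip}}(\R^n) \times \mathcal{P}_K$ with $\mathcal{P}_K := \{\nu \in \mathcal{P}_1 : \mathrm{supp}(\nu) \subset B_K\}$, and then pass to the limit $K \to \infty$.

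The Sion hypotheses are verified as follows. The set $\mathcal{P}_K$ is weakly compact and convex, while $\widetilde{\mathcal{F}}_{\mathrm{Lip}}$ is convex (Lemma~\ref{lem:CEK} preserves the condition $\int e^{-g^*}\,dm > 0$ under convex combinations). For each fixed $g$, the map $\nu \mapsto \Phi(g,\nu)$ is affine and weakly upper-semicontinuous on $\mathcal{P}_K$, via continuity of $\int g\,d\nu$ for Lipschitz $g$ and lower-semicontinuity of $\int f\,d\nu$ for convex $f$ bounded below on $B_K$; for each fixed $\nu$, the map $g \mapsto \Phi(g,\nu)$ is convex by Lemma~\ref{lem:CEK}. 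To get its lower-semicontinuity in a usable topology I exploit the shift invariance $\Phi(g+c,\nu) = \Phi(g,\nu)$, which follows from $L(g+c|m) = L(g|m) - c$, to reduce to the normalized subset $\widetilde{\mathcal{F}}_{\mathrm{Lip}}^0 = \bigcup_R \mathcal{F}_{R,0}$ where $\mathcal{F}_{R,0} := \{g \in \widetilde{\mathcal{F}}_{\mathrm{Lip}}: g \text{ is $R$-Lipschitz and } g(0) = 0\}$. Each $\mathcal{F}_{R,0}$ is compact in the uniform-on-compacts topology (Arzel\`a--Ascoli), and the bound $g^* \geq -g(0) = 0$ on $\mathrm{dom}(g^*) \subset B_R$ provides the uniform $m$-integrable majorant $e^{-g^*} \leq \mathbf{1}_{B_R}$, yielding continuity of $L(\cdot|m)$ on $\mathcal{F}_{R,0}$ by dominated convergence. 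Equipping $\widetilde{\mathcal{F}}_{\mathrm{Lip}}^0$ with the inductive limit topology, $\Phi(\cdot,\nu)$ becomes continuous, so Sion's theorem applies on $\widetilde{\mathcal{F}}_{\mathrm{Lip}}^0 \times \mathcal{P}_K$ (noting $\inf_{\widetilde{\mathcal{F}}_{\mathrm{Lip}}^0}\Phi = \inf_{\widetilde{\mathcal{F}}_{\mathrm{Lip}}}\Phi$ by shift invariance) and gives
\[
V_K := \sup_{\nu \in \mathcal{P}_K}\inf_{g \in \widetilde{\mathcal{F}}_{\mathrm{Lip}}}\Phi(g,\nu) = \inf_{g \in \widetilde{\mathcal{F}}_{\mathrm{Lip}}}\sup_{\nu \in \mathcal{P}_K}\Phi(g,\nu) = \inf_{g \in \widetilde{\mathcal{F}}_{\mathrm{Lip}}}\bigl\{\sup_{B_K}(g-f) + L(g|m)\bigr\}.
\]

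As $K \to \infty$ the right-hand side increases monotonically to $\inf_g\{\sup(g-f) + L(g|m)\}$, which I claim equals $L(f|m)$. The lower bound $\sup(g-f) + L(g|m) \geq L(f|m)$ is immediate from $g \leq f + \sup(g-f)$, which gives $g^* \geq f^* - \sup(g-f)$ and hence $L(g|m) \geq L(f|m) - \sup(g-f)$; the matching upper bound is attained along $g = f_R$ of Lemma~\ref{lem:infconv}, for which $(f_R)^* = f^* + \chi_{B_R}$ yields $\sup(f_R - f) \leq 0$ and $L(f_R|m) \uparrow L(f|m)$ by monotone convergence. On the other hand, $V_K \leq \sup_{\nu \in \mathcal{P}_1}\inf_g \Phi(g,\nu)$ for every $K$ since $\mathcal{P}_K \subset \mathcal{P}_1$, so passing to the limit gives $L(f|m) \leq \sup_{\nu \in \mathcal{P}_1}\inf_g \Phi(g,\nu)$, which together with Proposition~\ref{prop:dualtriv} completes the proof of the equality. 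The symmetric and unconditional variants follow by running the same argument with $\mathcal{F}_{R,0}$ and $\mathcal{P}_K$ replaced by their intersections with the appropriate symmetry classes; Proposition~\ref{prop:uncond} and its symmetric analogue ensure these restrictions do not alter $K(\nu|m)$ when $m$ shares the symmetry in question. The principal technical hurdle is the verification of Sion's semi-continuity hypotheses on the non-compact set $\widetilde{\mathcal{F}}_{\mathrm{Lip}}$, handled via the inductive limit construction and the dominated convergence argument for $L(\cdot|m)$ on each compact slice.
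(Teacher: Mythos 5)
Your overall architecture (Sion's minimax on a compact set of measures, plus the inf-convolution approximation $f_R$ of Lemma \ref{lem:infconv} to identify the inner value) is the same as the paper's, but the way you organize the truncation hides a genuine gap. The paper first reduces to the case where $\mathrm{dom}(f)$ is compact and $f$ is bounded on it (its third step), applies Sion on $\mathcal{P}(\mathrm{dom}(f))\times\widetilde{\mathcal{F}}_{\mathrm{Lip}}(\R^n)$, and only then removes the restriction by a monotone-convergence argument whose crux is proving that $0<\int e^{-(f+\chi_{D_k})^*}\,dm<+\infty$ for large $k$ (via Lemma \ref{lem:elem} and a Minkowski-sum-of-domains argument with $dm=e^{-V}dx$). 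You instead keep $f$ general, truncate the measure side to $\mathcal{P}_K$, and then assert that $V_K=\inf_g\{\sup_{B_K}(g-f)+L(g|m)\}$ ``increases monotonically to'' $\inf_g\{\sup_{\R^n}(g-f)+L(g|m)\}=L(f|m)$. Monotone pointwise convergence of $K\mapsto\sup_{B_K}(g-f)$ only gives $\lim_K V_K\le\inf_g\{\sup_{\R^n}(g-f)+L(g|m)\}$; the direction you actually need, $\lim_K V_K\ge L(f|m)$, does not follow and is exactly where the hard work sits. To repair it you would note (by your own lower-bound argument applied to $f+\chi_{B_K}$, since $\sup_{B_K}(g-f)=\sup_{\R^n}\bigl(g-(f+\chi_{B_K})\bigr)$) that $V_K\ge L(f+\chi_{B_K}|m)=-\log\int e^{-(f^*)_K}\,dm$, and then prove $\int e^{-(f^*)_K}\,dm\to\int e^{-f^*}\,dm$; since $(f^*)_K\uparrow f^*$ this needs an integrable majorant, i.e.\ the finiteness of $\int e^{-(f+\chi_{B_K})^*}\,dm$ for some $K$ --- precisely the nontrivial claim the paper establishes in its third step and which your write-up never addresses.

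Two further points in your Sion application are shakier than you acknowledge. First, on $\mathcal{P}_K$ the functional $\Phi(g,\nu)$ takes the value $-\infty$ whenever $\int f\,d\nu=+\infty$, so the real-valued statement of Theorem \ref{thm:sion} quoted in the paper does not apply as is; the paper avoids this precisely by restricting to measures supported on a compact set on which $f$ is bounded. Second, your ``inductive limit topology'' on $\bigcup_R\mathcal{F}_{R,0}$ is not shown to make this set a convex subset of a linear topological space as Sion's theorem requires; the paper instead works with the topology of uniform convergence on compacts on all of $\widetilde{\mathcal{F}}_{\mathrm{Lip}}(\R^n)$ and proves the needed lower semicontinuity of $L(\cdot|m)$ there (inequality \eqref{eq:limsup}), using the log-concavity of $m$ through $\int e^{-\alpha|x|}\,dm<+\infty$; your dominated-convergence argument on a fixed slice $\mathcal{F}_{R,0}$ is essentially correct (only lower semicontinuity is needed, and one must also justify $\liminf_k g_k^*\ge g^*$ pointwise via truncated suprema), but it does not by itself give semicontinuity in a framework where the minimax theorem is known to hold. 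These two issues are fixable along the paper's lines, but together with the unjustified limit in $K$ they mean the proposal, as written, does not yet constitute a proof.
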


\section{HWI formulation of functional inverse Santal\'o inequalities}
In this section, we establish dual equivalent versions of the functional inverse Santal\'o inequalities introduced in Definition \ref{def:IS}. These equivalent versions are expressed in terms of entropy (H), Kantorovich transport distance (W) and Fisher information (I).
\begin{remark}
In \cite{FM08a}, \eqref{eq:InvS} is required to hold only for functions $f:\R^n \to \R$ such that $0<\int e^{-f}\,dx<+\infty$, without assumptions on $f^*$. Note that if $f$ satisfies these assumptions, then according to Lemma \ref{lem:elem}, the function $f^*$ is finite on a neighborhood of $0$, and therefore $\int e^{-f^*}\,dx>0$. It is not difficult to see that \eqref{eq:InvS} can then be extended to $f \in \mathcal{F}(\R^n)$ such that $0<\int e^{-f}\,dx$ and $0<\int e^{-f^*}\,dx$, so that the two definitions actually coincide.
\end{remark}

\subsection{Transport-Entropy form of reverse Santal\'o inequalities - Lebesgue version}
\begin{theorem}\label{thm:IS-T1} Let $c>0$. The reverse Santal\'o inequality $\IS_n(c)$ holds if and only if 
\begin{equation}\label{eq:InvS-Trans1}
K(\nu_1 | \mathrm{Leb}) + K(\nu_2 | \mathrm{Leb}) \geq n\log c - \mathcal{T}(\nu_1,\nu_2),
\end{equation}
for all $\nu_1,\nu_2 \in \mathcal{P}_1(\R^n)$ (resp. for all compactly supported $\nu_1,\nu_2$). In the case of the reverse Santal\'o inequality $\IS_{n,u}(c)$ (resp. $\IS_{n,s}(c)$), the same statement holds with the extra condition that $\nu_1,\nu_2$ belong to $\mathcal{P}_{u,1}(\R^n)$ (resp. $\mathcal{P}_{s,1}(\R^n)$).
\end{theorem}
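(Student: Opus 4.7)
The plan is to exploit the dualities between the functionals $L(\,\cdot\,|\Leb)$ and $K(\,\cdot\,|\Leb)$ established in the previous section, together with the elementary identity $L(f^*|\Leb)=-\log\int e^{-f}\,dx$ that holds for every $f\in\mathcal{F}(\R^n)$ (since such an $f$ satisfies $f^{**}=f$). The scheme exactly mimics the Bobkov--Götze argument for classical transport/infimum-convolution equivalences recalled in the introduction, with the twisted duality $(L,K)$ in place of the Log-Laplace/entropy duality.

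For the implication $\IS_n(c)\Rightarrow$ \eqref{eq:InvS-Trans1}, I would fix $\nu_1,\nu_2\in\mathcal{P}_1(\R^n)$ and any $f\in\mathcal{F}(\R^n)\cap L^1(\nu_1)$ with $f^*\in L^1(\nu_2)$, $\int e^{-f}\,dx>0$ and $\int e^{-f^*}\,dx>0$. From the definition of $K$ applied first to $f$ (against $\nu_1$) and then to $f^*$ (against $\nu_2$), I obtain
\[
K(\nu_1|\Leb)\geq -\int f\,d\nu_1+\log\int e^{-f^*}\,dx,\qquad K(\nu_2|\Leb)\geq -\int f^*\,d\nu_2+\log\int e^{-f}\,dx.
\]
Summing and invoking $\IS_n(c)$ on the log-integrals yields
\[
K(\nu_1|\Leb)+K(\nu_2|\Leb)\geq n\log c-\Bigl(\int f\,d\nu_1+\int f^*\,d\nu_2\Bigr).
\]
Then I would take the infimum over admissible $f$; the final point to check is that this restricted infimum coincides with $\mathcal{T}(\nu_1,\nu_2)$. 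Given any $f\in\mathcal{F}(\R^n)$ with $\int f\,d\nu_1+\int f^*\,d\nu_2<\infty$, one can approximate it by $f_\varepsilon=(f+\tfrac{\varepsilon}{2}|\cdot|^2)$ Moreau-regularized at scale $\varepsilon$, which simultaneously makes $f_\varepsilon$ finite on $\R^n$ (forcing $\int e^{-f_\varepsilon}\,dx>0$) and $f_\varepsilon^*$ finite on $\R^n$ (forcing $\int e^{-f_\varepsilon^*}\,dx>0$), while $\int f_\varepsilon\,d\nu_1+\int f_\varepsilon^*\,d\nu_2$ converges to $\int f\,d\nu_1+\int f^*\,d\nu_2$ as $\varepsilon\to 0$. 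This technical approximation is the main obstacle; for compactly supported $\nu_1,\nu_2$ it is straightforward (all quantities are finite and monotone convergence applies), which already delivers the ``resp.~compactly supported'' version of the inequality.

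For the converse implication $\eqref{eq:InvS-Trans1}\Rightarrow \IS_n(c)$, I would use the reverse duality formula of Theorem~\ref{thm:dualLeb}. Pick $f\in\mathcal{F}(\R^n)$ with $\int e^{-f}\,dx>0$ and $\int e^{-f^*}\,dx>0$. Applying Theorem~\ref{thm:dualLeb} to $f$ and to $f^*$ (noting $L(f^*|\Leb)=-\log\int e^{-f}\,dx$), and taking the sup over compactly supported $\nu_1,\nu_2$, I get
\[
-\log\int e^{-f}\,dx-\log\int e^{-f^*}\,dx=\sup_{\nu_1,\nu_2}\Bigl\{-\!\int f\,d\nu_1-\!\int f^*\,d\nu_2-K(\nu_1|\Leb)-K(\nu_2|\Leb)\Bigr\}.
\]
Using \eqref{eq:InvS-Trans1} for these compactly supported $\nu_i$ and the trivial bound $\mathcal{T}(\nu_1,\nu_2)\leq \int f\,d\nu_1+\int f^*\,d\nu_2$, the bracket is bounded above by $-n\log c$, which is exactly $\IS_n(c)$. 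This argument also shows the equivalence between the ``for all $\nu_1,\nu_2\in\mathcal{P}_1(\R^n)$'' and ``for all compactly supported $\nu_1,\nu_2$'' formulations, since only the latter is needed to recover $\IS_n(c)$, while the former follows once $\IS_n(c)$ is established.

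The symmetric and unconditional variants run on the same template with two adjustments: in the forward direction, Proposition~\ref{prop:uncond} allows restricting the supremum defining $K(\nu_i|\Leb)$ to $\mathcal{F}_s(\R^n)$ or $\mathcal{F}_u(\R^n)$ when $\nu_i$ is symmetric/unconditional, so that $\IS_{n,s}(c)$ or $\IS_{n,u}(c)$ may be applied to the test function $f$; in the backward direction, the ``restricted supremum'' part of Theorem~\ref{thm:dualLeb} ensures that it suffices to know \eqref{eq:InvS-Trans1} on symmetric/unconditional $\nu_i$ to reconstruct $L(f|\Leb)$ for symmetric/unconditional $f$, which is precisely the class on which $\IS_{n,s}(c)$ (resp.~$\IS_{n,u}(c)$) must be verified.
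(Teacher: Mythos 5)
Your converse direction is exactly the paper's: apply Theorem \ref{thm:dualLeb} to $f$ and to $f^*$ (with $L(f^*|\Leb)=-\log\int e^{-f}\,dx$ since $f^{**}=f$), bound the bracket using \eqref{eq:InvS-Trans1} together with $\mathcal{T}(\nu_1,\nu_2)\leq\int f\,d\nu_1+\int f^*\,d\nu_2$, and invoke the restricted-supremum part of Theorem \ref{thm:dualLeb} for the symmetric/unconditional variants. The problem is in the forward direction, and it sits precisely at the step you yourself flag as ``the main obstacle''. The theorem asserts \eqref{eq:InvS-Trans1} for \emph{all} $\nu_1,\nu_2\in\mathcal{P}_1(\R^n)$ (this generality is used later, since moment measures need not be compactly supported nor have second moments), but your regularization does not reach it: if $f_\varepsilon$ is the Moreau envelope of $f+\tfrac{\varepsilon}{2}|\cdot|^2$ at scale $\delta>0$, then $f_\varepsilon^*=(f+\tfrac{\varepsilon}{2}|\cdot|^2)^*+\tfrac{\delta}{2}|\cdot|^2$ grows at least quadratically (and so does $f_\varepsilon$ itself, because of the added $\tfrac{\varepsilon}{2}|x|^2$). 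Hence $\int f_\varepsilon^*\,d\nu_2=+\infty$ (and $\int f_\varepsilon\,d\nu_1=+\infty$) whenever $\nu_2$ (resp.\ $\nu_1$) lies in $\mathcal{P}_1(\R^n)\setminus\mathcal{P}_2(\R^n)$; the inequality you obtain for $f_\varepsilon$ is then vacuous and nothing passes to the limit, so the claimed convergence of $\int f_\varepsilon\,d\nu_1+\int f_\varepsilon^*\,d\nu_2$ is simply false outside $\mathcal{P}_2$. The Lipschitz regularization of Lemma \ref{lem:infconv} fails for the same reason (its conjugate picks up $\chi_{B_r}$, which is not $\nu_2$-integrable unless $\nu_2$ is supported in $B_r$). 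So as written you only prove the $\mathcal{P}_2$ (or compactly supported) case of the forward implication.

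The paper closes this hole without approximating $f$ at all, and this is the fix you should adopt: one may assume $K(\nu_1|\Leb)+K(\nu_2|\Leb)<+\infty$ and $\mathcal{T}(\nu_1,\nu_2)<+\infty$ (otherwise \eqref{eq:InvS-Trans1} is trivial); then Proposition \ref{prop:domainK} forces each $\nu_i$ to be centered with support not contained in a hyperplane, so the closed convex hull of the support of $\nu_1$ (resp.\ $\nu_2$) has nonempty interior and, for any competitor $f$ with $f\in L^1(\nu_1)$, $f^*\in L^1(\nu_2)$, is contained in the closure of $\mathrm{dom}(f)$ (resp.\ $\mathrm{dom}(f^*)$). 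Thus $\int e^{-f}\,dx>0$ and $\int e^{-f^*}\,dx>0$ automatically, $\IS_n(c)$ applies to every such $f$, and infimizing directly gives $\mathcal{T}(\nu_1,\nu_2)$ by its very definition, with no density argument needed. A secondary point: in the symmetric/unconditional forward direction the restriction you need is not on the supremum defining $K(\nu_i|\Leb)$ (you only plug a test function into it, which requires no restriction), but on the infimum defining $\mathcal{T}(\nu_1,\nu_2)$: for symmetric (resp.\ unconditional) $\nu_1,\nu_2$ one must check that symmetric (resp.\ unconditional) $f$ suffice there, which follows from the averaging argument underlying Proposition \ref{prop:uncond} (the conjugate of the average of the $f_\ep$ is at most the average of the $(f^*)_\ep$), rather than from Proposition \ref{prop:uncond} itself.
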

\begin{proof} Assume that $\IS_n(c)$ holds and let us show \eqref{eq:InvS-Trans1}. Fix $\nu_1,\nu_2 \in  \mathcal{P}_1(\R^n)$. If $K(\nu_1 | \mathrm{Leb}) + K(\nu_2 | \mathrm{Leb}) = +\infty$ or $ \mathcal{T}(\nu_1,\nu_2) = +\infty$, there is nothing to prove. One can thus assume further that all these quantities are finite.
Let $f\in \mathcal{F}(\R^n)$ be such that $f\in L^1(\nu_1)$, $f^* \in L^1(\nu_2)$ (such $f$ exists since $\mathcal{T}(\nu_1,\nu_2)<+\infty$). Since for $i=1,2$, $K(\nu_i|\Leb) <+\infty$, Proposition \ref{prop:domainK} implies that $\nu_i$ is centered and that its support is not contained in a hyperplane. Therefore, $\overline{\mathrm{co}}(\mathrm{sup})(\nu_i)$ (the closed convex hull of the support of $\nu_i$) has a non empty interior.
Since $\int f\,d\nu_1<+\infty$, one easily concludes that 
\[
\overline{\mathrm{co}}(\mathrm{sup})(\nu_1) \subset \overline{\mathrm{dom} (f)}
\]
and so $f$ is finite on a small ball which implies that $\int e^{-f}\,dx>0$. Similarly $\int e^{-f^*}\,dx>0$. Applying the inequality \eqref{eq:InvS} then gives that 
\[
\int (-f)\,d\nu_1 + \log \int e^{-f^*}\,dx + \int (-f^*)\,d\nu_2 + \log \int e^{-f}\,dx \geq n\log c - \left(\int f \,d\nu_1 + \int f^*\,d\nu_2 \right).
\]
So, by definition of $K(\,\cdot\,|\mathrm{Leb})$, we get
\[
K(\nu_1 | \mathrm{Leb}) + K(\nu_2 | \mathrm{Leb}) \geq n\log c - \left(\int f \,d\nu_1 + \int f^*\,d\nu_2 \right).
\]
Optimizing over all $f \in \mathcal{F}(\R^n)$ such that $f\in L^1(\nu_1)$, $f^* \in L^1(\nu_2)$ yields
\[
K(\nu_1 | \mathrm{Leb}) + K(\nu_2 | \mathrm{Leb}) \geq n\log c - \mathcal{T}(\nu_1,\nu_2).
\]
Conversely assume that \eqref{eq:InvS-Trans1} holds for all compactly supported $\nu_1,\nu_2$. Take $f\in \mathcal{F}(\R^n)$ such that $0<\int e^{-f}\,dx$ and $0<\int e^{-f^*}\,dx$.
Since $\nu_1,\nu_2$ are compactly supported, $\mathcal{T}(\nu_1,\nu_2)$ is finite and it holds
\begin{align*}
\int (-f)\,d\nu_1 - K(\nu_1 | \mathrm{Leb}) + \int (-f^*)\,d\nu_2 - K(\nu_2 | \mathrm{Leb})  &\leq -n\log c - \left(\int f \,d\nu_1 + \int f^*\,d\nu_2 \right) + \mathcal{T}(\nu_1,\nu_2)\\
& \leq -n\log c,
\end{align*}
since by definition $\mathcal{T}(\nu_1,\nu_2) \leq \left(\int f \,d\nu_1 + \int f^*\,d\nu_2 \right)$, for any convex function $f$. Thus optimizing over all compactly supported $\nu_1,\nu_2$, it follows from Theorem \ref{thm:dualLeb} that
\[
-\log \int e^{-f^*}\,dx - \log \int e^{-f}\,dx  \leq -n \log c,
\]
which completes the proof. 
\end{proof}

The following is a straightforward consequence of Theorem \ref{thm:IS-T1} and Proposition \ref{prop:altK}.

\begin{corollary}\label{cor:IS-T1}
Let $c>0$. The reverse Santal\'o inequality $\IS_n(c)$ holds if and only if 
\begin{equation}\label{eq:InvS-Trans3}
\inf_{\eta_1 \in \mathcal{P}_1(\R^n)} \left\{ \mathcal{T}(\nu_1,\eta_1) + H(\eta_1 | \mathrm{Leb})\right\} + \inf_{\eta_2 \in \mathcal{P}_1(\R^n)} \left\{ \mathcal{T}(\nu_2,\eta_2) + H(\eta_2 | \mathrm{Leb})\right\}  \leq -n\log c +\mathcal{T}(\nu_1,\nu_2),
\end{equation}
for all $\nu_1,\nu_2 \in \mathcal{P}_1(\R^n)$ (resp. for all compactly supported $\nu_1,\nu_2$). In the case of the reverse Santal\'o inequality $\IS_{n,u}(c)$ (resp. $\IS_{n,s}(c)$), the same statement holds with the extra condition that $\nu_1,\nu_2,\eta_1,\eta_2$ belong to $\mathcal{P}_{u,1}(\R^n)$ (resp. $\mathcal{P}_{s,1}(\R^n)$).
\end{corollary}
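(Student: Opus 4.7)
The plan is to combine the two results cited. First, I would rewrite the two terms on the left of \eqref{eq:InvS-Trans1} using the alternative formula of Proposition \ref{prop:altK}. Indeed, the Lebesgue measure on $\R^n$ satisfies Assumption \eqref{eq:AssumptionInt}, since $\int_{\R^n} e^{-\beta|x|}\,dx<+\infty$ for every $\beta>0$, so Proposition \ref{prop:altK} applies with $m=\mathrm{Leb}$ and yields, for any $\nu\in\mathcal{P}_1(\R^n)$,
\[
K(\nu\,|\,\mathrm{Leb}) \;=\; -\inf_{\eta\in\mathcal{P}_1(\R^n)}\bigl\{\mathcal{T}(\nu,\eta)+H(\eta\,|\,\mathrm{Leb})\bigr\}.
\]

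Plugging this identity (once for $\nu_1$ and once for $\nu_2$) into the inequality \eqref{eq:InvS-Trans1} of Theorem \ref{thm:IS-T1} and reshuffling signs immediately produces \eqref{eq:InvS-Trans3}. Conversely, \eqref{eq:InvS-Trans3} rewritten through the same identity gives back \eqref{eq:InvS-Trans1}, so the two inequalities are equivalent $\nu_1,\nu_2$ by $\nu_1,\nu_2$. In particular, the restriction to compactly supported $\nu_1,\nu_2$ transfers between the two formulations unchanged.

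For the symmetric and unconditional variants, I would use the last sentence of Proposition \ref{prop:altK}: when $\nu$ and $m$ are symmetric (resp. unconditional), the infimum defining $K(\nu\,|\,m)$ may be restricted to $\mathcal{P}_{s,1}(\R^n)$ (resp. $\mathcal{P}_{u,1}(\R^n)$). Since the Lebesgue measure is itself unconditional (hence symmetric), this shows that when $\nu_1,\nu_2\in\mathcal{P}_{s,1}(\R^n)$ (resp. $\mathcal{P}_{u,1}(\R^n)$) the infimum in $K(\nu_i\,|\,\mathrm{Leb})$ may be taken over $\eta_i$ in the same symmetry class, so the restriction of the infimum in \eqref{eq:InvS-Trans3} to $\mathcal{P}_{s,1}(\R^n)$ (resp. $\mathcal{P}_{u,1}(\R^n)$) is consistent with the symmetric/unconditional versions of Theorem \ref{thm:IS-T1}.

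There is no real obstacle here: the argument is a one-line substitution, and the only things to check are that Assumption \eqref{eq:AssumptionInt} holds for the Lebesgue measure and that the symmetry reductions are compatible. The corollary therefore follows directly from Theorem \ref{thm:IS-T1} and Proposition \ref{prop:altK}.
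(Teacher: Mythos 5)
Your proposal is correct and follows exactly the route the paper intends: the paper states the corollary as a straightforward consequence of Theorem \ref{thm:IS-T1} and Proposition \ref{prop:altK}, which is precisely your substitution of $K(\nu_i\,|\,\mathrm{Leb}) = -\inf_{\eta_i}\{\mathcal{T}(\nu_i,\eta_i)+H(\eta_i\,|\,\mathrm{Leb})\}$ into \eqref{eq:InvS-Trans1}, together with the symmetry/unconditionality reduction in Proposition \ref{prop:altK}. Your checks (Assumption \eqref{eq:AssumptionInt} for Lebesgue measure and compatibility of the symmetry classes) are the only points needing verification, and they are handled correctly.
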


We will now let moment measures enter the game using the following theorem.
\begin{theorem}[Cordero-Erausquin-Klartag/Santambrogio]\label{thm:CEKS}\
\begin{itemize}
\item[(i)] A probability measure $\nu \in \mathcal{P}(\R^n)$ is the moment measure of some log-concave probability measure $\eta_o$ on $\R^n$ such that $d\eta_o = e^{-V_o}\,dx$ for some essentially continuous convex function $V_o:\R^n\to \R\cup \{+\infty\}$ if and only if $\nu \in \mathcal{P}_1(\R^n)$, $\nu$ is centered and its support is not contained in a hyperplane. The function $V_o$ is moreover unique up to translations.
\item[(ii)] If $\nu$ is centered and its support is not contained in a hyperplane, then the probability measure $\eta_o$ is up to translations the unique minimizer of the functional $\eta \mapsto \mathcal{T}(\nu,\eta) + H(\eta | \Leb)$ on $\mathcal{P}_{1}(\R^n)$:
\[
\inf_{\eta \in \mathcal{P}_1(\R^n)}\{\mathcal{T}(\nu,\eta) + H(\eta | \Leb)\} = \mathcal{T}(\nu,\eta_o) + H(\eta_o | \Leb).
\]
\item[(iii)] Moreover, if $\nu \in \mathcal{P}_{u,1}(\R^n)$ (resp. $\mathcal{P}_{s,1}(\R^n)$) then $\eta_o \in \mathcal{P}_{u,1}(\R^n)$ (resp. $\mathcal{P}_{s,1}(\R^n)$).

\end{itemize}
\end{theorem}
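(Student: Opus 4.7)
The plan is to follow Santambrogio's variational approach \cite{San16} rather than the potential-theoretic derivation of \cite{CEK15}. The central object is the functional
\[
F_\nu(\eta) := \mathcal{T}(\nu,\eta) + H(\eta|\Leb),\qquad \eta \in \mathcal{P}_1(\R^n),
\]
whose infimum equals $-K(\nu|\Leb)$ by Proposition \ref{prop:altK}. Part (ii) asserts that this infimum is attained precisely at (the translation class of) the log-concave measure whose moment measure is $\nu$, so all three parts should follow from a careful existence and regularity analysis of the minimizer.

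For the necessity in (i), given $\nu = \nabla V_{\#}\eta_o$ with $d\eta_o=e^{-V}\,dx$ and $V\in\mathcal{F}(\R^n)$ essentially continuous, I would prove that $\nu$ is centered by writing $\int x\,\nu(dx) = \int \nabla V \, e^{-V}\,dx = -\int \nabla(e^{-V})\,dx$ and applying the divergence theorem on $\mathrm{dom}(V)$: essential continuity guarantees that $e^{-V}$ vanishes on $\partial \mathrm{dom}(V)$ away from an $\mathcal{H}_{n-1}$-negligible set, so the boundary flux is zero. Non-concentration on a hyperplane follows contrapositively: if the support of $\nu$ lay in $\{a\cdot x = b\}$ then $a\cdot\nabla V\equiv b$ on $\mathrm{dom}(V)$, contradicting integrability of $e^{-V}$. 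For sufficiency I would use the direct method on $F_\nu$: lower semicontinuity of $\mathcal{T}(\nu,\cdot)$ under weak convergence is inherited from Kantorovich duality (supremum of continuous linear functionals), lower semicontinuity of $H(\cdot|\Leb)$ on $\mathcal{P}_1(\R^n)$ is classical, and tightness of a minimizing sequence follows from $K(\nu|\Leb)<\infty$ together with Proposition \ref{prop:domainK}. At a minimizer $\eta_o$ with density $h_o$, perturbing along mean-zero, compactly supported signed measures yields the Euler--Lagrange identity $\log h_o + \psi = c$, where $\psi$ is the optimal convex Kantorovich potential such that $\nabla\psi_{\#}\eta_o = \nu$ (Brenier's theorem); thus $V := \psi + c$ satisfies $d\eta_o = e^{-V}\,dx$ and $\nu$ is the moment measure of $\eta_o$. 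Part (ii) is then built into this very identification.

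The hard part will be showing essential continuity of $V = \psi + c$ at the optimizer and uniqueness up to translation. For the first, one must rule out the possibility that $\psi$ jumps to $+\infty$ across a positive $\mathcal{H}_{n-1}$ piece of $\partial \mathrm{dom}(\psi)$: this is done by constructing an explicit variation of $\eta_o$ supported near such a ``wall'' and checking that it strictly decreases $F_\nu$, contradicting optimality. This is precisely the delicate regularity point that makes the proof of \cite{CEK15} technical. Uniqueness up to translations follows from strict convexity of $H(\cdot|\Leb)$ combined with the invariance $F_\nu(\eta(\cdot - a)) = F_\nu(\eta)$, which uses $\int x\,\nu(dx) = 0$.

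For (iii), I would use a symmetrization argument. If $\nu$ is unconditional then for each $\varepsilon \in \{-1,1\}^n$ the coordinate-flip map $T_\varepsilon$ satisfies $(T_\varepsilon)_{\#}\nu = \nu$, and since both $\mathcal{T}$ and $H(\cdot|\Leb)$ are invariant under the simultaneous isometry $T_\varepsilon$, the measure $(T_\varepsilon)_{\#}\eta_o$ is again a minimizer of $F_\nu$. Uniqueness up to translation then forces $(T_\varepsilon)_{\#}\eta_o = \eta_o(\cdot - a_\varepsilon)$ for some $a_\varepsilon \in \R^n$; normalizing the representative to have zero barycenter forces $a_\varepsilon = 0$ for every $\varepsilon$, so $\eta_o$ is itself unconditional. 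The symmetric case is identical, using only $T_{-\mathbf{1}}$.
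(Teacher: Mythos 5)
The paper does not actually give a proof of items (i) and (ii): it cites \cite{CEK15} for (i) and \cite{San16} for (ii), and deduces (iii) from the second part of Proposition~\ref{prop:altK}. Your attempt reconstructs the proof of (i)--(ii) along Santambrogio's variational lines, which is a reasonable thing to try, but one step is genuinely wrong, and it is the one you flag as routine.

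Your uniqueness argument relies on ``strict convexity of $H(\cdot|\Leb)$ combined with the translation invariance of $F_\nu$.'' The problem is that $F_\nu(\eta) = \mathcal{T}(\nu,\eta) + H(\eta|\Leb)$ is \emph{not} a convex functional of $\eta$. Indeed, $\mathcal{T}(\nu,\cdot) = \inf_{f\in\mathcal{F}(\R^n)}\bigl\{\int f\,d\nu + \int f^*\,d\eta\bigr\}$ is an infimum of affine functionals of $\eta$, hence \emph{concave}; equivalently, in the primal formulation, mixing $\eta_1$ and $\eta_2$ enlarges the admissible set of couplings with $\nu$, so $\mathcal{T}(\nu,\lambda\eta_1 + (1-\lambda)\eta_2) \geq \lambda\mathcal{T}(\nu,\eta_1)+(1-\lambda)\mathcal{T}(\nu,\eta_2)$. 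So $F_\nu$ is the sum of a concave term and a strictly convex term, and a midpoint of two minimizers has no reason to strictly decrease $F_\nu$. The uniqueness up to translations in \cite{CEK15} and \cite{San16} comes from a different mechanism (uniqueness of the convex potential solving the associated Monge--Amp\`ere/moment-measure equation, respectively the dual minimization in $f$ of the \emph{convex} functional $f\mapsto\int f\,d\nu + L(f|\Leb)$), not from convexity of $F_\nu$. A related subtlety: $\mathcal{T}(\nu,\cdot)$, being concave, is upper rather than lower semicontinuous for weak convergence, so your direct-method compactness step also needs to be re-examined (in practice one works with second-moment control and $W_2$ compactness, or, as in \cite{CEK15}, one minimizes in the $f$-variable where the functional is convex and lower semicontinuous). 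The rest of your sketch is sound: the centering of $\nu$ via the divergence theorem and essential continuity, the non-concentration on a hyperplane, and the Euler--Lagrange identification of the potential are all correct in spirit; and your symmetrization argument for (iii) (apply $T_\varepsilon$, use the uniqueness in (ii), normalize the barycenter) is a valid alternative to the paper's deduction of (iii) from Proposition~\ref{prop:altK}, both giving the same conclusion.
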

In the preceding result, Item (i) is due to Cordero-Erausquin and Klartag  \cite{CEK15} and Item (ii) to Santambrogio \cite{San16}. Item (iii) is an immediate consequence of the second part of Proposition \ref{prop:altK}.

\begin{corollary}\label{cor:IS-T2}
Let $c>0$; the following propositions are equivalent:
\begin{itemize}
\item[(i)] Inequality $\IS_n(c)$ holds.
\item[(ii)] For all log-concave probability measures $\eta_1,\eta_2$ on $\R^n$ such that, for $i=1,2$, $d\eta_i = e^{-V_i}\,dx$ for some essentially continuous convex function $V_i:\R^n\to \R\cup \{+\infty\}$, it holds
\begin{equation}\label{eq:IS-T2}
\mathcal{T}(\nu_1,\eta_1) + H(\eta_1 | \Leb) + \mathcal{T}(\nu_2,\eta_2) + H(\eta_2 | \Leb) \leq  -n\log c +\mathcal{T}(\nu_1,\nu_2),
\end{equation}
where $\nu_1,\nu_2$ are the moment measures of $\eta_1$ and $\eta_2$. 
\item[(iii)] For all log-concave probability measures $\eta_1,\eta_2$ on $\R^n$ such that, for $i=1,2$, $d\eta_i = e^{-V_i}\,dx$ for some essentially continuous convex function $V_i:\R^n\to \R\cup \{+\infty\}$, it holds
\[
\int V_1^*\,d\nu_1 + \int V_2^* \,d\nu_2 \leq  -n\log c +\mathcal{T}(\nu_1,\nu_2),
\]
where $\nu_1,\nu_2$ are the moment measures of $\eta_1$ and $\eta_2$.
\end{itemize}
Moreover, if $V_i:\R^n\to \R$, then \eqref{eq:IS-T2} reduces to
\begin{equation}\label{eq:IS-T3}
 H(\eta_1 | \Leb) + H(\eta_2 | \Leb) \leq  -n\log (e^2c) +\mathcal{T}(\nu_1,\nu_2).
\end{equation}
The same result holds for inequality $\IS_{n,u}(c)$ (resp. $\IS_{n,s}(c)$) with the extra condition that $\eta_1,\eta_2$ are unconditional (resp. symmetric).
\end{corollary}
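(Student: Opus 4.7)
The plan is to chain together three ingredients: Corollary~\ref{cor:IS-T1} which reformulates $\IS_n(c)$ as \eqref{eq:InvS-Trans3}, the Cordero-Erausquin/Klartag/Santambrogio description of moment measures (Theorem~\ref{thm:CEKS}), and a direct algebraic identity that rewrites $\mathcal{T}(\nu,\eta)+H(\eta|\Leb)$ in terms of the Legendre transform of the potential. With these in hand, the chain (i) $\Leftrightarrow$ (ii) $\Leftrightarrow$ (iii) should fall out almost formally.

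First, I would prove the identity
\[
\mathcal{T}(\nu,\eta) + H(\eta|\Leb) = \int V^*\,d\nu
\]
for any log-concave $\eta=e^{-V}\,dx$ with moment measure $\nu=\nabla V_\#\eta$. The entropy term is $-\int V\,d\eta$. Kantorovich duality applied with the test function $V\in \mathcal{F}(\R^n)$ gives $\mathcal{T}(\nu,\eta) \leq \int V^*\,d\nu + \int V\,d\eta$. The matching lower bound comes from the explicit coupling $(Y,\nabla V(Y))$ with $Y\sim\eta$ (which has marginals $\eta,\nu$) combined with the pointwise Young equality $y\cdot\nabla V(y) = V(y) + V^*(\nabla V(y))$, valid $\eta$-a.s.\ because $\nabla V(y)\in\partial V(y)$. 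This identity shows that (ii) and (iii) are the \emph{same} inequality, so I only need to worry about (i) $\Leftrightarrow$ (ii).

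For (i) $\Rightarrow$ (ii): given essentially continuous $V_i$ with $d\eta_i=e^{-V_i}\,dx$, Theorem~\ref{thm:CEKS}(i) produces the moment measures $\nu_i\in\mathcal{P}_1(\R^n)$, centered with support not contained in a hyperplane, and Theorem~\ref{thm:CEKS}(ii) identifies $\eta_i$ as the minimizer of $\eta\mapsto \mathcal{T}(\nu_i,\eta)+H(\eta|\Leb)$. Corollary~\ref{cor:IS-T1} applied to $(\nu_1,\nu_2)$ thus yields exactly \eqref{eq:IS-T2}. For (ii) $\Rightarrow$ (i), I take arbitrary compactly supported $\nu_1,\nu_2$ and split into two cases: if some $K(\nu_i|\Leb)=+\infty$, then by Proposition~\ref{prop:altK} the corresponding infimum in \eqref{eq:InvS-Trans3} is $-\infty$ and the inequality is trivial; otherwise Proposition~\ref{prop:domainK} forces both $\nu_i$ to be centered with full-dimensional support, and Theorem~\ref{thm:CEKS} again provides $\eta_i$ realizing $\nu_i$ as moment measures and attaining the infima, so that hypothesis (ii) delivers \eqref{eq:InvS-Trans3}. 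The unconditional/symmetric variants require no new ideas since Theorem~\ref{thm:CEKS}(iii) propagates the symmetry.

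Finally, the reduction to \eqref{eq:IS-T3} when $V_i:\R^n\to\R$ will come from integration by parts. From the identity of Step~1,
\[
\mathcal{T}(\nu_i,\eta_i) = \int V_i\,d\eta_i + \int V_i^*\,d\nu_i = \int y\cdot\nabla V_i(y)\,e^{-V_i(y)}\,dy = \int \mathrm{div}(y)\,e^{-V_i(y)}\,dy = n,
\]
where the second equality uses $V^*(\nabla V(y))=y\cdot\nabla V(y)-V(y)$ pushed forward by $\nabla V_\#\eta_i=\nu_i$, and the third is integration by parts with vanishing boundary contribution (legitimate for a probability log-concave density with finite $V_i$). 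Substituting $\mathcal{T}(\nu_i,\eta_i)=n$ for $i=1,2$ in \eqref{eq:IS-T2} gives \eqref{eq:IS-T3}. The main delicate point is justifying the vanishing of boundary terms in the integration by parts for an essentially continuous $V_i$ that is finite on all of $\R^n$; this is a standard fact in the moment-measure literature and should pose no real obstacle, since finiteness of $V_i$ together with the probability-mass normalization controls the decay of $e^{-V_i}$ at infinity well enough for the identity to hold.
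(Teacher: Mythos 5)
Your proposal is correct and follows essentially the same route as the paper's proof. You observe (as the paper does) that the core identity $\mathcal{T}(\nu_i,\eta_i)+H(\eta_i|\Leb)=\int V_i^*\,d\nu_i$ collapses (ii) and (iii) into the same inequality, then chain Corollary~\ref{cor:IS-T1} with Theorem~\ref{thm:CEKS} to get (i)$\Leftrightarrow$(ii), and finally integrate by parts to obtain $\mathcal{T}(\nu_i,\eta_i)=n$ in the finite-potential case; your only real deviation is proving the intermediate identity via a coupling-plus-Young-equality argument rather than the paper's direct Young-inequality argument inside the infimum over $f$, which is an inessential rephrasing since weak duality makes the coupling bound valid for $\mathcal{T}$ as defined. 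One small thing worth noting: for the case split in (ii)$\Rightarrow$(i) to be harmless you implicitly use that $\mathcal{T}(\nu_1,\nu_2)$ on the right of \eqref{eq:InvS-Trans3} is finite for compactly supported measures, and for the final integration-by-parts step the paper gives a one-dimensional absolute-continuity argument on each axis to justify the vanishing boundary terms for general (merely continuous, not $C^1$) $V_i$; you flag this point and defer to the literature, which is acceptable but should be spelled out for a complete proof.
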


\begin{remark}\
\begin{itemize}
\item Note that the equivalence is still true if in (ii) one puts the extra condition that $\nu_1,\nu_2 \in \mathcal{P}_2(\R^n)$.
\item According to Lemma 5 of \cite{CEK15}, for a general $V_i \in \mathcal{F}(\R^n)$, the inequality 
\[
\mathcal{T}(\nu_i,\eta_i) \leq n
\] 
is always true. Therefore, for general $V_i$'s, \eqref{eq:IS-T3} is slightly stronger than \eqref{eq:IS-T2}.
\end{itemize}
\end{remark}

\begin{proof}
The equivalence between (i) and (ii) follows immediately from Corollary \ref{cor:IS-T1} and Theorem \ref{thm:CEKS}.

For $i=1,2$, let $\eta_i$ be a log-concave probability measure on $\R^n$ such that $d\eta_i = e^{-V_i}\,dx$, with $V_i \in \mathcal{F}(\R^n)$, and denote by $\nu_i$ the moment measure of $\eta_i$.
Let us show that (ii) and (iii) are equivalent. According to Proposition 7 of \cite{CEK15} and its proof, $\int |V_i| \,d\eta_i<+\infty$ and $\int |V_i^*|\,d\nu_i<+\infty$. Therefore, for any function $f\in\mathcal{F}(\R^n)$ such that $f \in L^1(\eta_i)$ and $f^* \in L^1(\nu_i)$, it follows from Young inequality that
\begin{align*}
\int f\,d\eta_i + \int f^*\,d\nu_i &= \int f(x) + f^*(\nabla V_i(x))\,\eta_i(dx)\\
& \geq \int x\cdot \nabla V_i(x)\,\eta_i(dx)\\
& = \int V_i(x) + V_i^*(\nabla V_i(x))\,d\eta_i\\
& =  \int V_i\,d\eta_i + \int V_i^*\,d\nu_i.
\end{align*}
Therefore, 
\[
\mathcal{T}(\nu_i,\eta_i) =  \int x\cdot \nabla V_i(x)\,\eta_i(dx) =  \int V_i\,d\eta_i + \int V_i^*\,d\nu_i.
\]
Since $H(\eta_i|\Leb) = -\int V_i\,d\eta_i$, we see that \eqref{eq:IS-T2} amounts to
\[
\int V_1^*\,d\nu_1 + \int V_2^* \,d\nu_2 \leq  -n\log c +\mathcal{T}(\nu_1,\nu_2).
\]
Now let us assume that $V_i:\R^n\to\R$ is finite over $\R^n$. Then 
\begin{equation}\label{eq:IPP}
\mathcal{T}(\nu_i,\eta_i) = \int x\cdot\nabla V_i(x) e^{-V_i(x)}\,dx = - \int \nabla\left(\frac{|x|^2}{2}\right)\cdot \nabla \left(e^{-V_i(x)} \right)\,dx = n,
\end{equation}
where the second equality follows by an integration by parts. This is clear if $V_i$ is continuously differentiable. For a general $V_i$, note that for any $j \in \{1,\ldots,n\}$ and for any fixed $x_1,\ldots,x_{j-1},x_{j+1},\ldots,x_n$ the function $x_j\mapsto x_j e^{-V_i(x_1,\ldots,x_{j-1}, x_j,x_{j+1},\ldots,x_n)}$ is locally Lipschitz and thus absolutely continuous. Therefore, for any $a>0$,
\begin{multline*}
ae^{-V_i(x_1,\ldots,x_{j-1}, a,x_{j+1},\ldots,x_n)} +a e^{-V_i(x_1,\ldots,x_{j-1}, -a,x_{j+1},\ldots,x_n)} 
= \int_{-a}^a e^{-V_i(x)}\,dx_j + \int_{-a}^a x_j\partial_j(V_i)(x)e^{-V_i(x)}\,dx_j.
\end{multline*}
Letting $a\to \infty$, integrating with respect to $x_1,\ldots,x_{j-1},x_{j+1},\ldots,x_n$ and summing over $j$ gives the result.
Therefore, when $V_i:\R^n\to \R$, \eqref{eq:IS-T2} is equivalent to 
\[
 H(\eta_1 | \Leb) + H(\eta_2 | \Leb) \leq  -n\log (e^2c) +\mathcal{T}(\nu_1,\nu_2).
\]
The cases of Inequalities $\IS_{n,u}(c)$ and $\IS_{n,s}(c)$ are straightforward.
\end{proof}

In the next result, we derive from \eqref{eq:IS-T3} an alternative formulation with an information-theoretic flavor. Recall the definition of the entropy power $N(X)$ given at \eqref{eq:entropypower}.
\begin{corollary}\label{cor:entropypower}
If $\IS_n(c)$ holds true then for any random vectors $X_1,X_2$ drawn according to log-concave distributions $\eta_1,\eta_2$ with full support on $\R^n$, it holds
\[
N(X_1) N(X_2) \mathcal{T}(\nu_1,\nu_2)^2 \geq \left(\frac{nc}{2\pi}\right)^2,
\]
where $\nu_1,\nu_2$ are the moment measures of $\eta_1,\eta_2$.
If $\IS_{n,s}(c)$ (resp. $\IS_{n,u}(c)$) holds true, then the inequality above holds with the extra condition that $X_1,X_2$ are symmetric (resp. unconditional).
\end{corollary}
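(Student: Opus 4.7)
The plan is to derive Corollary \ref{cor:entropypower} from the sharpened inequality \eqref{eq:IS-T3} of Corollary \ref{cor:IS-T2} via a standard homogeneity (rescaling) trick, which converts an entropy-plus-constant inequality into the multiplicative entropy-power form. Since $\eta_1, \eta_2$ are log-concave with full support, we may write $d\eta_i = e^{-V_i}\,dx$ with $V_i : \R^n \to \R$ finite, so \eqref{eq:IS-T3} applies and gives
\[
H(\eta_1|\Leb) + H(\eta_2|\Leb) \leq -n\log(e^2 c) + \mathcal{T}(\nu_1,\nu_2).
\]

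Next, I would apply this inequality to the dilated random vectors $\lambda_1 X_1$ and $\lambda_2 X_2$, with $\lambda_1, \lambda_2 > 0$. A direct change of variables shows that if $Y = \lambda X$ with $X \sim e^{-V(x)}\,dx$, then $Y$ has density $e^{-W(y)}$ with $W(y) = V(y/\lambda) + n\log\lambda$; hence $H(\text{law}(\lambda X)|\Leb) = H(\eta|\Leb) - n\log\lambda$, and the moment measure transforms as $\nabla W_\#\text{law}(Y) = \lambda^{-1}\nu$ (the pushforward of $\nu$ by $x\mapsto \lambda^{-1}x$). Because $\mathcal{T}$ is bilinear under scalar dilations, $\mathcal{T}(\lambda_1^{-1}\nu_1, \lambda_2^{-1}\nu_2) = (\lambda_1\lambda_2)^{-1}\mathcal{T}(\nu_1,\nu_2)$. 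Substituting into \eqref{eq:IS-T3} and writing $\mu = \lambda_1\lambda_2$ yields
\[
H(\eta_1|\Leb) + H(\eta_2|\Leb) + n\log(e^2c) \leq n\log\mu + \mu^{-1}\mathcal{T}(\nu_1,\nu_2).
\]

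Now I would optimize the right-hand side over $\mu > 0$. Elementary calculus gives the minimizer $\mu^\star = \mathcal{T}(\nu_1,\nu_2)/n$ (assumed finite and positive; the degenerate cases being trivial) with minimum value $n\log(e\mathcal{T}(\nu_1,\nu_2)/n)$. Rearranging,
\[
H(\eta_1|\Leb) + H(\eta_2|\Leb) \leq n\log\left(\frac{\mathcal{T}(\nu_1,\nu_2)}{e\,n\,c}\right),
\]
which after multiplying by $-2/n$ and exponentiating becomes
\[
\exp\!\left(-\tfrac{2}{n}\bigl[H(\eta_1|\Leb)+H(\eta_2|\Leb)\bigr]\right) \geq \left(\frac{e\,n\,c}{\mathcal{T}(\nu_1,\nu_2)}\right)^{\!2}.
\]
Multiplying both sides by $(2\pi e)^{-2}$ recognises the left-hand side as $N(X_1)N(X_2)$ and delivers the claimed bound $N(X_1)N(X_2)\mathcal{T}(\nu_1,\nu_2)^2 \geq (nc/2\pi)^2$.

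No step here is a real obstacle: the main content is already in Corollary \ref{cor:IS-T2}, and the remaining work is routine scaling and optimization. The only points requiring mild care are (i) justifying $\int x \cdot \nabla V_i \, d\eta_i = n$ so that \eqref{eq:IS-T3} (rather than the weaker \eqref{eq:IS-T2}) applies — but this is exactly the full-support hypothesis used in the proof of Corollary \ref{cor:IS-T2} via \eqref{eq:IPP} — and (ii) checking that the symmetric/unconditional classes are preserved under the dilations $X_i \mapsto \lambda_i X_i$, which is immediate because isotropic scaling commutes with the sign flips defining $\mathcal{P}_{s,1}(\R^n)$ and $\mathcal{P}_{u,1}(\R^n)$. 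Consequently the same argument handles the $\IS_{n,s}(c)$ and $\IS_{n,u}(c)$ variants without change.
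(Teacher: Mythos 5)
Your proof is correct and follows essentially the same route as the paper: apply the inequality \eqref{eq:IS-T3} to dilated measures, use the scaling behaviour of the entropy, the moment measure and $\mathcal{T}$, optimize over the dilation parameter, and exponentiate to obtain the entropy-power form. The only cosmetic difference is your use of two dilation parameters $\lambda_1,\lambda_2$, which collapses to the paper's single parameter since only the product $\lambda_1\lambda_2$ enters.
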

\begin{proof} We only treat the case of the inequality $\IS_n(c)$ the other cases being similar.
Consider log-concave probability measures $d\eta_i=e^{-V_i}\,dx$, $i=1,2$, with $V_i : \R^n \to \R$ a finite valued convex function and let $X_i \sim \eta_i$. For any $\lambda>0$, define $\eta_i^\lambda$ as the pushforward of $\eta_i$ under the map $x\mapsto \lambda x$. Then $\eta_i^\lambda(dx) = e^{-V_i(x/\lambda)} \frac{1}{\lambda ^n}\,dx$, $i=1,2$, and so 
\[
H(\eta_i^\lambda | \Leb) = - n\log \lambda + H(\eta_i |\Leb).
\]
On the other hand, denoting $\nu_i^{\lambda}$ the moment measure of $\eta_i^\lambda$, then it is easily seen that $\nu_i^\lambda = \mathrm{Law} (\frac{1}{\lambda} \nabla V_i(X_i)).$ Therefore, 
\[
\mathcal{T}(\nu_1^\lambda,\nu_2^\lambda) = \frac{1}{\lambda^2} \mathcal{T}(\nu_1,\nu_2).
\]
So, according to \eqref{eq:IS-T3}, it holds
\[
H(\eta_1 |\Leb)+H(\eta_2 |\Leb) \leq n \log (\lambda^2) + \frac{1}{\lambda^2} \mathcal{T}(\nu_1,\nu_2) -n\log (e^2c).
\]
Optimizing over $\lambda$, yields to 
\[
H(\eta_1 |\Leb)+H(\eta_2 |\Leb) \leq n \log \left(\frac{\mathcal{T}(\nu_1,\nu_2)}{n}\right)  -n\log (ec),
\]
which completes the proof.
\end{proof}

\subsection{Transport-Entropy form of reverse Santal\'o inequalities - Gaussian version}\label{sec:symTal}
Recall that the standard Gaussian measure $\gamma_n$ on $\R^n$ satisfies the Talagrand transport-entropy inequality \cite{Tal96}:
\[
\frac{1}{2} W_2^2(\nu,\gamma_n) \leq H(\nu | \gamma_n),\qquad \forall \nu \in \mathcal{P}_2(\R^n).
\]
This inequality admits a symmetric version (which can be easily deduced from the one above using the triangle inequality for the distance $W_2$), which is the following:
\begin{equation}\label{eq:Talsym}
\frac{1}{4} W_2^2(\nu_1,\nu_2) \leq H(\nu_1 | \gamma_n)+H(\nu_2|\gamma_n),\qquad \forall \nu_1,\nu_2 \in \mathcal{P}_2(\R^n).
\end{equation}
The factor $1/4$ is sharp. Indeed, if one takes $\nu_1=\mathcal{N}(-a,1)$ and $\nu_2 = \mathcal{N}(a,1)$, for some $a>0$, then there is equality in \eqref{eq:Talsym}. Recently, it was shown by Fathi \cite{Fathi18} that the factor $1/4$ can be improved to $1/2$ if at least one of the measures $\nu_1,\nu_2$ is centered. This result is a consequence of the functional form of the Santal\'o inequality. Below, we show that reverse Santal\'o Inequalities can be translated in terms of lower bounds for the following functional
\[
\mathcal{G}(\nu_1,\nu_2) = H(\nu_1 | \gamma_n)+ H(\nu_2 | \gamma_n) - \frac{1}{2}W_2^2 (\nu_1,\nu_2),\qquad \forall \nu_1,\nu_2 \in \mathcal{P}_2(\R^n).
\]

\begin{theorem}\label{thm:InvS-Trans}
Let $c>0$. The reverse Santal\'o inequality $\IS_n(c)$ holds if and only if for all $\nu_1,\nu_2 \in  \mathcal{P}_2(\R^n)$, it holds
\begin{equation}\label{eq:InvS-Trans2}
\mathcal{G}(\nu_1,\nu_2) \geq \inf_{\eta_1 \in  \mathcal{P}_2(\R^n)} \mathcal{G}(\eta_1,\nu_2) + \inf_{\eta_2 \in  \mathcal{P}_2(\R^n)} \mathcal{G}(\nu_1,\eta_2)  +n\log (c/(2\pi)).
\end{equation}
In the case of the reverse Santal\'o inequality $\IS_{n,u}(c)$ (resp. $\IS_{n,s}(c)$), the same statement holds with the extra condition that $\nu_1,\nu_2,\eta_1,\eta_2$ belong to $\mathcal{P}_{u,2}(\R^n)$ (resp. $\mathcal{P}_{s,2}(\R^n)$).
\end{theorem}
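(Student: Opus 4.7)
The plan is to translate the Lebesgue-reference equivalence already established in Corollary~\ref{cor:IS-T1} into the Gaussian-reference form \eqref{eq:InvS-Trans2} via a single algebraic identity. For $\nu,\eta \in \mathcal{P}_2(\R^n)$, Kantorovich duality for the quadratic cost yields
\[
\mathcal{T}(\nu,\eta)=\tfrac{1}{2}\!\int|x|^2\,d\nu+\tfrac{1}{2}\!\int|y|^2\,d\eta-\tfrac{1}{2}W_2^2(\nu,\eta),
\]
while the change of reference measure for the relative entropy reads $H(\eta|\Leb)=H(\eta|\gamma_n)-\tfrac{n}{2}\log(2\pi)-\tfrac{1}{2}\int|y|^2\,d\eta$. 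Summing the two cancels the $\int|y|^2\,d\eta$ term; after eliminating $\tfrac{1}{2}\int|x|^2\,d\nu$ through the analogous change-of-reference identity applied to $\nu$, I obtain the key identity
\[
\mathcal{T}(\nu,\eta)+H(\eta|\Leb)=\mathcal{G}(\nu,\eta)-H(\nu|\Leb)-n\log(2\pi),
\]
and, by the same computation applied symmetrically to two measures,
\[
\mathcal{T}(\nu_1,\nu_2)=\mathcal{G}(\nu_1,\nu_2)-H(\nu_1|\Leb)-H(\nu_2|\Leb)-n\log(2\pi).
\]

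Next, I take the infimum over $\eta \in \mathcal{P}_2(\R^n)$ in the first identity for $i=1,2$ and insert both identities into the inequality provided by Corollary~\ref{cor:IS-T1}. The $H(\nu_i|\Leb)$ terms cancel pairwise, and so do the additive $n\log(2\pi)$ constants; what remains on the two sides is exactly \eqref{eq:InvS-Trans2} (the symmetry of $\mathcal{G}$ in its arguments matching the orderings in the statement).

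The one technical point requiring care is that Corollary~\ref{cor:IS-T1} uses infima over $\mathcal{P}_1(\R^n)$, whereas \eqref{eq:InvS-Trans2} uses $\mathcal{P}_2(\R^n)$. For the direction $\IS_n(c)\Rightarrow\eqref{eq:InvS-Trans2}$, the meaningful case is when $\nu_i\in\mathcal{P}_2(\R^n)$ is centered with support not contained in a hyperplane; in this case Theorem~\ref{thm:CEKS} shows that $\inf_{\eta\in\mathcal{P}_1}\{\mathcal{T}(\nu_i,\eta)+H(\eta|\Leb)\}$ is attained at a log-concave measure, which has moments of all orders and thus lies in $\mathcal{P}_2(\R^n)$; hence $\inf_{\mathcal{P}_1}=\inf_{\mathcal{P}_2}$. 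The excluded configurations make \eqref{eq:InvS-Trans2} trivially satisfied: either $\mathcal{G}(\nu_1,\nu_2)=+\infty$ when $\nu_i$ fails to be absolutely continuous with respect to $\gamma_n$, or a direct test against suitably shifted Gaussians forces $\inf_{\eta\in\mathcal{P}_2}\mathcal{G}(\nu_i,\eta)=-\infty$ in the non-centered case. For the converse, I apply \eqref{eq:InvS-Trans2} to compactly supported $\nu_1,\nu_2$ (automatically in $\mathcal{P}_2$), translate back via the same identity, and use $\inf_{\mathcal{P}_1}\leq\inf_{\mathcal{P}_2}$ to recover the compactly supported form of the Corollary~\ref{cor:IS-T1} inequality, which by that corollary is equivalent to $\IS_n(c)$.

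The symmetric and unconditional variants go through by the same argument applied to the restricted forms of Corollary~\ref{cor:IS-T1} and to part~(iii) of Theorem~\ref{thm:CEKS}, which keep the minimizing $\eta$ in the correct symmetry class; the algebraic identity is entirely neutral to these constraints. I anticipate no real obstacle beyond cleanly handling the degenerate configurations of $\nu_i$ in the $\mathcal{P}_1$--$\mathcal{P}_2$ reconciliation; the rest of the argument is essentially a bookkeeping computation.
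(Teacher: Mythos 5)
Your argument is correct and follows essentially the same route as the paper's proof: both reduce to Corollary \ref{cor:IS-T1} and convert it to the Gaussian form through the identities $\mathcal{T}(\nu,\eta)=\tfrac12\int|x|^2\,d\nu+\tfrac12\int|y|^2\,d\eta-\tfrac12 W_2^2(\nu,\eta)$ and $H(\eta|\gamma_n)=H(\eta|\Leb)+\tfrac12\int|y|^2\,d\eta+\tfrac{n}{2}\log(2\pi)$, the only cosmetic differences being that the paper adds $H(\nu_i|\gamma_n)$ at the very last step (so it never has to subtract the possibly infinite $H(\nu_i|\Leb)$ appearing in your ``key identity'') and reconciles the $\mathcal{P}_1$ versus $\mathcal{P}_2$ infima via the compact-support restriction of Proposition \ref{prop:altK} rather than via the log-concave minimizer of Theorem \ref{thm:CEKS}. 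Your explicit handling of the degenerate $\nu_i$ (not absolutely continuous, or non-centered) is at the same level of rigor as the paper's own treatment, so no genuine gap is introduced.
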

\begin{proof}
We only treat the case of the inequality $\IS_n(c)$, the others being similar.  According to Corollary \ref{cor:IS-T1}, the inequality $\IS_n(c)$ is equivalent to 
\[
\inf_{\eta_1 \in \mathcal{P}_2(\R^n)} \left\{ \mathcal{T}(\nu_1,\eta_1) + H(\eta_1 | \mathrm{Leb})\right\} + \inf_{\eta_2 \in \mathcal{P}_2(\R^n)} \left\{ \mathcal{T}(\nu_2,\eta_2) + H(\eta_2 | \mathrm{Leb})\right\}  \leq -n\log c +\mathcal{T}(\nu_1,\nu_2),
\]
for all $\nu_1,\nu_2 \in \mathcal{P}_2(\R^n)$ (we could even restrict $\eta_1,\eta_2,\nu_1,\nu_2$ to compactly supported probability measures).

If $\nu_1,\nu_2 \in \mathcal{P}_2 (\R^n)$, then
\begin{equation}\label{eq:TW1}
\mathcal{T}(\nu_1,\nu_2)= - \frac{1}{2}W_2^2(\nu_1,\nu_2) + \frac{1}{2}\int |x|^2 \,d\nu_1 + \frac{1}{2}\int |x| ^2\,d\nu_2 
\end{equation}
and, if $\eta_1,\eta_2 \in \mathcal{P}_2 (\R^n)$, then
\begin{equation}\label{eq:TW2}
\mathcal{T}(\nu_i,\eta_i)= - \frac{1}{2}W_2^2(\nu_i,\eta_i) + \frac{1}{2}\int |x|^2 \,d\nu_i + \frac{1}{2}\int |x| ^2\,d\eta_i. 
\end{equation}
Also, note that
\begin{equation}\label{eq:HLG}
H(\eta_i | \gamma_n) = H(\eta_i | \mathrm{Leb}) - \int \log \frac{d\gamma_n}{dx}\,d\eta_i = H(\eta_i | \mathrm{Leb}) +  \frac{1}{2}\int |x| ^2\,d\eta_i + \frac{n}{2}\log (2\pi).
\end{equation}
So, we get 
\[
\inf_{\eta_1 \in \mathcal{P}_2(\R^n)} \left\{ - \frac{1}{2}W_2^2(\nu_1,\eta_1)+ H(\eta_1 | \gamma_n)\right\} + \inf_{\eta_2 \in \mathcal{P}_2(\R^n)} \left\{- \frac{1}{2}W_2^2(\nu_2,\eta_2)+ H(\eta_2 | \gamma_n)\right\}  \leq n\log (2\pi/ c) - \frac{1}{2}W_2^2(\nu_1,\nu_2).
\]
So adding $H(\nu_1 | \gamma_n)+ H(\nu_2 | \gamma_n)$, gives the claim.
\end{proof}

\subsection{The deficit in Log-Sobolev and reverse Santal\'o inequalities}\label{sec:HWI} We are now ready to prove our main result (Theorem \ref{thm:mainresult}) which gives an equivalent formulation of functional inverse Santal\'o inequalities in terms of the deficit in the Gaussian logarithmic Sobolev inequality. 

\smallskip

\proof[Proof of Theorem \ref{thm:mainresult}.] Again we only treat the case of the inequality $\IS_n(c)$, the other being similar.
According to Corollary \ref{cor:IS-T2}, the inequality $\IS_n(c)$ holds if and only if for all log-concave measures $\eta_1,\eta_2$ satisfying the assumptions of the theorem, the inequality \eqref{eq:IS-T2} holds true.
Assuming that $\nu_1,\nu_2 \in \mathcal{P}_2(\R^n)$ and using \eqref{eq:TW1}, \eqref{eq:TW2}, \eqref{eq:HLG}, one sees that \eqref{eq:IS-T2} amounts to 
\begin{equation}\label{eq:InvSequiv}
-\frac{1}{2}W_2^2(\nu_1,\eta_1) + H(\eta_1 | \gamma_n) -\frac{1}{2} W_2^2(\nu_2,\eta_2) + H(\eta_2 | \gamma_n) \leq  n\log (2\pi/c) -\frac{1}{2}W_2^2(\nu_1,\nu_2).
\end{equation}
Note that
\[
h_i(x):=\frac{d \eta_i}{d\gamma_n}(x) = (2\pi)^{n/2}e^{-(V_i(x)- \frac{|x|^2}{2})},\qquad \forall x\in \R^n.
\]
This function is differentiable almost everywhere, so
\[
\tilde{I}(\eta_i | \gamma_n) = \int \frac{|\nabla h_i|^2}{h_i}\,d\gamma_n(x) =  \int |\nabla V_i(x)-x|^2 e^{-(V_i(x)- \frac{|x|^2}{2})} e^{- \frac{|x|^2}{2}}\,dx,
\]
where $\tilde{I}$ is defined in \eqref{eq:tildeI}.
On the other hand,
\[
W_2^2(\nu_i,\eta_i) = \int |\nabla V_i(x)-x|^2 e^{-(V_i(x)-  \frac{|x|^2}{2})} e^{- \frac{|x|^2}{2}}\,dx
\]
and so $\tilde{I}(\eta_i | \gamma_n) = W_2^2(\nu_i,\eta_i)$.

Therefore, \eqref{eq:InvSequiv} is equivalent to 
\[
H(\eta_1 |\gamma_n) - \frac{1}{2} \tilde{I}(\eta_1| \gamma_n) + H(\eta_2 |\gamma_n) - \frac{1}{2} \tilde{I}(\eta_2| \gamma_n) \leq n\log (2\pi/ c) - \frac{1}{2}W_2^2(\nu_1,\nu_2).
\]
Since $ \tilde{I}(\eta_i| \gamma_n) \leq  I(\eta_i| \gamma_n)$, $i=1,2$, this completes the proof.
\endproof
 
 \begin{remark}\label{rem:ac}
If $d\eta = e^{-V}\,dx$ with an essentially continuous $V \in \mathcal{F}(\R^n)$, then $\tilde{I}(\eta| \gamma_n)=I(\eta| \gamma_n)$. 
Indeed, according to Lemma \ref{lem:ac} below, the function $e^{-V/2}$ is absolutely continuous on almost every line parallel to an axis and so the same is true for $h^{1/2}$, where $h(x)=(2\pi)^{n/2}e^{-(V(x)- \frac{|x|^2}{2})}$, $x\in \R^n$, is the density of $\eta$ with respect to $\gamma_n$. In particular, we don't loose anything in the last step of the proof of Theorem \ref{thm:mainresult}.
 \end{remark}

\begin{lemma}\label{lem:ac}
Let $W \in \mathcal{F}(\R^n)$ be essentially continuous. Then the function $f=e^{-W}$ is absolutely continuous on almost every line parallel to an axis.
\end{lemma}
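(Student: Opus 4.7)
My plan is to reduce the $n$-dimensional statement to a one-dimensional claim via a Fubini/projection argument, and then prove the 1D case by exploiting convexity and monotonicity.

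First, I would fix an axis, say the $n$-th one, and write $x=(\bar x,t)\in\R^{n-1}\times\R$. For each fixed $\bar x$, the restriction $W_{\bar x}(t):=W(\bar x,t)$ is convex and lower semi-continuous on $\R$, so its effective domain is an interval $I_{\bar x}\subset\R$. The key observation is that whenever $I_{\bar x}$ has a finite endpoint $a(\bar x)$ at which $W_{\bar x}$ is finite, the point $(\bar x,a(\bar x))$ automatically lies in $\partial D$ (no point immediately below it on the line belongs to $D$) and in the set $S=\{x\in\partial D:W(x)<+\infty\}$. Hence the set of ``bad'' $\bar x$, for which $W_{\bar x}$ fails to be continuous (as a map to $\R\cup\{+\infty\}$) at some finite endpoint of its domain, is contained in the projection $\pi(S)$ of $S$ onto $\R^{n-1}$. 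Since $\pi$ is $1$-Lipschitz and $\mathcal{H}_{n-1}(S)=0$ by essential continuity of $W$, the bad set has zero $\mathcal{H}_{n-1}$-measure, hence zero Lebesgue measure in $\R^{n-1}$. For every other $\bar x$, $W_{\bar x}$ is an essentially continuous one-dimensional convex function, and the problem reduces to showing that $t\mapsto e^{-V(t)}$ is absolutely continuous for any such $V$.

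For the 1D case, I would first note that essential continuity forces the effective domain of $V$ to be an \emph{open} interval $(a,b)$, with $V(t)\to+\infty$ at every finite endpoint (by lower semi-continuity); otherwise $V$ would jump from $+\infty$ down to a finite value at a finite endpoint, contradicting continuity into $\R\cup\{+\infty\}$. Consequently $f:=e^{-V}$ vanishes outside $(a,b)$, is continuous on all of $\R$, and is locally Lipschitz on $(a,b)$ (a real-valued convex function is locally Lipschitz on the interior of its domain). I would then split $(a,b)$ at a point $c$ at which $V$ attains or approaches its infimum, so that $f$ is monotone on each of $(a,c]$ and $[c,b)$. On $[a+\ep,c]$ with $\ep>0$, $f$ is already absolutely continuous, so $\int_{a+\ep}^c f'(t)\,dt = f(c)-f(a+\ep)$; letting $\ep\downarrow 0$, continuity of $f$ at $a$ together with monotone convergence (using $f'\geq 0$ on that half) yields $\int_a^c f'(t)\,dt=f(c)-f(a)$, which for a continuous monotone function is the classical characterization of absolute continuity on $[a,c]$. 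A symmetric argument on $[c,b]$ completes the 1D proof.

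The main obstacle I anticipate is this boundary analysis in one dimension: $V'(t)$ typically blows up as $t\to a^+$, so absolute continuity cannot follow from a uniform Lipschitz or bounded-derivative estimate. What rescues the argument is the combination of the monotonicity of $f$ on each side of its maximum with the continuous extension of $f$ by zero at the finite endpoints of $(a,b)$, which lets the fundamental-theorem-of-calculus identity pass to the limit via monotone convergence. The multidimensional reduction itself, being a routine projection-of-Hausdorff-measure computation, should pose no difficulty.
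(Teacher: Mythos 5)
Your proof is correct and follows essentially the same route as the paper: the reduction to one-dimensional slices by projecting the $\mathcal{H}_{n-1}$-null set $S=\{x\in\partial D : W(x)<\infty\}$ onto $\R^{n-1}$ is exactly the paper's argument, and the one-dimensional case is likewise settled by a limiting fundamental-theorem-of-calculus argument as $\ep\to 0$, using that $W\to+\infty$ (so $e^{-W}\to 0$) at a finite endpoint of the domain. The only, harmless, variation is in the last 1D step: the paper writes $e^{-W}$ explicitly as an indefinite integral of $(-W'_r)e^{-W}$, while you use monotonicity of $e^{-W}$ on each side of the minimum of $W$ together with the classical criterion that a monotone function $g$ on $[a,c]$ with $\int_a^c g'=g(c)-g(a)$ is absolutely continuous.
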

\begin{proof}
First let us show the lemma in dimension $n=1$. Let $W \in \mathcal{F}(\R)$ be essentially continuous and not identically $+\infty$ (otherwise there is nothing to prove), and let us show that $f=e^{-W}$ is absolutely continuous on any segment. To fix the idea, one can assume that $\mathrm{dom}(W) = (a,+\infty)$ with $W(x) \to +\infty$ as $x \to a$ (the other cases are similar). According to e.g \cite[Theorem 1.1.9]{Hor}, for any $\varepsilon>0$, the function $W$ satisfies,
\[
W(x) - W(a+\varepsilon) = \int_{a+\varepsilon}^{x} W'_r(u)\,du,\qquad \forall x \in [a+\varepsilon, +\infty),
\]
where $W'_r$ denotes the right derivative of $W$ (which is well defined on $(a,+\infty)$). Fix $b>a$ and take $\varepsilon < b-a$ ; the function $W'_r$ being bounded on $[a+\varepsilon,b]$, one concludes that $W$ is absolutely continuous on  $[a+\varepsilon,b]$ (see e.g \cite[Chap. 5, Theorem 14]{Roy}). The function $W$ being bounded on  $[a+\varepsilon,b]$ and the function $x\mapsto e^{-x}$ being locally Lipschitz, one concludes that $f=e^{-W}$ is absolutely continuous on $ [a+\varepsilon,b]$. Thus it satisfies (see e.g \cite[Chap. 5, Corollary 15 ]{Roy}), 
\[
f(x) - f(a+\varepsilon) = \int_{a+\varepsilon}^{x} (-W'_r(u))f(u)\,du,\qquad \forall x \in [a+\varepsilon,b].
\]
Letting $\varepsilon\to 0$, one easily sees that 
\[
f(x)= \int_{a}^{x} (-W'_r(u))f(u)\,du,
\]
for $x \in [a,b]$ (the fact that the integrand is integrable on $[a,b]$ is also easily justified). According to \cite[Chap. 5, Theorem 14]{Roy} this shows that $f$ is absolutely continuous on any segment of the form $[a,b]$, $a<b$. Since $f$ vanishes on $(-\infty,a]$, it follows that $f$ is actually absolutely continuous on any segment.

Now, let us turn to the case $n\geq 2$. Let $W \in \mathcal{F}(\R^n)$ be essentially continuous, and consider the set 
\[
A = \{ x \in \partial \mathrm{dom}(W) : W(x)<\infty\}.
\] By assumption $\mathcal{H}^{n-1}(A)=0$, therefore $A_n = \{\bar{x} \in \R^{n-1} : \exists x_n \in \R, (\bar{x},x_n) \in A\}$ has Lebesgue measure $0$ (since it is the projection of $A$ onto $\R^{n-1}$). For all $\bar{x} \in \R^{n-1} \setminus A_n$, the function $x_n \mapsto W(\bar{x},x_n)$ is essentially continuous. According to the case $n=1$, one concludes that $F$ is absolutely continuous on any line of the form $\{\bar{x}\} \times \R$ with $\bar{x} \in \R^{n-1} \setminus A_n$. The same reasoning holds for lines parallel to the other axis.
\end{proof}

Now let us turn to the proof of Corollary \ref{cor:FMd1}.

\smallskip

\proof[Proof of Corollary \ref{cor:FMd1}.]
According to  \cite[Theorem 3]{FM08a}, the inequality $\IS_1(e)$ holds true: for all $f\in \mathcal{F}(\R)$ such that $\int e^{-f}\,dx >0$ and $\int e^{-f^*}\,dx>0$, it holds 
\begin{equation}\label{eq:FMd1}
\int e^{-f}\,dx\int e^{-f^*}\,dx \geq e.
\end{equation}
So the first part of Corollary \ref{cor:FMd1} is an immediate consequence of Theorem \ref{thm:mainresult} (note also that in dimension $1$ a convex function is essentially continuous if and only if it is continuous as a function taking values in $\R \cup\{+\infty\}$). 

Let us now show the optimality of the lower bound on $\delta_2$.
Define, for all $\eta_1,\eta_2$ satisfying the assumptions of Corollary \ref{cor:FMd1}, 
\[
\Delta(\eta_1,\eta_2) := \delta_{2} (\eta_1 \otimes \eta_2) -  \frac{1}{2}W_2^2(\nu_1,\nu_2)+\log (2\pi/ e).
\]
According to the proof of Theorem \ref{thm:mainresult} and \eqref{eq:IS-T3}, we see that if $V_1,V_2:\R \to \R$ then
\[
\Delta(\eta_1,\eta_2) = \mathcal{T}(\nu_1,\nu_2)-H(\eta_1|\Leb) - H(\eta_2|\Leb) -3.
\]
We will now consider sequences $(\eta_1^k)_{k\geq 1}$ and $(\eta_2^k)_{k\geq 1}$ approximating the two exponential probability measures $\tau$ and $\bar{\tau}$ defined by
\begin{equation}\label{eq:exp}
\tau(dx) = e^{-(1+x)} \mathbf{1}_{[-1,+\infty[}(x)\,dx\qquad \text{and}\qquad \bar{\tau}(dx) = e^{x-1} \mathbf{1}_{]-\infty,1]}(x)\,dx
\end{equation}
which are not admissible since their densities are not continuous. More precisely, let us  define $d\eta_1^k = \frac{1}{Z_1^k}e^{-V_1^k}\,dx$, where 
\[
V_1^k(x) = -k(x+1)\mathbf{1}_{]-\infty,-1[}(x)+(x+1)\mathbf{1}_{[-1,+\infty[}(x) 
\]
and $Z_1^k = \frac{1+k}{k}$ is the normalizing constant. We define similarly $\eta_2^k$ as the push forward of $\eta_1^k$ under the map $x\mapsto -x$.
A simple calculation shows that, for $i=1,2$, 
\[
H(\eta_i^k | \Leb) = -1 - \log\left(1 + \frac{1}{k}\right) \to -1
\]
as $k\to +\infty$. It is also not difficult to check that 
\[
\nu_1^k = \frac{1}{k+1} \delta_{-k}+  \frac{k}{k+1} \delta_1 \qquad \text{and}\qquad \nu_2^k = \frac{k}{k+1} \delta_{-1} + \frac{1}{k+1} \delta_{k}.
\]
The monotone optimal transport $\pi^k$ plan between $\nu_1^k$ and $\nu_2^k$ is given by
\[
\pi^k(-k,-1) = \frac{1}{k+1},\qquad \pi^k(1,-1) = \frac{k-1}{k+1},\qquad \pi^k(1,k) = \frac{1}{k+1}.
\]
So
\[
\mathcal{T}(\nu_1^k,\nu_2^k) = \int xy \,\pi^k(dxdy) = k \frac{1}{k+1} - \frac{k-1}{k+1}+ k \frac{1}{k+1}= 1.
\]
Therefore, 
\[
\Delta(\eta_1^k,\eta_2^k) = 2 \log\left(1 + \frac{1}{k}\right) \to 0
\]
as $k\to +\infty$. 
\endproof

\begin{remark}[Equality cases in $\IS_1(e)$]
Let $V,\bar{V} : \R \to \R \cup\{+\infty\}$ be the functions defined by $V (x) = x$ if $x \geq -1$ and $+\infty$ if $x<-1$ (resp. $\bar{V} (x) = -x$ if $x \leq 1$ and $+\infty$ if $x>1$).
As shown by Fradelizi and Meyer in \cite{FM08a}, the cases of equality in \eqref{eq:FMd1} are precisely the functions of the form $f(x)=V(ax)+b$, $a\neq 0$, $b\in \R$. 
As already mentioned in the proof, the probability measures $\tau$ and $\bar{\tau}$ defined by \eqref{eq:exp} are not admissible, because the functions $V$ and $\bar{V}$ are not continuous on $\R$. 
Note in particular that the moment measures $\nu_\tau$ and $\nu_{\bar{\tau}}$ associated to $\tau$ and $\bar{\tau}$ are respectively the Dirac masses $\delta_1$ and $\delta_{-1}$, which are not centered.
\end{remark}

\begin{remark}[Convergence of $\nu_i^k$, $i=1,2$]
Let us underline some subtleties concerning the convergence of the sequences $\nu_i^k$, $i=1,2$.
Note that $\nu_1^k$ is centered  for every $k\geq 1$ but weakly converges to $\delta_1$ which is not. This means that convergence is not true for the $W_1$ metric and a fortiori for the $W_2$ metric. This is confirmed by the fact that $\mathcal{T}(\nu_1^k,\nu_2^k) \to 1 \neq \mathcal{T}(\delta_1,\delta_{-1}) = -1$. Also, $\int x^2\,d\nu_i^k = k \to +\infty$ as $k\to +\infty$. Thus $W_2^2(\nu_1^k,\nu_2^k) = 2(k-1) \to +\infty$ as $k\to +\infty$. Therefore, the sequence 
\[
\delta_{2} (\eta_1^k \otimes \eta_2^k) -  \frac{1}{2}W_2^2(\nu_1^k,\nu_2^k)+\log (2\pi/ e)
\]
converges to $0$ but is the difference of two diverging sequences.
\end{remark}

\begin{remark}[Ghost equality cases]
Simple calculations show that
\[
H(\tau|\gamma_1)=H(\bar{\tau}|\gamma_1) = \frac{1}{2}\log\left(\frac{2\pi}{e}\right),\qquad W_2^2(\delta_1,\delta_{-1}) = 4,\qquad \text{and}\qquad \tilde{I}(\tau|\gamma_1) = \tilde{I}(\bar{\tau}|\gamma_1) = 2,
\]
where $\tilde{I}(\,\cdot\,|\gamma_1)$ is defined in \eqref{eq:tildeI}. Therefore the equation 
\[
H(\tau |\gamma_1)+ H(\bar{\tau} |\gamma_1) + \frac{1}{2}W_2^2(\delta_1,\delta_{-1}) = \frac{1}{2} \tilde{I}(\tau| \gamma_1)+ \frac{1}{2} \tilde{I}(\bar{\tau}| \gamma_1)+  \log (2\pi/ e)
\]
holds true. This suggests that the inequality
\[
H(\eta_1 |\gamma_1)+ H(\eta_2 |\gamma_1) + \frac{1}{2}W_2^2(\nu_1,\nu_2) \leq \frac{1}{2} \tilde{I}(\eta_1| \gamma_1)+ \frac{1}{2} \tilde{I}(\eta_2| \gamma_1)+  \log (2\pi/ e)
\] 
could perhaps be extended outside the domain of log-concave probability measures of the form $d\eta_i = e^{-V_i}\,dx$ with a continuous $V_i:\R \to \R\cup\{+\infty\}$. Nevertheless, the fact that the simple approximation scheme used in the proof of Corollary \ref{cor:FMd1} yields to blowing up quantities seems to leave little hope for that.
\end{remark}

Let us now turn to the proof of Theorem \ref{thm:mainresult2}.
\smallskip

\proof[Proof of Theorem \ref{thm:mainresult2}.]
According to Fradelizi-Meyer \cite[Theorem 10]{FM08a}, the inequality $\IS_{n,u}(4)$ holds true. Therefore, Theorem \ref{thm:mainresult} yields to the following reinforcement of the Gaussian logarithmic Sobolev inequality: if $\eta_1,\eta_2$ are unconditional log-concave probability measures on $\R^n$ such that, for $i=1,2$, $d\eta_i = e^{-V_i}\,dx$ with $V_i: \R^n\to \R\cup\{+\infty\}$ an essentially continuous convex function, it holds
\[
H(\eta_1 |\gamma_n)+ H(\eta_2 |\gamma_n)+\frac{1}{2}W_2^2(\nu_1,\nu_2)  \leq n\log (\pi/ 2) +   \frac{1}{2} I(\eta_1| \gamma_n)   + \frac{1}{2} I(\eta_2| \gamma_n) ,
\]
where, for $i=1,2$,  $\nu_i$ is the moment measure of $\eta_i.$

Consider the symmetric exponential probability measure $\tau_s(dx) = \frac{1}{2}e^{-|x|}\,dx$ and let us choose $\eta_2(dx) = \tau_s^{\otimes n}(dx) =  \frac{1}{2^n} e^{-\sum_{i=1}^n |x_i|}\,dx$ (whose minus log density realizes the equality case in $\IS_{n,u}(4)$).
Then simple calculations show that $\nu_2 = \left(\frac{1}{2}\delta_{-1} + \frac{1}{2}\delta_1\right)^{\otimes n} = \lambda_{C_n}$,
\[
H(\tau_s^{\otimes n} |\gamma_n) = \frac{n}{2} \log\left(\frac{e\pi}{2}\right) \qquad \text{and}\qquad I(\tau_s^{\otimes n}| \gamma_n) = n.
\]
Therefore, for any $\eta(:=\eta_1)$ as above, one gets
\[
H(\eta |\gamma_n)+\frac{1}{2}W_2^2\left(\nu,  \lambda_{C_n}\right)  \leq \frac{n}{2}\log \left(\frac{\pi e}{2}\right)+   \frac{1}{2} I(\eta| \gamma_n).
\]
Consider now the sequence of probability measures $(\eta_k)_{k\geq 1}$ given by $d\eta_k = \frac{1}{Z_k}e^{-V_k}\,dx$, with 
\[
V_k(x) = \left\{\begin{array}{ll} k|x-1| & \text{if } x\geq 1  \\ 0 & \text{if } x\in [-1,1]  \\ k|x+1| & \text{if } x \leq -1 \end{array}\right.
\]
and $Z_k = \frac{2(k+1)}{k}$. Easy calculations show that, when $k\to +\infty$,
\[
H(\eta_k | \gamma_1) = \frac{1}{2}\log \left(\frac{\pi}{2}\right) - \log \left(1+ \frac{1}{k}\right) +  \frac{k}{(k+1)} \left[ \frac{1}{6} + \frac{2}{k^3} + \frac{2}{k^2}\right] = \frac{1}{2}\log \left(\frac{\pi}{2}\right) + \frac{1}{6}  + o(1),
\]
\[
I(\eta_k | \gamma_1) = \frac{k}{k+1} \left[ \frac{1}{3} + \frac{1}{k}\left(\frac{1}{k^2} + \left(1-k + \frac{1}{k}\right)^2\right)\right] = \frac{1}{3} + k-3 + o(1)
\]
and 
\[
W_2^2\left(\nu_k, \frac{1}{2}\delta_{-1} + \frac{1}{2}\delta_1\right) = \frac{1}{(k+1)} \left(k^2-k+1\right) = k-2 + o(1),
\]
where $\nu_k = \frac{1}{2(k+1)} \delta_{-k} + \frac{k}{k+1} \delta_0 + \frac{1}{2(k+1)} \delta_{k}$ is the moment measure of $\eta_k.$
So, 
\[
 \frac{1}{2} I(\eta_k| \gamma_1) - H(\eta_k | \gamma_1) - \frac{1}{2} W_2^2\left(\nu_k, \frac{1}{2}\delta_{-1} + \frac{1}{2}\delta_1\right) = -\frac{1}{2}\log \left(\frac{\pi e}{2}\right) + o(1).
\]
Since 
\[
 \frac{1}{2} I(\eta_k^{\otimes n}| \gamma_n) - H(\eta_k^{\otimes n} | \gamma_n) - \frac{1}{2} W_2^2\left(\nu_k^{\otimes n}, \lambda_{C_n}\right) = n\left[ \frac{1}{2} I(\eta_k| \gamma_1) - H(\eta_k | \gamma_1) - \frac{1}{2} W_2^2\left(\nu_k, \frac{1}{2}\delta_{-1} + \frac{1}{2}\delta_1\right)\right],
\]
this completes the proof. 
\endproof

\section{Proof of Theorem \ref{thm:dualitygen}}\label{sec:proof}
During the proof, we will use the following version of the min-max theorem due to Sion \cite{Sion58}.
\begin{theorem}[Sion min-max theorem]\label{thm:sion}
Let $\X$ and $\Y$ be two convex subsets of some linear topological spaces. 
Let $F:\X\times \Y \to \R$ be such that $f(x,\cdot)$ is concave and upper semicontinuous for every $x \in \X$ and $f(\cdot, y)$ is convex and lower semicontinuous for every $y \in \Y$. If $\X$ or $\Y$ is compact, then 
\[
\inf_{x \in \X} \sup_{y \in \Y} F(x,y) = \sup_{y \in \Y}  \inf_{x \in \X} F(x,y).
\]
\end{theorem}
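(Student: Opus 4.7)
The plan is to follow the standard topological route to Sion's theorem. The direction $\sup_y \inf_x F(x,y) \leq \inf_x \sup_y F(x,y)$ holds without any hypothesis, via the chain $\inf_{x'} F(x',y) \leq F(x,y) \leq \sup_{y'} F(x,y')$ followed by taking $\inf_x$ on the right and $\sup_y$ on the left. The entire work is in the reverse inequality, which I would establish by contradiction under the assumption that $\X$ is compact (the case of $\Y$ compact being symmetric after swapping roles and signs).

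Suppose for contradiction that $v := \sup_y \inf_x F(x,y) < \alpha < \inf_x \sup_y F(x,y) =: V$ for some $\alpha \in \R$. For each $y \in \Y$, lower semi-continuity of $F(\cdot,y)$ on $\X$ makes $U_y := \{x \in \X : F(x,y) > \alpha\}$ open in $\X$. The inequality $\sup_y F(x,y) \geq V > \alpha$ valid for every $x$ forces $\X = \bigcup_{y \in \Y} U_y$, and compactness of $\X$ yields a finite subcover $\X = U_{y_1} \cup \cdots \cup U_{y_n}$ with $y_1, \ldots, y_n \in \Y$. The problem is thereby reduced to the following finite-dimensional core claim: \emph{there exists a convex combination $y_0 = \sum_{i=1}^n t_i y_i$, which lies in $\Y$ by convexity, such that $F(x, y_0) > \alpha$ for every $x \in \X$.} Such a $y_0$ would give $\inf_x F(x, y_0) \geq \alpha > v$, contradicting the definition of $v$.

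I would prove the core claim by induction on $n$. The heart of the argument, and the main obstacle, is the base case $n = 2$. Given $y_1, y_2 \in \Y$ with $\X = U_{y_1} \cup U_{y_2}$, parametrize the segment by $y(t) := (1-t)y_1 + t y_2 \in \Y$ and consider the sub-level sets $A_t := \{x \in \X : F(x, y(t)) \leq \alpha\}$, which are closed and convex subsets of the compact set $\X$ by the convexity and lower semi-continuity of $F(\cdot, y(t))$. One then uses upper semi-continuity and concavity of $F(x, \cdot)$ along the segment to show that if every $A_t$ were non-empty, then by a compactness/finite-intersection argument one could produce an $x_\ast \in \X$ lying outside both $U_{y_1}$ and $U_{y_2}$, contradicting the cover. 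This step is delicate: the cleanest version (Komiya's treatment) observes that the map $t \mapsto A_t$ is, in a precise sense, ``connected,'' so one cannot split $[0,1]$ into the portion where $A_t$ is contained in $U_{y_1}^c$ and the portion where it is contained in $U_{y_2}^c$ without a meeting point; the concavity/upper semi-continuity in $y$ is exactly what provides this non-splitting.

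The inductive step $n-1 \to n$ is comparatively routine: apply the inductive hypothesis to the cover of $\X$ obtained by replacing $\{U_{y_1}, \ldots, U_{y_{n-1}}\}$ with the union (which, on a suitable sub-convex-hull, still has the required structure), produce a single $y' \in \mathrm{co}(y_1, \ldots, y_{n-1}) \subseteq \Y$, and then apply the $n=2$ case to $(y', y_n)$. Once the core claim is proved, the contradiction with $v < \alpha$ closes the argument. The entire difficulty is thus concentrated in the $n=2$ base case, where convexity of $\X$, concavity and upper semi-continuity of $F(x, \cdot)$, and lower semi-continuity of $F(\cdot, y)$ must be combined simultaneously.
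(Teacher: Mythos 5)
First, a point of comparison: the paper does not prove this statement at all --- Theorem \ref{thm:sion} is quoted as an external result with the citation to Sion \cite{Sion58} --- so there is no in-paper argument to measure yours against; your proposal has to stand on its own as a proof of the minimax theorem. Judged that way, your outline is the standard elementary route (Komiya's reduction): the trivial inequality, the contradiction level $\alpha$, the open cover $U_y=\{x\in\X: F(x,y)>\alpha\}$ obtained from lower semicontinuity in $x$, the finite subcover by compactness, and the reduction to producing a single $y_0$ in the convex hull of $y_1,\dots,y_n$ with $\inf_{x\in\X}F(x,y_0)\geq\alpha$. That global architecture is correct, and the hypotheses of the theorem (convexity/l.s.c.\ in $x$, concavity/u.s.c.\ in $y$, one side compact) are used exactly where you say they are.

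The gap is that the one genuinely nontrivial step --- the two-point lemma --- is described rather than proved. Saying that ``one could produce an $x_\ast$ outside both $U_{y_1}$ and $U_{y_2}$'' by ``a compactness/finite-intersection argument,'' and then that Komiya's connectedness observation ``is exactly what provides this non-splitting,'' names the idea without executing it; all of the content of Sion's theorem lives there. A complete argument must show: (a) by concavity in $y$, $F(x,z)\geq\min\{F(x,y_1),F(x,y_2)\}$ for every $z$ on the segment, hence $A_z\subset A_{y_1}\cup A_{y_2}$, where $A_z=\{x\in\X : F(x,z)\leq\alpha\}$; (b) $A_{y_1}\cap A_{y_2}=\emptyset$ because $\X=U_{y_1}\cup U_{y_2}$; (c) each nonempty $A_z$ is convex, hence connected, so it lies entirely in $A_{y_1}$ or entirely in $A_{y_2}$; (d) the two parameter sets $I=\{t: A_{z(t)}\subset A_{y_1}\}$ and $J=\{t: A_{z(t)}\subset A_{y_2}\}$ are both closed in $[0,1]$ --- this is where upper semicontinuity of $F(x,\cdot)$ enters, via $F(x,z(t))\geq\limsup_k F(x,z(t_k))$ for $t_k\to t$ --- and both nonempty, contradicting connectedness of $[0,1]$ if every $A_{z(t)}$ is nonempty. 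Similarly, your inductive step as phrased (``replacing the first $n-1$ open sets by their union on a suitable sub-convex-hull'') is not the correct reduction: the standard fix is to apply the inductive hypothesis to $F$ restricted to the compact convex set $\X'=\X\setminus U_{y_n}=\{x: F(x,y_n)\leq\alpha\}$ (covered by $U_{y_1},\dots,U_{y_{n-1}}$), obtain $y'$ in the convex hull of $y_1,\dots,y_{n-1}$ with $F(x,y')>\alpha$ on $\X'$, and then apply the two-point lemma on all of $\X$ to the pair $(y',y_n)$. With (a)--(d) written out and the induction step repaired in this way, your proof goes through; as it stands, the base case is an appeal to the literature rather than a proof.
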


\smallskip

\proof[Proof of Theorem \ref{thm:dualitygen}.] Let $m$ be a log-concave measure and $f\in \mathcal{F}(\R^n)$ some convex function such that $0<\int e^{-f^*}\,dm <+\infty$ (according to Proposition \ref{prop:dualtriv}, there is nothing to prove when this integral is $+\infty$).

\noindent \textit{First step.} 
By definition of $K(\,\cdot\,|m)$, it holds 
\begin{align*}
\sup_{\nu \in \mathcal{P}_1(\R^n)} \left\{ \int (-f)\,d\nu - K(\nu |m)\right\} & = \sup_{\nu \in \mathcal{P}_1(\R^n)} \inf_{\varphi \in \widetilde{\mathcal{F}}_{\mathrm{Lip}}(\R^n)} \left\{\int (\varphi-f)\,d\nu - \log \int e^{-\varphi^*}\,dm\right\},
\end{align*}
where we recall that $\widetilde{\mathcal{F}}_{\mathrm{Lip}}(\R^n)$ is defined just after Proposition \ref{prop:dualtriv}.
Let us assume for a moment that $f$ is such that
\begin{equation}\label{eq:minmax}
\sup_{\nu \in \mathcal{P}_1(\R^n)} \inf_{\varphi \in \widetilde{\mathcal{F}}_{\mathrm{Lip}}(\R^n)} \left\{\int (\varphi-f)\,d\nu - \log \int e^{-\varphi^*}\,dm\right\} = \inf_{\varphi \in  \widetilde{\mathcal{F}}_{\mathrm{Lip}}(\R^n)} \sup_{\nu \in \mathcal{P}_1(\R^n)} \left\{\int (\varphi-f)\,d\nu - \log \int e^{-\varphi^*}\,dm\right\}.
\end{equation}
This interversion of $\inf$ and $\sup$ will be justified in the second step below.
Let us show that 
\[
\inf_{\varphi \in  \widetilde{\mathcal{F}}_{\mathrm{Lip}}(\R^n)} \sup_{\nu \in \mathcal{P}_1(\R^n)} \left\{\int (\varphi-f)\,d\nu - \log \int e^{-\varphi^*}\,dm\right\} = -\log \int e^{-f^*}\,dm.
\]
Note that 
\[
\sup_{\nu \in \mathcal{P}_1(\R^n)} \int (\varphi-f)\,d\nu = \sup_{x\in \R^n} (\varphi(x) - f(x)):= m_\varphi.
\]
So, 
\begin{align*}
\inf_{\varphi \in  \widetilde{\mathcal{F}}_{\mathrm{Lip}}(\R^n)} \sup_{\nu \in \mathcal{P}_1(\R^n)} \left\{\int (\varphi-f)\,d\nu - \log \int e^{-\varphi^*}\,dm\right\}  &= \inf_{\varphi \in  \widetilde{\mathcal{F}}_{\mathrm{Lip}}(\R^n)} \left\{ m_\varphi- \log \int e^{-\varphi^*}\,dm\right\}\\
& =  \inf_{\varphi \in  \widetilde{\mathcal{F}}_{\mathrm{Lip}}(\R^n) \text{ s.t } m_\varphi=0} \left\{- \log \int e^{-\varphi^*}\,dm\right\}.
\end{align*}
Let us show that 
\begin{equation}\label{eq:step1}
\inf_{\varphi \in  \widetilde{\mathcal{F}}_{\mathrm{Lip}}(\R^n) \text{ s.t } m_\varphi=0} \left\{- \log \int e^{-\varphi^*}\,dm\right\}=  -\log \int e^{-f^*}\,dm.
\end{equation}
First, note that if $\varphi$ is such that $m_{\varphi}=0$, then $\varphi \leq f$ and so $-\log \int e^{-\varphi^*}\,dm \geq -\log \int e^{-f^*}\,dm$. Conversely, let us construct a sequence of convex and Lipschitz functions $f_k$ such that $m_{f_k}=0$ and $\int e^{-f_k^*}\,dm \to  \int e^{-f^*}\,dm$. The function $f$ being convex, one can find $a\in \R^n$ and $b\in \R$ such that $f(x) \geq a\cdot x + b$, $x\in \R^n.$ Let us denote by $g(x) = f(x)-(a\cdot x + b)$, which is convex and non-negative. Consider the sequence of convex functions $g_k$ defined by
\[
g_k(x) = \inf_{y\in \R^n}\{g(y) +k |x-y|\}, \qquad x\in \R^n, k\geq 1,
\] 
as in Lemma \ref{lem:infconv}, which is such that $g_k \leq g$, $g_k$ is $k$-Lipschitz, and $g_k^* = g^* + \chi_{B_k}$. Letting $f_k(x) = g_k(x) + a\cdot x + b$, one gets that 
\[
f_k^*(y) = g_k^*(y-a)-b = g^*(y-a)+\chi_{B_k}(y-a)-b = f^*(y)+\chi_{B_k}(y-a).
\]
Therefore, $\int e^{-f_k^*}\,dm \to  \int e^{-f^*}\,dm$, by the monotone convergence theorem (and in particular $f_k$ belongs to $ \widetilde{\mathcal{F}}_{\mathrm{Lip}}(\R^n)$ for all $k$ large enough). Note that $m_{f_k} = \sup_{x\in \R^n} \{g_k(x) - g(x)\} \leq 0$. Since $g$ is bounded from below and lower semi-continuous, it reaches its infimum at some point $\alpha \in \R^n$, and it is easily seen that $g_k(\alpha) = g(\alpha)$. Therefore, $m_{f_k} = 0$, which completes the proof of \eqref{eq:step1}.

\noindent \textit{Second step.} In this step, we show that if $f\in \mathcal{F}(\R^n)$ is such that $0<\int e^{-f^*}\,dm <+\infty$ and such that $D:=\mathrm{dom}(f)$ is compact and $f$ is bounded on $D$, then \eqref{eq:minmax} holds true. Let us denote by $\mathcal{P}(D)$ the set of Borel probability measures on $D$ and  
consider the function $F : \mathcal{P}(D) \times \widetilde{\mathcal{F}}_{\mathrm{Lip}}(\R^n) \to \R \cup \{-\infty\}$ defined by
\[
F(\nu,\varphi) =\int (\varphi-f)\,d\nu - \log \int e^{-\varphi^*}\,dm.
\]
Let us denote by 
\[
C(f) = \sup_{\nu \in \mathcal{P}_1(\R^n)} \left\{ \int (-f)\,d\nu - K(\nu |m)\right\} = \sup_{\nu \in \mathcal{P}(D)} \inf_{\varphi \in\widetilde{\mathcal{F}}_{\mathrm{Lip}}(\R^n) } F(\nu,\varphi)
\] 
and note that this quantity is finite according to Proposition \ref{prop:dualtriv}.
Let us equip $\mathcal{P}(D)$ with the usual weak topology. Since $D$ is compact, it follows from Prokhorov theorem that $\mathcal{P}(D)$ is also compact. 
Let us denote by $\mathcal{M}(D)$ the linear space of all finite Borel signed measures $\nu$ on $D$, and equip it with the coarsest topology that makes continuous the functionals $\mathcal{M}(D)\ni \nu \mapsto \int \varphi\,d\nu$, for all continuous function $\varphi$ on $D$. In restriction to  $\mathcal{P}(D)$, this topology coincides with the weak topology. Therefore, $\X:= \mathcal{P}(D)$ can be seen as a compact convex subset of $\mathcal{M}(D)$.

Consider the space $\mathcal{C}(\R^n)$ of all continuous functions on $\R^n$ and equip it with the topology of uniform convergence over all compact subsets of $\R^n$. The set $\Y:=\widetilde{\mathcal{F}}_{\mathrm{Lip}}(\R^n)$ is a convex subset of $\mathcal{C}(\R^n)$. Indeed, $\widetilde{\mathcal{F}}_{\mathrm{Lip}}(\R^n) =  \{ \varphi \in \mathcal{F}_{\mathrm{Lip}}(\R^n) : - \log \int e^{-\varphi^*}\,dm <+\infty\}$ and this set is convex thanks to Lemma \ref{lem:CEK}.

With these notations, it follows from what precedes that
\[
C(f) =  \sup_{\nu \in \X}  \inf_{\varphi \in \Y }  F(\nu,\varphi).
\]
In order to permute $\inf$ and $\sup$, let us check the assumptions of Theorem \ref{thm:sion}.
\begin{itemize}
\item Restricted to $\X \times \Y$, the functional $F$ takes finite values. Indeed, since $f$ is bounded on $D$, it follows that $\int |\varphi-f|\,d\nu <+\infty$ for all $\nu \in \X$ and $\varphi \in \mathcal{F}_{\mathrm{Lip}}(\R^n)$. Furthermore, if $\varphi \in \widetilde{\mathcal{F}}_{\mathrm{Lip}}(\R^n)$, then $\varphi^* = +\infty$ outside a closed ball, and so $\int e^{-\varphi^*}\,dm <+\infty$ (and $\neq 0$ by definition of $\widetilde{\mathcal{F}}_{\mathrm{Lip}}(\R^n)$). 

\item For any fixed $\varphi \in \Y$, the map $\X \ni \nu \mapsto F(\nu,\varphi)$ is upper-semicontinuous (this follows from the lower semicontinuity and boundedness of $f$ and Portmanteau theorem).

\item For any fixed $\nu \in \X$, the map $\Y \ni \varphi \mapsto F(\nu,\varphi)$ is lower semi-continuous. Indeed, the map $\Y \ni \varphi \mapsto \int \varphi \,d\nu$ is clearly continuous since $\nu\in \X$ has a compact support. Furthermore, if $\varphi_k$ is a sequence of elements of $\Y$ converging to some $\varphi \in \Y$, then we claim that
\begin{equation}\label{eq:limsup}
\limsup_{k\to \infty} \int e^{-\varphi_k^*} \,dm \leq  \int e^{-\varphi^*} \,dm,
\end{equation}
which gives the announced lower-semicontinuity.
To prove \eqref{eq:limsup}, we slightly adapt an argument from the proof of \cite[Lemma 17]{CEK15}. Since $m$ is log-concave, there exists $\alpha >0$ such that $\int e^{-\alpha |x|}\,dm<+\infty$. For any $r>0$, denote by 
\[
\psi_r(y) = \sup_{|x|\leq r} \{x\cdot y - \varphi(x)\},\qquad x\in \R^n.
\]
Then $\psi_r$ converges to $\varphi^*$ monotonically, as $r\to \infty$, and $\psi_\alpha(y) \geq \alpha |y| - M$, where $M = \sup_{|x|\leq \alpha} \varphi(x)$.
So, using the dominated convergence theorem,
\[
\int e^{-\psi_r}\,dm \to \int e^{-\varphi^*}\,dm
\]
as $r \to \infty$. Take some $\varepsilon>0$, and $r_0\geq \alpha$ large enough so that $\int e^{-\psi_{r_0}}\,dm \leq   \int e^{-\varphi^*}\,dm + \varepsilon$.
Define $\psi_{r_o}^k (y)= \sup_{|x|\leq r_0} \{x\cdot y - \varphi_k(x)\}$, $y\in \R^n.$ Since $\varphi_k$ converges uniformly to $\varphi$ on any compact set, one sees that $\psi_{r_o}^k(y) \to \psi_{r_o}(y)$ for all $y\in \R^n.$ Furthermore, $M' := \sup_{k\geq 1} \sup_{|x|\leq \alpha} \varphi(x) <+\infty$ and so $\psi_{r_o}^k (y) \geq \alpha |y| - M'$, $y\in \R^n$. Therefore, by the dominated convergence theorem
\[
\int e^{-\psi_{r_0}^k}\,dm \to \int e^{-\psi_{r_0}}\,dm \leq \int e^{-\varphi^*}\,dm + \varepsilon.
\]
Since, $\int e^{-\varphi_k^*}\,dm \leq \int e^{-\psi_{r_0}^k}\,dm$, one concludes that
\[
\limsup_{k\to \infty} \int e^{-\varphi_k^*} \,dm \leq  \int e^{-\varphi^*} \,dm + \varepsilon
\]
which gives \eqref{eq:limsup} by letting $\varepsilon\to0.$

\item Finally, for any fixed $\varphi \in \Y$, the map $\nu \ni \X \mapsto F(\nu,\varphi)$ is concave (and even linear), and according to Lemma \ref{lem:CEK}, for any fixed $\nu \in \X$, the map $\Y\ni \varphi\mapsto F(\nu,\varphi)$ is convex. 
\end{itemize}
Therefore, applying Theorem \ref{thm:sion}, one gets that
\[
C(f) = \inf_{\varphi \in \widetilde{\mathcal{F}}_{\mathrm{Lip}}(\R^n) } \sup_{\nu \in \mathcal{P}(D)}  F(\nu,\varphi) = \inf_{\varphi \in \widetilde{\mathcal{F}}_{\mathrm{Lip}}(\R^n) } \sup_{\nu \in \mathcal{P}_1(\R^n)}  F(\nu,\varphi).
\]

\noindent \textit{Third step.} According to the two preceding steps, the equality
\[
\sup_{\nu \in \mathcal{P}_1(\R^n)} \left\{ \int (-f)\,d\nu - K(\nu |m)\right\} = - \log \int e^{-f^*}\,dm
\]
holds true for any function $f \in \mathcal{F}(\R^n)$ such that $0<\int e^{-f^*}\,dm <\infty$ and such that $D:=\mathrm{dom}(f)$ is compact and $f$ is bounded on $D$.
Let us finally remove this last assumption. Consider $f\in \mathcal{F}(\R^n)$ such that $0<\int e^{-f^*}\,dm <\infty$. For all $k\geq 1$, define $D_k= \{f\leq k\} \cap B_k$, $k\geq 1$ and  $f^k = f+\chi_{D_k}$, where $B_k$ is the closed ball of radius $k$ centered at $0$. The lower semicontinuity of $f$ implies that the sets $D_k$, $k\geq 1$, are compact. The sequence $f^k$, $k\geq 1$, being non increasing, it follows that the sequence $(f^k)^*$, $k\geq 1$, is non decreasing. Moreover, for any $y\in \R^n$,
\[
\sup_{k\geq 1} (f^k)^*(y) = \sup_{k\geq 1} \sup_{x\in \R^n} \{x\cdot y - f(x) - \chi_{D_k}(x)\} = \sup_{x\in \R^n}  \sup_{k\geq 1}  \{x\cdot y - f(x) - \chi_{D_k}(x)\} = f^*(y).
\] 
Let us admit for a moment that $0<\int e^{-(f^k)^*(y)}\,m(dy)<+\infty$, for all $k$ large enough. 
Letting $k \to \infty$ in the identity
\[
- \log \int e^{-(f^k)^*(y)}\,m(dy) = \sup_{\nu \in \mathcal{P}_1(\R^n)} \left\{\int (-f^k)\,d\nu -K(\nu|m)\right\}
\]
and reasoning as in the end of the proof of Theorem \ref{thm:dualLeb}, one concludes that the identity holds for $f$ as well.
To finish the proof, let us show that $0<\int e^{-(f^k)^*(y)}\,m(dy)<+\infty$ for all $k$ large enough. Since $(f^k)^* \leq f^*$, it is clear that $0<\int e^{-(f^k)^*(y)}\,m(dy)$ for all $k\geq 1$. So, according to Lemma \ref{lem:elem}, $\int e^{-(f^k)^*(y)}\,m(dy)<+\infty$ if and only if $0$ belongs to the interior of $\mathrm{dom} ( ((f^k)^*+V)^* )$. Note that 
\[
((f^k)^*+V)^*(x) = f^k \square V^*(x) := \inf_{y\in \R^n} \{f^k(y) + V^*(x-y)\},
\]
where $\square$ denotes the infimum convolution operations. From this follows easily that
\[
\mathrm{dom} ( ((f^k)^*+V)^* ) = \mathrm{dom}(f^k) + \mathrm{dom} (V^*) = (\mathrm{dom}(f)\cap D_k) + \mathrm{dom} (V^*).
\]
Since $0<\int e^{-f^*}\,dm <+\infty$, we know that $0$ belongs to the interior of $\mathrm{dom}(f)+ \mathrm{dom} (V^*).$ Therefore, there is some $\ep>0$ such that $\ep [-1,1]^n \subset \mathrm{dom}(f)+ \mathrm{dom} (V^*)$. So, for any $u \in \{-1,1\}^n$, there exist $a_u \in \mathrm{dom}(f)$ and $b_u \in \mathrm{dom} (V^*)$ such that $a_u + b_u = \ep u$. Choose $k_o$ large enough so that the $2^n$ points $a_u$, $u\in \{-1,1\}^n$, all belong to $\mathrm{dom}(f)\cap D_{k_o}$. Then, for all $k \geq k_o$, the convex set $(\mathrm{dom}(f)\cap D_k) + \mathrm{dom} (V^*)$ contains the family of points $\ep u$, $u \in \{-1,1\}^n$ and so it contains their convex hull $\ep [-1,1]^n.$ This proves that $0$ belongs to the interior of $\mathrm{dom} ( ((f^k)^*+V)^* )$ and completes the proof.
\endproof

\section{Yet another proof of Theorem \ref{thm:dualLeb}.}\label{Sec:Lebesgue} In this section, we indicate another way, based on a general Fenchel-Moreau biconjugation theorem, to prove Theorem \ref{thm:dualLeb}. The same method could be used to establish Theorem \ref{thm:dualitygen} as well, but we prefer to restrict to the case where $m$ is the Lebesgue measure to avoid lengthy developments. 

Let $\Omega \subset \R^n$ be an open subset and denote by $\mathcal{C}(\Omega)$ the space of all continuous functions on $\Omega$. We will equip $\mathcal{C}(\Omega)$ with the topology of uniform convergence on compact sets of $\Omega$. This is the topology generated by the collection of seminorms $p_{K_i}$, $i\geq 1$, defined by 
\[
p_{K_i} (f) =\sup_{x\in K_i} |f(x)|,\qquad f\in \mathcal{C}(\Omega),
\]
where $(K_i)_{i\geq 1}$ is an increasing sequence of compact sets such that $\Omega = \cup_{i\geq 1} K_i$. 
The following result is a consequence of the Riesz-Markov representation theorem (see \cite[Proposition 14 page 156]{Bou}).
\begin{theorem}
The topological dual space $(\mathcal{C}(\Omega))'$ of $\mathcal{C}(\Omega)$ can be identified with the set of finite signed Borel measures $\mu$ with a compact support $K \subset \Omega$.
\end{theorem}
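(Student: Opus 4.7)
The plan is to reduce the description of $(\mathcal{C}(\Omega))'$ to the classical Riesz-Markov-Kakutani representation theorem on a compact metrizable space, using the seminorm structure of the topology on $\mathcal{C}(\Omega)$. First I would dispatch the easy direction: given a finite signed Borel measure $\mu$ whose support is a compact set $K\subset \Omega$, the map $T_\mu : f\mapsto \int_\Omega f\,d\mu$ is a well-defined linear form on $\mathcal{C}(\Omega)$, and the bound $|T_\mu(f)|\leq |\mu|(K)\,p_K(f)$, where $|\mu|$ denotes the total variation measure, shows that $T_\mu$ is continuous with respect to the seminorm $p_K$, hence continuous on $\mathcal{C}(\Omega)$. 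Every such $\mu$ therefore gives an element of $(\mathcal{C}(\Omega))'$.

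For the converse, I would start from an arbitrary $T\in (\mathcal{C}(\Omega))'$. Since the topology on $\mathcal{C}(\Omega)$ is generated by the countable family of seminorms $(p_{K_i})_{i\geq 1}$, continuity of $T$ at $0$ yields an index $i_0$ and a constant $C>0$ such that
\[
|T(f)|\leq C\,p_{K_{i_0}}(f) \qquad \text{for all } f\in \mathcal{C}(\Omega).
\]
In particular, $T(f)=0$ whenever $f$ vanishes on $K_{i_0}$, so $T$ only depends on the restriction $f|_{K_{i_0}}$. By Tietze's extension theorem applied to the compact subset $K_{i_0}$ of the normal space $\Omega$, the restriction map $\mathcal{C}(\Omega)\to\mathcal{C}(K_{i_0})$ is surjective, so $T$ factors through a continuous linear form $\tilde T$ on $(\mathcal{C}(K_{i_0}),\|\cdot\|_\infty)$ with $\|\tilde T\|\leq C$.

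I would then invoke the Riesz-Markov-Kakutani theorem on the compact metrizable space $K_{i_0}$ to obtain a unique finite signed Borel measure $\mu$ on $K_{i_0}$ such that $\tilde T(g)=\int_{K_{i_0}} g\,d\mu$ for every $g\in \mathcal{C}(K_{i_0})$. Extending $\mu$ to a Borel measure on $\R^n$ by setting $\mu(A):=\mu(A\cap K_{i_0})$, one obtains a compactly supported finite signed Borel measure with support contained in $\Omega$, for which the identity $T(f)=\int_\Omega f\,d\mu$ holds for every $f\in \mathcal{C}(\Omega)$. Uniqueness of $\mu$ as a signed Borel measure follows from the fact that two such measures on $K_{i_0}$ agreeing on $\mathcal{C}(K_{i_0})$ must coincide on the Borel $\sigma$-algebra, by a standard monotone class argument.

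The main technical point is the use of Tietze's theorem, which legitimizes the descent of $T$ from $\mathcal{C}(\Omega)$ to $(\mathcal{C}(K_{i_0}),\|\cdot\|_\infty)$ rather than only to the restriction subspace (on which the induced norm would have to be analyzed separately); once this reduction is in place, the classical Riesz-Markov-Kakutani theorem on a compact metrizable set carries through the remainder of the proof without further difficulty.
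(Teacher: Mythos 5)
Your proof is correct. The paper does not actually prove this statement; it simply cites Bourbaki (\emph{\'El\'ements de math\'ematique, Int\'egration}, Proposition~14, p.~156), so there is no argument in the text to compare against. Your proof follows the standard route for identifying the dual of $\mathcal{C}(\Omega)$ under the compact-open topology: continuity of $T$ against a single seminorm $p_{K_{i_0}}$, factoring through $\mathcal{C}(K_{i_0})$ via Tietze, applying the Riesz--Markov--Kakutani theorem on the compact set $K_{i_0}$, and then pushing the resulting measure forward to $\Omega$. All steps are sound: $K_{i_0}$ is closed in $\Omega$ (being compact in $\R^n$), so Tietze applies; the factored functional $\tilde T$ is bounded in $\|\cdot\|_\infty$ because $p_{K_{i_0}}(f)$ depends only on $f|_{K_{i_0}}$; and the extension of $\mu$ by zero has compact support inside $\Omega$. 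For the uniqueness step you might make explicit that if two compactly supported signed measures $\mu_1,\mu_2$ on $\Omega$ induce the same functional, one takes a compact $K$ containing both supports (using that the $K_i$ exhaust $\Omega$) and applies the Riesz--Markov uniqueness on $K$ after restricting along $\mathcal{C}(\Omega)\twoheadrightarrow\mathcal{C}(K)$; but this is precisely the monotone-class/regularity argument you invoke, so the gap is only expository.
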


Now let us define the conjugate operation on $\mathcal{C}(\Omega)$. For any $f \in \mathcal{C}(\Omega)$, let $c_{\Omega}(f)$ be the function defined on $\R^n$ as follows
\[
c_\Omega(f) (y)= \sup_{x\in \Omega} \{x\cdot y - f(x)\},\qquad y\in \R^n.
\]
We also define the functional $\Lambda_{\Omega} : \mathcal{C}(\Omega) \to \R \cup \{\pm\infty\}$ as follows
\[
\Lambda_{\Omega} (f) = - \log \int e^{-c_\Omega(f)}\,dx.
\]

\begin{lemma}\label{lem:sciproper}
If $0 \in \Omega$, the functional $\Lambda_{\Omega}$ is lower semi-continuous, convex and never takes the value $-\infty$.
\end{lemma}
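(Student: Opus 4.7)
My plan is to verify the three properties of $\Lambda_{\Omega}$ in the order: (i) never $-\infty$, (ii) convexity, (iii) lower semi-continuity, relying at each step on the hypothesis $0 \in \Omega$.

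For (i), I would use $0 \in \Omega$ together with $\Omega$ open to pick some $r>0$ with $\overline{B}(0,r) \subset \Omega$. Since $f$ is continuous, $M := \sup_{\overline{B}(0,r)} f < +\infty$. For any $y \in \R^n \setminus \{0\}$, testing the point $x = ry/|y| \in \overline{B}(0,r) \subset \Omega$ in the supremum defining $c_{\Omega}(f)$ gives
\[
c_{\Omega}(f)(y) \geq r|y| - f(ry/|y|) \geq r|y| - M,
\]
so $e^{-c_{\Omega}(f)(y)} \leq e^{M - r|y|}$, which is integrable on $\R^n$. Hence $\int e^{-c_{\Omega}(f)}\,dx \in (0, +\infty)$ (positivity coming from the fact that the sup of the continuous bound is finite near $0$, or simply from the LSC character of $c_{\Omega}(f)$), and so $\Lambda_{\Omega}(f) > -\infty$.

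For (ii), the cleanest route is to reduce to Lemma \ref{lem:CEK}. Extend $f \in \mathcal{C}(\Omega)$ to $\widetilde{f} : \R^n \to \R \cup \{+\infty\}$ by setting $\widetilde{f} \equiv +\infty$ outside $\Omega$. A direct inspection of the Legendre transform shows $\widetilde{f}^{\,*} = c_{\Omega}(f)$, so $\Lambda_{\Omega}(f) = L(\widetilde{f} \mid \Leb)$. Given $f_0, f_1 \in \mathcal{C}(\Omega)$ and $t \in [0,1]$, the linear interpolation $(1-t)f_0 + t f_1$ on $\Omega$ extends to $(1-t)\widetilde{f}_0 + t\widetilde{f}_1$ (with the convention $(1-t)\infty + t\infty = \infty$). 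Lemma \ref{lem:CEK} (applied to $m = \Leb$) then gives
\[
\int e^{-\left((1-t)\widetilde{f}_0 + t\widetilde{f}_1\right)^*}\,dx \geq \left(\int e^{-\widetilde{f}_0^{\,*}}\,dx\right)^{1-t}\left(\int e^{-\widetilde{f}_1^{\,*}}\,dx\right)^t,
\]
which after taking $-\log$ is exactly the convexity of $\Lambda_{\Omega}$.

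For (iii), suppose $f_k \to f$ in $\mathcal{C}(\Omega)$, i.e. uniformly on every compact subset. For each fixed $y \in \R^n$ and each $x_0 \in \Omega$, $c_{\Omega}(f_k)(y) \geq x_0 \cdot y - f_k(x_0) \to x_0 \cdot y - f(x_0)$, so taking the supremum over $x_0 \in \Omega$,
\[
\liminf_{k\to\infty} c_{\Omega}(f_k)(y) \geq c_{\Omega}(f)(y),\qquad \text{hence}\qquad \limsup_{k\to\infty} e^{-c_{\Omega}(f_k)(y)} \leq e^{-c_{\Omega}(f)(y)}.
\]
To pass this inequality under the integral sign I need a uniform integrable domination, which is where the hypothesis $0 \in \Omega$ is used again. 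Since $f_k \to f$ uniformly on $\overline{B}(0,r) \subset \Omega$, the constants $M_k := \sup_{\overline{B}(0,r)} f_k$ are bounded by some $M' < +\infty$, so the argument of step (i) yields $e^{-c_{\Omega}(f_k)(y)} \leq e^{M' - r|y|}$ for every $k$. The reverse Fatou lemma then gives
\[
\limsup_{k\to\infty} \int e^{-c_{\Omega}(f_k)}\,dx \leq \int e^{-c_{\Omega}(f)}\,dx,
\]
and applying the decreasing function $-\log$ delivers $\liminf_k \Lambda_{\Omega}(f_k) \geq \Lambda_{\Omega}(f)$, proving lower semi-continuity.

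The main obstacle is the lower semi-continuity part: a priori $c_{\Omega}(f_k)$ need not converge pointwise to $c_{\Omega}(f)$, only satisfy the one-sided bound above, and the exchange with the integral requires the explicit exponential domination. The role of the assumption $0 \in \Omega$ is precisely to provide this uniform bound (steps (i) and (iii) would fail, e.g., if $\Omega$ were a half-space not containing the origin, since one could not extract linear growth of $c_{\Omega}(f)$ in all directions).
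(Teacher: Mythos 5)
Your proof is correct, and in substance it follows the same lines as the paper: properness and the integrable majorant both come from the bound $c_{\Omega}(f)(y)\geq r|y|-M$ furnished by a small ball around $0$ inside $\Omega$, and convexity ultimately rests on the Prekopa mechanism of Lemma \ref{lem:CEK}. The two points where you deviate are worth noting. For convexity, the paper simply reruns the Prekopa argument for $c_\Omega$; you instead extend $f$ by $+\infty$ outside $\Omega$ and observe $\widetilde f^{\,*}=c_\Omega(f)$, which lets you quote Lemma \ref{lem:CEK} verbatim (this works because that lemma is stated for arbitrary measurable $f_0,f_1$ with values in $\R\cup\{+\infty\}$) — a clean reduction, same idea. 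For lower semicontinuity, the paper refers to the proof of \eqref{eq:limsup} in Theorem \ref{thm:dualitygen}, which truncates the supremum to a ball $\{|x|\leq r_0\}$, applies dominated convergence to the truncated conjugates and finishes with an $\varepsilon$-argument; you shortcut this by noting $\liminf_k c_\Omega(f_k)\geq c_\Omega(f)$ pointwise (which only uses pointwise convergence of $f_k$) and then invoking reverse Fatou under the uniform majorant $e^{M'-r|y|}$ (which is where uniform convergence on $\overline{B}(0,r)$ and the hypothesis $0\in\Omega$ enter). This is a mild but genuine simplification: it avoids the truncation bookkeeping at no loss of generality, and since $\mathcal C(\Omega)$ is metrizable the sequential formulation of lower semicontinuity you verify is sufficient, exactly as in the paper. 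One small slip: your parenthetical claim that $\int e^{-c_{\Omega}(f)}\,dx>0$ is neither needed nor correct in general (e.g.\ $\Omega=\R^n$, $f(x)=-|x|^2$ gives $c_\Omega(f)\equiv+\infty$, so the integral is $0$ and $\Lambda_\Omega(f)=+\infty$, which the lemma allows); only the finiteness $\int e^{-c_{\Omega}(f)}\,dx<+\infty$ is used to rule out the value $-\infty$, so nothing in your argument is affected.
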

\begin{proof}
The convexity of $\Lambda_\Omega$ follows from the log-concavity of the Lebesgue measure exactly as in Lemma \ref{lem:CEK}.
Let $a \in \Omega$ and $r_o>0$ small enough so that $B_{r_o} \subset \Omega$. Then if $f\in \mathcal{C}(\Omega)$, then denoting by $M = \sup_{x\in B_{r_o}} f(x)$, it holds
\[
c_{\Omega}(f)(y) \geq \sup_{x \in B_{r_o}} \{x\cdot y\} - M = r_o |y| - M.
\]
Therefore, $\int e^{-c_\Omega(f)}\,dx<+\infty$ and so $\Lambda_\Omega(f)>-\infty$, for all $f \in \mathcal{C}(\Omega)$.
Reasoning as in the proof of Theorem \ref{thm:dualitygen} (more precisely, the proof of \eqref{eq:limsup}, taking $\alpha=r_o$), one sees that if $(f_n)_{n\geq 1}$ is a sequence of elements of $\mathcal{C}(\Omega)$ converging to $f \in \mathcal{C}(\Omega)$ (uniformly on any compact of $\Omega$), then 
\[
\limsup_{n \to \infty} \int e^{-c_\Omega(f_n)}\,dx \leq \int e^{-c_\Omega(f)}\,dx,
\]
which gives the announced lower semicontinuity of $\Lambda_{\Omega}$.
\end{proof}
We recall the following general version of the Fenchel-Moreau duality theorem (see for instance \cite[Theorem 2.3.3]{Zal02}).
\begin{theorem}[General Fenchel-Moreau theorem]\label{Fenchel-Legendre}
Let $E$ be a Hausdorff locally convex topological vector space and $E'$ its topological dual space. For any lower semicontinuous convex function $F \colon E\to ]-\infty,\infty]$, it holds
\[
F(x)=\sup_{\ell \in E'}\{ \ell(x) - F^*(\ell)\},\qquad  x\in E,
\]
where the Fenchel-Legendre transform $F^*$ of $F$ is defined by 
\[
F^*(\ell)=\sup_{x\in E}\{\ell(x) -F(x)\},\qquad  \ell \in E'.
\]
\end{theorem}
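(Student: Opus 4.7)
\textbf{Proof plan for Theorem \ref{Fenchel-Legendre}.}
The inequality $F(x)\geq \sup_{\ell\in E'}\{\ell(x)-F^*(\ell)\}$ is immediate from the definition of $F^*$ (Fenchel--Young): for every $\ell\in E'$ and every $x\in E$ one has $\ell(x)-F(x)\leq F^*(\ell)$, hence $\ell(x)-F^*(\ell)\leq F(x)$. The task is therefore to prove the reverse inequality
\[
F(x_0)\leq \sup_{\ell\in E'}\{\ell(x_0)-F^*(\ell)\}
\]
for every $x_0\in E$. I would argue by contradiction: suppose there exists $x_0\in E$ and $\alpha\in\R$ with $\sup_{\ell}\{\ell(x_0)-F^*(\ell)\}<\alpha<F(x_0)$ (in particular $\alpha\in\R$, which is fine even when $F(x_0)=+\infty$). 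The point $(x_0,\alpha)\in E\times\R$ then does not belong to the epigraph $\mathrm{epi}(F)=\{(x,t):F(x)\leq t\}$, which is closed (by lower semicontinuity of $F$) and convex (by convexity of $F$).

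I would then invoke the Hahn--Banach separation theorem in the locally convex Hausdorff space $E\times\R$: there exist a continuous linear functional $\phi(x,t)=\ell(x)+\beta t$ with $(\ell,\beta)\in E'\times\R$ and a constant $\gamma\in\R$ such that $\ell(x)+\beta t\leq\gamma$ for all $(x,t)\in\mathrm{epi}(F)$ while $\ell(x_0)+\beta\alpha>\gamma$. Letting $t\to+\infty$ along the epigraph forces $\beta\leq 0$. If $\beta<0$, normalize so that $\beta=-1$: the condition $\ell(x)-t\leq\gamma$ for all $(x,t)\in\mathrm{epi}(F)$ rewrites as $\ell(x)-F(x)\leq\gamma$ for all $x\in E$, so $F^*(\ell)\leq\gamma$; the second inequality then yields $\ell(x_0)-F^*(\ell)\geq\ell(x_0)-\gamma>\alpha$, contradicting the choice of $\alpha$.

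The delicate case, which I expect to be the main obstacle, is $\beta=0$. Then $\ell$ merely separates $x_0$ from $\mathrm{dom}(F)$: $\ell(x_0)>\gamma\geq \ell(x)$ for all $x$ with $F(x)<+\infty$. To reach a contradiction one needs $F^*$ to be proper, i.e.\ some $\ell_0\in E'$ with $F^*(\ell_0)<+\infty$. This is the standard consequence of applying the same Hahn--Banach argument at any point $x_1$ with $F(x_1)<+\infty$ and with $\alpha_1<F(x_1)$ finite (the $\beta<0$ case always applies here and produces such an $\ell_0$); so properness of $F^*$ is really the first thing to establish, reducing to the case $F\not\equiv+\infty$ (the case $F\equiv+\infty$ being trivial since then $F^*\equiv-\infty$ and the supremum is $+\infty$). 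Once $\ell_0$ is in hand, I would consider $\ell_\lambda=\ell_0+\lambda\ell$ for $\lambda>0$: since $F^*(\ell_\lambda)\leq F^*(\ell_0)+\lambda\gamma$, we get
\[
\ell_\lambda(x_0)-F^*(\ell_\lambda)\geq \ell_0(x_0)-F^*(\ell_0)+\lambda(\ell(x_0)-\gamma),
\]
which tends to $+\infty$ as $\lambda\to+\infty$ because $\ell(x_0)-\gamma>0$, contradicting $\sup_\ell\{\ell(x_0)-F^*(\ell)\}<\alpha<+\infty$. This completes both cases and hence the theorem. The only genuinely technical points are the verification that the Hahn--Banach separation can be carried out in the product topology of $E\times\R$ (a closed convex set and a point outside it, in a locally convex Hausdorff space) and the bootstrapping argument that $F^*$ is proper; beyond that, the rest is essentially bookkeeping.
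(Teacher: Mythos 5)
Your argument is correct, and it is worth noting that the paper does not prove this statement at all: it is quoted as a known result with a citation to Z\u{a}linescu's book \cite{Zal02}, so there is no internal proof to compare against. What you have written is the classical epigraph-separation proof of the Fenchel--Moreau biconjugation theorem, and all the essential points are in place: the trivial inequality via Fenchel--Young; the reduction of the reverse inequality to separating the point $(x_0,\alpha)\notin\mathrm{epi}(F)$ from the closed convex epigraph in $E\times\R$ (whose dual is indeed $E'\times\R$, and where strong separation of a compact singleton from a disjoint closed convex set is available in a locally convex space); the dichotomy $\beta<0$ versus $\beta=0$; and the correct handling of the degenerate cases, namely $F\equiv+\infty$ treated directly and the vertical-hyperplane case $\beta=0$ resolved by first establishing properness of $F^*$ and then tilting with $\ell_\lambda=\ell_0+\lambda\ell$. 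Two small points you glossed over but which are one-liners: when you run the separation at a point $x_1\in\mathrm{dom}(F)$ to produce $\ell_0$, the reason the case $\beta=0$ cannot occur there is that it would give $\ell(x_1)\leq\gamma<\ell(x_1)$, a contradiction precisely because $x_1$ lies in the domain; and in the $\beta<0$ case the bound $F^*(\ell)\leq\gamma$ uses that $\ell(x)-F(x)=-\infty$ off the domain, so the supremum defining $F^*$ really is controlled by the epigraph inequality. With those remarks your plan compiles into a complete and standard proof, essentially the one found in \cite{Zal02}.
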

We are now ready to give the alternative proof of Theorem \ref{thm:dualLeb}.

\smallskip
\proof[Alternative proof of Theorem \ref{thm:dualLeb}.]
Let $f \in \mathcal{F}(\R^n)$ be such that $\int e^{-f^*}\,dx>0$ and denote by  $\Omega$ the interior of $\mathrm{dom}(f)$ (possibly empty).

If $0$ does not belong to $\Omega$, then according to Lemma \ref{lem:elem}, $\int e^{-f^*}\,dx = +\infty$. Applying Proposition \ref{prop:dualtriv} gives the announced equality.

Now let us assume that $0 \in \Omega$. Since $f$ is convex, $f$ is continuous on $\Omega$ and so $f_{\vert \Omega} \in \mathcal{C}(\Omega).$ Moreover, since $f$ is lower semicontinuous, it holds $c_{\Omega}(f_{\vert \Omega}) = f^*$ (the values of $f$ on the boundary of $\mathrm{dom}(f)$ are fully determined by the values of $f$ on $\Omega$). So applying, Theorem \ref{Fenchel-Legendre} to $\Lambda_\Omega$ (and $E = \mathcal{C}(\Omega)$) yields to
\[
- \log \int e^{-f^*}\,dx = \sup_{\mu} \left\{\int f\,d\mu - \Lambda_{\Omega}^*(\mu)\right\},
\]
where the supremum runs over the set of all finite signed measures $\mu$ with a compact support in $\Omega$, and 
\[
\Lambda_{\Omega}^*(\mu) = \sup_{\varphi \in \mathcal{C}(\Omega)} \left\{\int \varphi \,d\mu + \log \int e^{-c_{\Omega}(\varphi)}\,dx\right\}.
\]

We claim that $\Lambda_{\Omega}^*(\mu)=+\infty$ if $\mu$ is not of the form $\mu = - \nu$ with $\nu$ a probability measure. Indeed, let $\mu = \mu^+ - \mu^-$ be the Hahn decomposition of $\mu$ as a difference of finite positive measures, and assume that $\mu^+(\Omega)>0$. Then there is at least one compactly supported function $\psi_o : \Omega \to \R_+$ such that $\int \psi_o \,d\mu^+ >0$. By construction of $\mu^+$, it holds $\int \psi_o\,d\mu^+ = \sup \{\int \varphi \,d\mu : 0\leq \varphi\leq \psi_o\}$, so we conclude that there exists at least one compactly supported function $\varphi_o:\Omega \to \R_+$ such that $\int \varphi_o\,d\mu>0$. For all $t>0$, choosing $\varphi(x) = t\varphi_o(x) + |x|$, $x\in \Omega$, as test function yields to
\begin{align*}
\Lambda_{\Omega}^*(\mu)  &\geq \int t\varphi_o(x)+|x|\,\mu(dx) + \log \int e^{-c_{\Omega}(t\varphi_o+|\,\cdot\,|)}\,dx\\
& \geq  \int t\varphi_o(x)+|x|\,\mu(dx) + \log \int e^{-c_{\Omega}(|\,\cdot\,|)}\,dx,
\end{align*}
where the second inequality comes from the monotonicity property of $c_\Omega$ : $h\leq g \Rightarrow c_\Omega(h)\geq c_{\Omega}(g)$. It is easily checked that $\int e^{-c_{\Omega}(|\,\cdot\,|)}\,dx \neq 0$ and so, letting $t\to \infty$, gives that $\Lambda_{\Omega}^*(\mu) =+\infty.$ Finally, replacing $\varphi$ by $\varphi + u$, $u\in \R$, in the definition of $\Lambda_{\Omega}^*(\mu)$, and using that $c_{\Omega}(\varphi+u) = c_{\Omega}(\varphi)- u$, one gets
\[
\Lambda_{\Omega}^*(\mu) = \sup_{\varphi \in \mathcal{C}(\Omega)} \sup_{u\in \R} \left\{\int \varphi \,d\mu + \log \int e^{-c_{\Omega}(\varphi)}\,dx + u(\mu(\Omega)+1)\right\},
\]
which shows that $\Lambda_{\Omega}^*(\mu)=+\infty$ if $\mu(\Omega) \neq -1$.

Finally, let us fix some probability measure $\nu$ having a compact support in $\Omega$ and let us show that $\Lambda_{\Omega}^*(-\nu)=K(\nu|\Leb)$.
Suppose that $\varphi \in \mathcal{F}_{\mathrm{Lip}}(\R^n)$, then $c_{\Omega} (\varphi) \leq \varphi^*$ and so 
\[
\Lambda_{\Omega}^*(-\nu) \geq \sup_{\varphi \in \mathcal{F}_{\mathrm{Lip}}(\R^n)} \left\{ \int -\varphi \,d\nu + \log \int e^{-\varphi^*}\,dx\right\} = K(\nu|\Leb).
\]
Let us show the converse inequality. 
Let $g \in \mathcal{C}(\Omega)$ and let $K$ denote the convex hull of the support of $\nu$. Consider the function $h= g+\chi_K$. Since $K\subset \Omega$, it holds
\[
h^*(y) = c_K(g)(y):= \sup_{x\in K}\{x\cdot y - g(x)\} \leq c_{\Omega}(g)(y),\qquad \forall y\in \R^n.
\] 
Consider the function $\varphi:\R^n\to \R\cup \{+\infty\}$ defined by $\varphi = h^{**}$.  The function $\varphi$ belongs to the class $\mathcal{F}(\R^n)$ and is such that $\varphi \leq h$ (it is actually the convex enveloppe of $h$, that is to say the greatest convex function below $h$). In particular $\varphi \in L^1(\nu)$ and it holds
\begin{align*}
\int -g\,d\nu + \log \int e^{-c_\Omega(g)}\,dx &\leq \int -h\,d\nu + \log \int e^{-c_K(g)}\,dx\\
  &\leq \int -\varphi\,d\nu + \log \int e^{-c_K(g)}\,dx\\&=  \int -\varphi\,d\nu + \log \int e^{-\varphi^*}\,dx,
\end{align*}
where the last equality comes from the fact that $\varphi^*=h^{***}= h^* = c_K(g)$.  We conclude from this that $\Lambda_{\Omega}^*(-\nu) \leq K(\nu|\Leb)$, which completes the proof. 
\endproof

\noindent \textbf{Acknowledgments:} The author thanks Max Fathi and Matthieu Fradelizi for useful discussions and valuable comments during the preparation of this work. He also wants to thank the two anonymous referees for their careful reading and their suggestions that improved the quality of this work. The author is supported by a grant of the Simone and Cino Del Duca Foundation. This research has been conducted within the FP2M federation (CNRS FR 2036)

\providecommand{\bysame}{\leavevmode\hbox to3em{\hrulefill}\thinspace}
\providecommand{\href}[2]{#2}

\end{document}